\documentclass[11pt]{scrartcl}
%
%
\usepackage{algorithm}
\usepackage{dsfont}
\usepackage{amssymb,amsthm}
\usepackage{mathtools}
\usepackage{enumitem}
\usepackage{lmodern}
\usepackage{todonotes}
\usepackage{tikz}
\usepackage[nomessages]{fp}
\usepackage{hyperref}
\usepackage{cleveref}

\newtheorem{theorem}{Theorem}
\newtheorem{remark}[theorem]{Remark}
\newtheorem{proposition}[theorem]{Proposition}
\newtheorem{definition}[theorem]{Definition}
\newtheorem{corollary}[theorem]{Corollary}
\newtheorem{lemma}[theorem]{Lemma}

\newcommand{\regparam}{{\frac{1}{\delta}}}
\newcommand{\domain}{\Omega}

\newcommand{\image}{I}
\newcommand{\bigO}{\mathcal{O}}

\newcommand{\N}{\mathbb{N}}

\newcommand{\R}{\mathbb{R}}

\newcommand{\Id}{\mathrm{Id}}
\newcommand{\onesymbol}{\mathds{1}}

\newcommand{\energyDensity}{\mathrm{W}}
\newcommand{\velocity}{v}
\newcommand{\velocityConstant}{w}
\newcommand{\velocityPullback}{v}

\DeclareMathOperator{\tr}{tr} 

\newcommand{\sol}{\mathcal{C}}
\newcommand{\dx}{\,\mathrm{d}}
\newcommand{\dist}{\mathrm{d}}
\renewcommand{\div}{\operatorname{div}}
\newcommand{\Hadamard}{\mathcal{H}}
\newcommand{\manifold}{\mathcal{M}}
\newcommand{\metric}{g}
\newcommand{\setHoelder}{A_{\alpha,L,w}}

\def\XXint#1#2#3{{\setbox0=\hbox{$#1{#2#3}{\int}$}
\vcenter{\hbox{$#2#3$}}\kern-.5\wd0}}

\title{Convergence of the Time Discrete Metamorphosis Model on Hadamard Manifolds}

\author{
Alexander Effland\thanks{Institute of Computer Graphics and Vision, Graz University of Technology (alexander.effland@icg.tugraz.at)}
\and Sebastian Neumayer\thanks{TU Kaiserslautern (neumayer@mathematik.uni-kl.de)}
\and Martin Rumpf\thanks{Institute for Numerical Simulation, University of Bonn (martin.rumpf@ins.uni-bonn.de)}}

\begin{document}

\maketitle

\begin{abstract}
	Continuous image morphing  is a classical task in image processing. 
	The metamorphosis model proposed by Trouv\'e, Younes and coworkers \cite{MiYo01,TrYo05} casts this problem in the frame of Riemannian geometry
	and geodesic paths between images. The associated metric in the space of images
	incorporates dissipation caused by a viscous flow transporting image intensities and its variations along motion paths.
	In many applications, images are maps from the image domain into a manifold (e.g.~in diffusion tensor imaging (DTI) the manifold of symmetric positive definite matrices with a suitable Riemannian metric).
	In this paper, we propose a generalized metamorphosis model for manifold-valued images, where the range space is a finite-dimensional Hadamard manifold.
	A corresponding time discrete version  was presented in \cite{NPS17} based on the general variational time discretization proposed in \cite{BeEf14}.
	Here, we prove the Mosco--convergence of the time discrete metamorphosis functional to the proposed manifold-valued metamorphosis model,
	which implies the convergence of time discrete geodesic paths to a geodesic path in the (time continuous) metamorphosis model.
	In particular, the existence of geodesic paths is established. In fact, images as maps into Hadamard manifold are not only 
	relevant in applications, but it is also shown that the joint convexity of the distance function -- which characterizes Hadamard manifolds --
	is a crucial ingredient to establish existence of the metamorphosis model.
\end{abstract}

\section{Introduction}\label{sec:introduction}
Image morphing amounts to computing a visually appealing transition of two images
such that image features in the reference image are mapped to corresponding image features in the
target image whenever possible.

A particular model for image morphing known as image metamorphosis
was proposed by Miller, Trouv\'e, and Younes~\cite{MiYo01,TrYo05,TrYo05a}. It is based
on the flow of diffeomorphism model and the large deformation diffeomorphic metric mapping (LDDMM),
which dates back to the work of Arnold, Dupuis, Grenander and coworkers
\cite{Ar66a,ArKh98,DuGrMi98,BeMiTr05,JoMi00,MiTrYo02,VS09,VRRC12}.
From the perspective of the flow of diffeomorphism model,
each point of the reference image is transported to the target image in
an energetically optimal way such that the image intensity is preserved
along the trajectories of the pixels.
The metamorphosis model additionally allows for image intensity modulations along the trajectories
by incorporating the magnitude of these modulations, which is reflected
by the integrated squared material derivative of the image trajectories
as a penalization term in the energy functional.
Recently, the metamorphosis model has been extended to images in reproducing kernel Hilbert spaces~\cite{RY16},
to functional shapes~\cite{CCT16}, and to discrete measures~\cite{RY13}.
For a more detailed exposition of these models we refer the reader to \cite{Younes2010,MTY15} and the references therein.

A variational time discretization of the metamorphosis model for square-integrable
images $L^2(\domain,\R^m)$ was proposed in \cite{BeEf14}. Furthermore, 
existence of discrete geodesic paths and the Mosco--convergence of the time discrete to the time continuous metamorphosis model was proven.
The time discrete metamorphosis model has successfully been applied to a variety of imaging applications
like image extrapolation~\cite{EfRu18}, B\'{e}zier interpolation~\cite{EfRu15}, color transfer~\cite{PePi17} or image interpolation in a medical context~\cite{BeBu17}.

Throughout the past years, manifold-valued images have received increased attention (see e.g.~\cite{BaBeStWe16,CiHiSc18,LeStKoCr13,DeStWe14,BeLa17}).
Some prominent applications are linked to Hadamard manifold-valued images:
\begin{itemize}[label=--]
	\item
	Diffusion tensor magnetic resonance imaging is an image acquisition method
	that incorporates in vivo magnetic resonance images of biological tissues driven by local molecular diffusion.
	The range space of the resulting images is frequently the space of symmetric and positive definite matrices \cite{BaMa94,ChTs04,FeJo07,VaBe13}.
	\item
	Retina data is commonly modeled as images with values in the manifold of univariate non-degenerate Gaussian probability distributions endowed with the Fisher metric~\cite{JeVe14,BePe16}.
	This space is isometric to a hyperbolic space, which can be exploited numerically.
\end{itemize}
This motivates a generalization of the metamorphosis model as a Riemannian model for spaces of images.
In \cite{NPS17}, the time discrete metamorphosis model was extended to the set of image~$L^2(\domain,\Hadamard)$,
where $\Hadamard$ denotes a finite-dimensional Hadamard manifold.
Recall that Hadamard manifolds are Hadamard spaces with a special Riemannian structure having non-positive sectional curvature (for details see below).
In \cite{Bac14}, it is revealed that many concepts of Banach spaces can be generalized to Hadamard spaces,
which are therefore a proper choice for the analytical treatment of algorithms for manifold-valued images.
In particular, the distance in Hadamard spaces is jointly convex, which implies weak lower semi-continuity of certain functionals involving the distance function.
Moreover, several analytic properties of Hadamard manifolds presented in~\cref{sec:preliminaries}, which are crucial for the Mosco--convergence, cease to be valid for general manifolds.

In this paper, we prove the Mosco--convergence of the manifold-valued time discrete metamorphosis energy functional originally proposed in~\cite{NPS17}
to a novel (time continuous) metamorphosis energy functional on Hadamard manifolds.
Moreover, we establish the convergence of the manifold-valued time discrete geodesic paths to geodesic paths in the
proposed manifold-valued metamorphosis model, which coincides with the original metamorphosis energy functional in the Euclidean space. 
The proof of the Mosco--convergence in \cite{BeEf14} incorporates as an essential ingredient a representation formula for images via integration of the weak material derivative 
along motion paths for the time continuous metamorphosis model in the Euclidean setting.
Here, we no longer make use of such a representation formula.
Indeed, our Mosco--convergence result can thus be considered as a stronger result even in the case of images as pointwise maps into a Euclidean space.

\medskip

\paragraph{Outline} 
The manuscript is organized as follows.
We start with a collection of required notation and symbols in the next paragraph, including the definition of Mosco--convergence.
In \cref{sec:preliminaries}, we discuss the concept of Hadamard spaces and manifolds with an emphasis on important properties of the distance map.
Furthermore, we review the classical flow of diffeomorphism and the metamorphosis model.
Here, we already prove some continuity results on the Lagrange maps associated with a motion field.
Finally, we pick up the time discrete metamorphosis model presented in~\cite{NPS17}.
\Cref{sec:manifoldMetamorphosis} is devoted to the presentation of the manifold-valued metamorphosis model.
Here, the key point is the suitable definition of a material derivative quantity, which is finally obtained using a variational inequality.
We show that the new model for manifold-valued image maps coincides with the previous model in the Euclidean case.
\Cref{sec:Extension} introduces a method to extend time-discrete image paths to time-continuous paths as the natural prerequisite to prove the convergence of the energy functionals on discrete paths to a limit energy functional on continuous paths.  
Then, in \cref{sec:Mosco}, the main result of this paper on Mosco--convergence is stated and proved.
In detail, we show the required liminf-inequality in \Cref{thm:liminf} and the existence of recovery sequences in \Cref{thm:limsup}. 
This finally implies the convergence of discrete geodesic paths in \Cref{thm:convergence}
and the existence of a geodesic path for the time continuous metamorphosis model.
The proofs generally follow the guideline from \cite{BeEf14} for the classical metamorphosis model 
with conceptual and technical modifications in order to deal with the setup of manifold-valued images.

\medskip

\paragraph{Notation} 
Throughout this paper, we assume that the image domain~$\domain\subset\R^n$ is bounded with Lipschitz boundary.
Henceforth, we denote time continuous operators by calligraphic letters
and time discrete operators by normal letters.
We denote the space of continuous functions and $k$-times continuously differentiable functions on the image domain $\domain$ by~$C^0(\overline\domain)$ and $C^k(\overline\domain)$, respectively.
H\"older spaces of order~$k$ with exponent~$\alpha$ are denoted by~$C^{k,\alpha}(\overline\domain)$.

Furthermore, we use standard notation for Lebesgue and Sobolev spaces, i.e.~$L^p(\domain)$ and $H^m(\domain)=W^{m,2}(\domain)$.
The associated norms are denoted by $\|\cdot\|_{L^p(\domain)}$ and $\|\cdot\|_{H^m(\domain)}$, respectively, and the seminorm in $H^m(\domain)$ is given by $|\cdot|_{H^m(\domain)}$.
The Sobolev (semi-)norm is defined as
\[
|f|_{H^m(\domain)}=\|D^m f\|_{L^2(\domain)}\,,\qquad
\|f\|_{H^m(\domain)}=\biggl(\sum_{j=0}^m|f|_{H^j(\domain)}^2\biggr)^\frac{1}{2}\,.
\]
The space $H_0^m(\domain)$ is the closure of $C_c^\infty(\domain)$ with respect to $\|\cdot\|_{H^m(\domain)}$.
Derivatives are always in the strong sense, if they exist, or in the weak sense otherwise.
The symmetric part of a matrix~$A\in\R^{l,l}$ is denoted by $A^\mathrm{sym}$, i.e.~$A^\mathrm{sym}=\frac{1}{2}(A+A^\top)$.
We denote by $GL^+(n)$ the elements of $GL(n)$ with positive determinant, by $\onesymbol$ the identity matrix, and by $\Id$ the identity map.

\paragraph{Mosco--convergence}
We conclude this section with a review of Mosco--convergence, which can be seen as a generalization
of $\Gamma$--convergence. For further details we refer the reader to \cite{dalMaso93,Mosco69}.
\begin{definition}[Mosco--convergence]\label{def:MoscoConv}
	Let $(X,d)$ be a metric space and let $\{J_k\}_{k \in \N}$ and $J$
	be functionals mapping from $X$ to $\overline \R$.
	Then the sequence $J_k$ is said to converge to $J$ in the sense of Mosco
	w.r.t.~the topology induced by $d$ if
	\begin{enumerate}
		\item \label{MoscoItem1}
		For every sequence $\{x_k\}_{k \in \N} \subset X$ with $x_k \rightharpoonup x\in X$ it holds that
		\begin{equation}\label{eq:Mosco1}\tag{liminf-inequality}
		J(x) \leq \liminf_{k \to \infty} J_k(x_k)\,.
		\end{equation}
		\item For every $x \in X$ there exists a recovery sequence $\{x_k\}_{k \in \N} \subset X$ such that $x_k \to x\in X$ and
		\begin{equation}\label{eq:Mosco2}\tag{limsup-inequality}
		J(x) \geq \limsup_{k \to \infty} J_k(x_k)\,.
		\end{equation}
	\end{enumerate}
	If in \ref{MoscoItem1}.~the strong convergence of $x_k$ to $x$ in the topology induced by $d$ is
	required, then $J_k$ is said to $\Gamma$-converge to $J$ w.r.t.~the topology induced by $d$.
\end{definition}

This paper is organized as follows:
In \cref{sec:preliminaries}, we briefly recall some preliminaries of Hadamard manifolds
as well as the metamorphosis model in the Euclidean case and its time discretization
on Hadamard manifolds.
Then, in \cref{sec:manifoldMetamorphosis} the novel manifold-valued metamorphosis model
is introduced and the equivalence to the original metamorphosis model in the case
of Euclidean spaces is proven.
\Cref{sec:Extension} is devoted to the temporal extension of all relevant quantities
as required for the convergence proof.
Finally, \cref{sec:Mosco} contains the precise statement of Mosco--convergence in the manifold-valued case.

\section{Review and preliminaries}\label{sec:preliminaries}
In this section, we briefly present some preliminaries of Hadamard manifolds, 
a short introduction to the metamorphosis model in the Euclidean setting~\cite{BeEf14},
and the manifold-valued time discrete metamorphosis model~\cite{NPS17}.

\subsection{Hadamard manifolds}\label{sub:Hadamard}
In what follows, a short introduction of Hadamard manifolds is provided and the space of
H\"older continuous functions on Hadamard manifolds is analyzed.
For further details we refer the reader to the books \cite{Bac14,BH1999,Jost97}.

\paragraph{Hadamard manifolds}

\begin{figure}[htb]
	\begin{tikzpicture}
	
	\coordinate[label=below left:$\bar p$] (ep) at (0,0);
	\coordinate[label=above left:$\bar x$] (ex) at (2,2);
	\coordinate[label=above left:$\bar r$] (er) at (4,4);
	\coordinate[label=below:$\bar y$] (ey) at (3,1);
	\coordinate[label=below:$\bar q$] (eq) at (6,2);
	\draw [thick] (ep) -- (er) -- (eq) -- (ep);
	\draw [thick] (ex) -- (ey);
	
	\fill (ep) circle [radius=2pt];
	\fill (eq) circle [radius=2pt];
	\fill (er) circle [radius=2pt];
	\fill (ex) circle [radius=2pt];
	\fill (ey) circle [radius=2pt];
	
	\node[anchor= west] at (1,-0.5) {Euclidean space $\R^2$};
	\node[anchor= west] at (1,-1.0) {$\bar x = \bar p + s (\bar r-\bar p),\; \bar y = \bar p + s (\bar q-\bar p)$};
	
	\coordinate[label=below left:$p$] (ep) at (8,0);
	\coordinate[label=above left:$x$] (ex) at (10,1.6);
	\coordinate[label=above left:$r$] (er) at (12,4);
	\coordinate[label=below:$y$] (ey) at (10.8,1.27);
	\coordinate[label=below:$q$] (eq) at (14,2);
	\draw [thick] (ep) to [bend right=10] (er);
	\draw [thick] (ep) to [bend left=10] (eq);
	\draw [thick] (er) to [bend right=30] (eq);
	\draw [thick] (ex) to [bend right=30] (ey);
	\node[anchor= west] at (10,-0.5) {Hadamard manifold};
	\node[anchor= west] at (10,-1.0) {$x = \gamma_{{p,r}}(s),\; y = \gamma_{{p,q}}(s)$};
	\fill (ep) circle [radius=2pt];
	\fill (eq) circle [radius=2pt];
	\fill (er) circle [radius=2pt];
	\fill (ex) circle [radius=2pt];
	\fill (ey) circle [radius=2pt];
	\end{tikzpicture}
	\label{fig:ComparisonTriangle}
	\caption{
		Comparison triangle in the Euclidean space~$\R^2$ and geodesic triangle on a Hadamard manifold, in which $d(x,y)\leq\Vert\bar x-\bar y\Vert$ is satisfied
		(Figure adapted from \cite[Figure~1.1]{Bac14}).}
\end{figure}
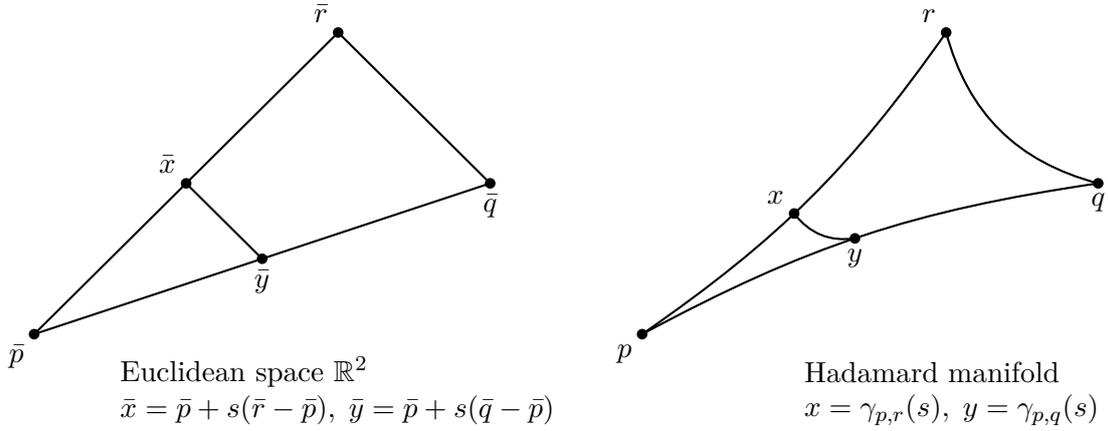

A metric space $(X,d)$ is \emph{geodesic} if every two points $x,y \in X$  
are connected by a shortest geodesic curve $\gamma_{{x,y}}\colon [0,1] \to X$, which is arclength parametrized, i.e.~for every $s,t\in[0,1]$ we have
\begin{equation}
d \bigl(\gamma_{{x,y}}(s),\gamma_{{x,y}}(t) \bigr) 
= \lvert s - t\rvert d \bigl(\gamma_{{x,y}}(0),\gamma_{{x,y}}(1) \bigr)
\label{eq:distancePropertyGeodesics}		
\end{equation}
with endpoints $\gamma_{{x,y}}(0)=x$ and $\gamma_{{x,y}}(1)=y$.
A \emph{geodesic triangle}~$\triangle(p,q,r)$ in a geodesic space~$(X,d)$
is composed of the vertices $p,q,r\in X$ and three geodesics joining these points.
The corresponding comparison triangle~$\triangle(\bar p,\bar q,\bar r)$
(which is unique up to isometries) is a triangle in the Euclidean space~$\R^2$
with vertices~$\bar p,\bar q,\bar r\in \R^2$ such that
the three line segments have the same side lengths as the corresponding geodesics of~$\triangle(p,q,r)$, i.e.
\[
d(p,q)=\Vert\bar p-\bar q\Vert\,,\quad d(p,r)=\Vert\bar p-\bar r\Vert\,,\quad d(r,q)=\Vert\bar r-\bar q\Vert\,.
\]
A complete geodesic space $(\Hadamard,d)$ is called a \emph{Hadamard space}
if for every geodesic triangle $\triangle(p,q,r)\in\Hadamard$ and 
$x\in\gamma_{p,r}$,
$y\in\gamma_{q,r}$
we have $d(x,y)\leq\Vert\bar x-\bar y\Vert$, where $\bar x$ and $\bar y$ are the corresponding points
in the comparison triangle $\triangle(\bar p, \bar q,\bar r)\in \R^2$ (see \cref{fig:ComparisonTriangle}).
Geodesic spaces satisfying the latter property are also called CAT(0) spaces.
By~\cite[Proposition~1.1.3 and Corollary~1.2.5]{Bac14} the geometric CAT(0) condition is equivalent to $(\Hadamard,d)$ being a complete geodesic space with
\begin{equation}\label{eq:reshet0}
d^2(x,v) + d^2(y,w) \le d^2(x,w) + d^2(y,v) + 2d(x,y)d(v,w)
\end{equation}
for every $x,y,v,w \in \Hadamard$.
The most prominent examples of Hadamard spaces are Hilbert spaces and \emph{Hadamard manifolds}, which are 
defined as complete simply connected Riemannian manifolds with non-positive sectional curvature.
Hyperbolic spaces and the manifold of positive definite matrices with the affine invariant metric are examples of Hadamard manifolds.
Throughout this paper, we exclusively consider finite-dimensional Hadamard manifolds, which ensure the existence of unique geodesic curves joining two arbitrary points.
Recall that the Hopf--Rinow Theorem ceases to be true for general infinite-dimensional manifolds \cite{Kl95}.

A function $f\colon \Hadamard \rightarrow \R$ is \emph{convex} if for every $x,y \in \Hadamard$ the function 
$f \circ \gamma_{{x,y}}$ is
convex, i.e.
\[
f\bigl( \gamma_{{x,y}}(t) \bigr)
\le (1-t) f\bigl( \gamma_{{x,y}} (0) \bigr)
+ t f \bigl(\gamma_{{x,y}}(1)\bigr)
\]
for all $t \in [0,1]$. 
In Hadamard spaces the distance is \emph{jointly convex} \cite[Proposition 1.1.5]{Bac14}, 
i.e.~for two geodesics $\gamma_{{x_1,x_2}},\gamma_{{y_1,y_2}}$ and $t\in[0,1]$ the relation
\begin{equation}\label{eq:ConvDist}
d\bigl(\gamma_{{x_1,x_2}}(t),\gamma_{{y_1,y_2}}(t)\bigr) \le (1-t)d(x_1,y_1)+td(x_2,y_2)
\end{equation}
holds true. Thus, geodesics are in particular uniquely determined by their endpoints.
For a bounded sequence $\{x_n\}_{n\in\N}\subset \Hadamard$, the function 
$w \colon \Hadamard \to [0, +\infty)$ defined by
\begin{equation}\label{eq:WeakConv}
w(x;\, \{x_n\}_{n\in\N}) \coloneqq \limsup_{n\to\infty} d^2(x, x_n)
\end{equation}
has a unique minimizer, which is called the \emph{asymptotic center of $\{x_n\}_{n\in\N}$}, cf.~\cite[p.~58]{Bac14}.
A sequence $\{x_n\}_{n\in\N}$ is said to \emph{converge weakly to a point $x \in \Hadamard$} if it is bounded 
and $x$ is the asymptotic center of each subsequence of $\{x_n\}_{n\in\N}$, cf.~\cite[p.~103]{Bac14}.
Then, the notion of proper and (weakly) lower semi-continuous functions is analogous to Hilbert spaces.

Next, we consider the Borel $\sigma$-algebra $\mathcal{B}$ on $\Hadamard$ on
the open and bounded set~$\Omega \subset \mathbb R^n$.
A  measurable map $f\colon\Omega\to\Hadamard$ 
belongs to $\mathcal{L}^p(\Omega,\Hadamard)$, $p \in [1,\infty]$, if 
\[
\dist_p(f,f_a) < \infty
\]
for any constant mapping $f_a(\omega)=a$ with $a\in \Hadamard$, where
$\dist_p$ is defined for two measurable maps $f$ and $g$ by
\[
\dist_p(f,g) \coloneqq 
\begin{cases}
\left(\displaystyle\int_{\Omega}d^p(f(\omega),g(\omega))\dx\omega\right)^{\frac{1}{p}}\,,\quad & p\in[1,\infty)\,,\\
\operatorname{ess\,sup}_{\omega\in\Omega}d(f(\omega),g(\omega))\,, & p  =\infty\,.
\end{cases}
\]
Using the equivalence relation $f\sim g$ if $\dist_p(f,g) = 0$,
the space $L^p(\Omega,\Hadamard)\coloneqq \mathcal{L}^p(\Omega,\Hadamard)/ \sim$ equipped with $\dist_p$ becomes a complete metric space, which
is a Hadamard space if $p=2$, cf.~\cite[Proposition 1.2.18]{Bac14}.
Finally, for $f,g$ in the \emph{weighted Bochner space} $L^2((0,1),L^2(\domain,\Hadamard),w)$ with weight
$w\in C^0([0,1] \times \domain,[c_1,c_2])$,  $0<c_1<c_2$, the metric is given by
\[
\dist_2^2(f,g)=\int_0^1\int_\domain d(f(t,x),g(t,x))^2 w(t,x)\dx x\dx t\,.
\]          
In our proposed model, we observe H\"older continuity of paths in time, which enables pointwise evaluations in time, in particular for $t=0$ and $t=1$.
Another classical property of Lebesgue spaces also transfers to the Hadamard setting.
\begin{lemma}\label{lemm:convergenceAE}
	Let $f_k \in L^2((0,1), L^2(\domain, \Hadamard),w)$ be a convergent sequence with limit $f$.
	Then there exists a subsequence which converges a.e.~in time as $k\to\infty$.
\end{lemma}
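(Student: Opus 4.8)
The plan is to adapt the classical argument that an $L^p$-convergent sequence admits an a.e.-convergent subsequence to the present manifold-valued, weighted situation. Throughout, I regard elements of $L^2((0,1),L^2(\domain,\Hadamard),w)$ as (equivalence classes of) measurable maps on $\tilde\domain\coloneqq(0,1)\times\domain$ equipped with Lebesgue measure. Since $c_1\le w\le c_2$ with $c_1>0$, convergence of $f_k$ to $f$ in the weighted Bochner space is equivalent to $\int_{\tilde\domain}d(f_k,f)^2\dx x\dx t\to0$, so it suffices to extract an a.e.-convergent subsequence with respect to the unweighted metric $\dist_2^2(g,h)\coloneqq\int_{\tilde\domain}d(g(t,x),h(t,x))^2\dx x\dx t$.

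First I would use that $\{f_k\}$ is Cauchy in $\dist_2$ to pass to a subsequence $\{f_{k_j}\}_{j\in\N}$ with $\dist_2(f_{k_{j+1}},f_{k_j})\le2^{-j}$ for every $j\in\N$. Then I would introduce the nonnegative measurable function $g\coloneqq\sum_{j=1}^{\infty}d(f_{k_{j+1}},f_{k_j})$ on $\tilde\domain$, defined pointwise with values in $[0,\infty]$. Applying Minkowski's inequality to the partial sums and then monotone convergence gives $\|g\|_{L^2(\tilde\domain)}\le\sum_{j=1}^{\infty}2^{-j}=1$, so in particular $g(t,x)<\infty$ for a.e.\ $(t,x)\in\tilde\domain$.

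Consequently, for a.e.\ $(t,x)$ the series $\sum_{j}d\bigl(f_{k_{j+1}}(t,x),f_{k_j}(t,x)\bigr)$ converges, hence $\{f_{k_j}(t,x)\}_j$ is a Cauchy sequence in the complete metric space $(\Hadamard,d)$ and thus converges to some $\hat f(t,x)\in\Hadamard$; on the exceptional null set I would set $\hat f\coloneqq f_{k_1}$. Since a finite dimensional Hadamard manifold is separable, the a.e.\ pointwise limit $\hat f$ of the measurable maps $f_{k_j}$ is again measurable, and from $d\bigl(\hat f(t,x),f_{k_1}(t,x)\bigr)=\lim_{j\to\infty}d\bigl(f_{k_j}(t,x),f_{k_1}(t,x)\bigr)\le g(t,x)$ for a.e.\ $(t,x)$ one concludes $\hat f\in L^2(\tilde\domain,\Hadamard)$, equivalently $\hat f\in L^2((0,1),L^2(\domain,\Hadamard),w)$ because $w\le c_2$.

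Finally I would identify $\hat f$ with $f$: the tail estimate $d\bigl(f_{k_j}(t,x),\hat f(t,x)\bigr)\le\sum_{i=j}^{\infty}d\bigl(f_{k_{i+1}}(t,x),f_{k_i}(t,x)\bigr)$ together with Minkowski yields $\dist_2(f_{k_j},\hat f)\le\sum_{i=j}^{\infty}2^{-i}=2^{1-j}\to0$, while $\dist_2(f_{k_j},f)\to0$ by hypothesis, so the triangle inequality forces $\dist_2(f,\hat f)=0$, i.e.\ $f=\hat f$ a.e.; hence $f_{k_j}\to f$ almost everywhere. I do not expect a genuine obstacle here; the only point requiring a little care — and the sole place where the Hadamard structure enters — is checking that the pointwise limit $\hat f$ is an admissible element of the space, which is exactly where completeness and separability of the finite dimensional Hadamard manifold, together with the $L^2$-majorant $g$, are invoked.
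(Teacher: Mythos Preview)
Your argument is correct. You give the classical self-contained proof: pass to a fast Cauchy subsequence with $\dist_2(f_{k_{j+1}},f_{k_j})\le 2^{-j}$, control the pointwise telescoping sum by an $L^2$-majorant, and invoke completeness of $(\Hadamard,d)$ to obtain an a.e.\ limit that you then identify with $f$.

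The paper proceeds differently and more tersely: from $\dist_2(f_k,f)\to0$ one applies Chebyshev's (Markov's) inequality to the nonnegative real-valued functions $d(f_k,f)$ to obtain convergence in measure on $(0,1)\times\domain$, and then cites the standard fact that convergence in measure implies a.e.\ convergence along a subsequence. This route is shorter but hides the work behind a textbook citation; your approach is more explicit, in particular it makes transparent where completeness of $\Hadamard$ and the weight bounds $c_1\le w\le c_2$ enter, and it yields the $L^2$-majorant $g$ as a byproduct (which is sometimes useful for dominated convergence arguments). Either way the manifold structure plays no role beyond completeness of the target metric space.
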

\begin{proof}
	Since the Chebyshev inequality implies the convergence in measure, we can apply \cite[Theorem 5.2.7 (i)]{Malliavin2012}.
\end{proof}
Next, we define subsets of H\"older continuous functions with fixed parameters $\alpha\in(0,1]$ and $L>0$ by
\begin{align}\label{eqSetHoelder}
\setHoelder \coloneqq \left\{f \in L^2((0,1), L^2(\domain,\Hadamard),w) \colon \dist_2(f(s),f(t)) \leq L \vert t -s \vert^{\alpha} \,\, \forall t,s \in [0,1]\right\}\,.
\end{align}
\begin{theorem}\label{thm:LipClos}
	The set $\setHoelder$ is closed and convex. In particular, $\setHoelder$ is weakly closed.
\end{theorem}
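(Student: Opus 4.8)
The plan is to verify convexity and closedness of $\setHoelder$ directly, and then obtain weak closedness from the standard fact that closed convex subsets of a Hadamard space are weakly closed.

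\emph{Convexity.} Fix $f,g\in\setHoelder$ and $\lambda\in[0,1]$ and define $h$ pointwise by $h(t,x)\coloneqq\gamma_{f(t,x),g(t,x)}(\lambda)$. This map is measurable, since by \eqref{eq:ConvDist} the assignment $(p,q)\mapsto\gamma_{p,q}(\lambda)$ is (Lipschitz) continuous on $\Hadamard\times\Hadamard$, so $h$ is a continuous function of the measurable pair $(f,g)$. From the arclength parametrization \eqref{eq:distancePropertyGeodesics} we get $d\bigl(h(t,x),f(t,x)\bigr)=\lambda\,d\bigl(f(t,x),g(t,x)\bigr)$ for a.e.\ $(t,x)$; together with the triangle inequality and $f,g\in L^2((0,1),L^2(\domain,\Hadamard),w)$ this shows $h\in L^2((0,1),L^2(\domain,\Hadamard),w)$, and moreover a short computation gives $\dist_2(f,h)=\lambda\dist_2(f,g)$ and $\dist_2(h,g)=(1-\lambda)\dist_2(f,g)$, so $h=\gamma_{f,g}(\lambda)$ is the point dividing the (unique) geodesic from $f$ to $g$ in ratio $\lambda:(1-\lambda)$. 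For the Hölder bound, apply the joint convexity \eqref{eq:ConvDist} pointwise in $x$ to obtain, for all $s,t\in[0,1]$,
\[
d\bigl(h(s,x),h(t,x)\bigr)\le(1-\lambda)\,d\bigl(f(s,x),f(t,x)\bigr)+\lambda\,d\bigl(g(s,x),g(t,x)\bigr)\,,
\]
and take the $L^2(\domain)$-norm in $x$; Minkowski's inequality and $f,g\in\setHoelder$ then yield $\dist_2(h(s),h(t))\le(1-\lambda)L|t-s|^\alpha+\lambda L|t-s|^\alpha=L|t-s|^\alpha$. Hence $h=\gamma_{f,g}(\lambda)\in\setHoelder$, and since $\lambda$ was arbitrary, $\setHoelder$ is convex.

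\emph{Closedness.} Let $\{f_k\}\subset\setHoelder$ converge to $f$ in $L^2((0,1),L^2(\domain,\Hadamard),w)$. Since $\dist_2^2(f_k,f)=\int_0^1\bigl(\int_\domain d(f_k(t,x),f(t,x))^2 w(t,x)\dx x\bigr)\dx t\to0$ and $w\ge c_1>0$, passing to a subsequence (not relabeled) we find a set $T\subset[0,1]$ of full measure with $\dist_2(f_k(t),f(t))\to0$ in $L^2(\domain,\Hadamard)$ for every $t\in T$ (alternatively one invokes \cref{lemm:convergenceAE}). For $s,t\in T$, the triangle inequality in $L^2(\domain,\Hadamard)$ together with $f_k\in\setHoelder$ gives
\[
\dist_2\bigl(f(s),f(t)\bigr)\le\dist_2\bigl(f(s),f_k(s)\bigr)+L|t-s|^\alpha+\dist_2\bigl(f_k(t),f(t)\bigr)\,,
\]
and letting $k\to\infty$ yields $\dist_2(f(s),f(t))\le L|t-s|^\alpha$ for all $s,t\in T$. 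Thus $t\mapsto f(t)$ is $\alpha$-Hölder with constant $L$ on the dense set $T$, and since $L^2(\domain,\Hadamard)$ is complete it extends uniquely to an $\alpha$-Hölder map on all of $[0,1]$ with the same constant; this extension represents the equivalence class $f$ (the two agree a.e.\ on $T$), so $f\in\setHoelder$.

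\emph{Weak closedness.} By the above, $\setHoelder$ is a closed convex subset of the Hadamard space $L^2((0,1),L^2(\domain,\Hadamard),w)$. The distance $\dist_2(\Cdot,\setHoelder)$ is convex (because $\setHoelder$ is convex) and $1$-Lipschitz, hence weakly lower semicontinuous by the Hadamard-space analogue of Mazur's lemma \cite{Bac14}; therefore $x_k\rightharpoonup x$ with $x_k\in\setHoelder$ forces $\dist_2(x,\setHoelder)\le\liminf_{k\to\infty}\dist_2(x_k,\setHoelder)=0$, i.e.\ $x\in\setHoelder$. The step I expect to require the most care is the closedness argument: correctly passing to the a.e.-in-time subsequence and then upgrading the Hölder estimate from the dense time set $T$ to all of $[0,1]$, that is, identifying the Hölder continuous representative of the limit class; by contrast, convexity is an essentially pointwise application of \eqref{eq:ConvDist} and Minkowski's inequality, and weak closedness is routine once convexity and closedness are established.
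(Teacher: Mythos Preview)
Your proof is correct and follows essentially the same approach as the paper: convexity via the pointwise geodesic and the joint convexity \eqref{eq:ConvDist}, closedness by extracting an a.e.-in-time convergent subsequence and passing the H\"older bound to the limit, and weak closedness from the general Hadamard-space fact that closed convex sets are weakly closed (the paper simply cites \cite[Lemma~3.2.1]{Bac14} for this last step). The only cosmetic difference is that for closedness the paper upgrades the a.e.\ convergence to convergence at \emph{every} $t\in[0,1]$ via a Cauchy argument, whereas you establish the H\"older estimate on a full-measure set $T$ and then extend by completeness; both are valid.
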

\begin{proof}
	\emph{Closedness}:
	Let $\{f_k\}_{k \in \N}\subset \setHoelder$ be a convergent sequence with limit $f$.
	By \Cref{lemm:convergenceAE} we get an a.e.~convergent subsequence denoted with the same indices.
	Assume there exists a point $t\in[0,1]$, where this sequence does not converge.
	Then, we can choose $s \in [0,1]$ arbitrarily close to~$t$ with $\dist_2(f_k(s),f(s))\to 0$ as $k\to \infty$,
	which implies
	\[
	\dist_2(f_k(t),f_l(t))\leq 2L\vert t-s\vert^{\alpha}+\dist_2(f_k(s),f_l(s))
	\]
	for all $k,l\in\N$ sufficiently large.
	Hence, the sequence converges pointwise for every $t\in[0,1]$.
	Now, the required H\"older continuity of $f$ follows from 
	\[
	d_2(f(s),f(t))=\lim_{k\to\infty} d_2(f_k(s),f_k(t))\leq L\vert t-s\vert^{\alpha}\,.
	\]
	
	\emph{Convexity}:
	Given $f_1,f_2\in\setHoelder$ we define a family of geodesic curve $r \mapsto \gamma_{{f_1(s),f_2(s)}}(r)$ for $s\in [0,1]$.
	Then, we obtain by the joint convexity of the Hadamard metric
	\begin{align*}
	\dist_2\left(\gamma_{{f_1(s),f_2(s)}}(r),\gamma_{{f_1(t),f_2(t)}}(r)\right)
	\leq (1-r)\dist_2(f_1(s),f_1(t)) + r \dist_2(f_2(s),f_2(t)) \leq L \vert t-s \vert^{\alpha}\,,
	\end{align*}
	where we used that geodesics $\gamma_{f_1(s),f_2(s)}$ can be computed pointwise for every $s\in [0,1]$.
	Finally, the weak closedness in the Bochner space follows by \cite[Lemma~3.2.1]{Bac14}.
\end{proof}
The following lemma is exploited in the proof of Mosco--convergence.
\begin{lemma}\label{lemm:stet_norm}
	Let $(\Hadamard,d)$ be a locally compact Hadamard space.
	For fixed $p\in [1,\infty)$ let $f \in L^p(\domain,\Hadamard)$ and $\{ Y_j\}_{j\in\N}\subset C^1(\overline\domain,\overline\domain)$
	be a sequence of diffeomorphisms such that $\vert \mathrm{det} (D  Y_j) \vert^{-1} \leq C$ for all $j \in \N$,
	which converges to a diffeomorphism~$ Y$ in $(L^\infty(\domain))^n$.
	Then,
	\[
	\limsup_{j \to \infty} \dist_p(f\circ  Y_j, f \circ  Y) = 0\,.
	\]
	If in addition $ Y_j$ converges to $ Y$ in $(C^{1,\alpha}(\overline\domain))^n$,
	then $\limsup_{j \to \infty}\dist_p(f\circ ( Y_j)^{-1},f\circ Y^{-1}) = 0$.
\end{lemma}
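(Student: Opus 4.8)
The plan is to reduce everything to the pointwise statement of \Cref{lemm:stet_norm} applied at a.e.\ $x \in \domain$, so that the heart of the matter is a scalar estimate on $\Hadamard$ combined with a dominated convergence argument. First I would recall that $f \circ Y_j(x) = f(Y_j(x))$, so that
\[
\dist_p^p(f \circ Y_j, f \circ Y) = \int_\domain d^p\bigl(f(Y_j(x)), f(Y(x))\bigr) \dx x\,.
\]
Since the $Y_j$ converge to $Y$ in $(L^\infty(\domain))^n$ and $\domain$ is bounded, the images $Y_j(\overline\domain)$ all lie in a fixed bounded set. The first obstacle is that $f$ is merely $L^p$, not continuous, so pointwise convergence of the integrand is not immediate; this is exactly where local compactness of $\Hadamard$ and the density \Cref{thm:denselip} enter. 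I would first prove the claim for Lipschitz $f$: then $d(f(Y_j(x)), f(Y(x))) \le \operatorname{Lip}(f)\, |Y_j(x) - Y(x)| \le \operatorname{Lip}(f)\, \|Y_j - Y\|_{(L^\infty(\domain))^n} \to 0$ uniformly in $x$, and integrating over the bounded domain $\domain$ gives $\dist_p(f\circ Y_j, f \circ Y) \to 0$ directly.

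For general $f \in L^p(\domain,\Hadamard)$, the strategy is approximation. Fix $\varepsilon > 0$ and, by \Cref{thm:denselip}, pick a Lipschitz $g$ with $\dist_p(f,g) < \varepsilon$. The triangle inequality for $\dist_p$ gives
\[
\dist_p(f \circ Y_j, f \circ Y) \le \dist_p(f \circ Y_j, g \circ Y_j) + \dist_p(g \circ Y_j, g \circ Y) + \dist_p(g \circ Y, f \circ Y)\,.
\]
The middle term tends to zero by the Lipschitz case just established. The two outer terms are controlled by the change-of-variables / Jacobian bound: for any measurable $h_1, h_2$,
\[
\dist_p^p(h_1 \circ Y_j, h_2 \circ Y_j) = \int_\domain d^p\bigl(h_1(Y_j(x)), h_2(Y_j(x))\bigr)\dx x = \int_{Y_j(\domain)} d^p\bigl(h_1(y), h_2(y)\bigr)\, |\mathrm{det}(DY_j^{-1})(y)| \dx y \le C\, \dist_p^p(h_1, h_2)\,,
\]
using $|\mathrm{det}(DY_j)|^{-1} \le C$. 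Hence $\dist_p(f \circ Y_j, g \circ Y_j) \le C^{1/p} \dist_p(f,g) < C^{1/p}\varepsilon$ uniformly in $j$, and likewise for the $Y$ term (with the analogous bound on $Y$, which holds since $Y$ is the $C^1$ limit and the Jacobian bound passes to the limit, or one argues directly that $\dist_p(g \circ Y, f \circ Y) \le C^{1/p}\varepsilon$ by the same computation). Taking $\limsup_{j\to\infty}$ and then $\varepsilon \to 0$ yields $\limsup_j \dist_p(f \circ Y_j, f \circ Y) = 0$.

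For the second assertion, I would apply the first part with $Y_j$ replaced by $Y_j^{-1}$ and $Y$ replaced by $Y^{-1}$. The only things to check are that $Y_j^{-1} \to Y^{-1}$ in $(L^\infty(\domain))^n$ and that $|\mathrm{det}(D Y_j^{-1})|^{-1} = |\mathrm{det}(D Y_j)| $ is uniformly bounded. The convergence $Y_j^{-1} \to Y^{-1}$ in the sup norm follows from the $C^{1,\alpha}$ convergence of $Y_j$ to $Y$: the inverse function theorem gives a uniform lower bound on $|\mathrm{det}(DY_j)|$ on $\overline\domain$ (the $C^{1,\alpha}$ convergence controls $DY_j$ uniformly, and $DY$ is invertible with uniformly bounded inverse since $\overline\domain$ is compact), hence a uniform Lipschitz bound on $Y_j^{-1}$, and then $\|Y_j^{-1} - Y^{-1}\|_\infty \le \operatorname{Lip}(Y_j^{-1}) \|Y_j - Y\|_\infty \to 0$. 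The uniform bound on $|\mathrm{det}(DY_j)|$ on $\overline\domain$ likewise follows from the uniform $C^1$ bound coming from convergence in $C^{1,\alpha}$. Thus the hypotheses of the first part are met for the inverses, and the conclusion follows. The main obstacle throughout is the lack of continuity of $f$, which is overcome precisely by the Lipschitz-density theorem together with the uniform Jacobian bound that makes the approximation error stable under composition with the $Y_j$.
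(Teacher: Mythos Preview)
The paper does not supply its own proof of this lemma; it is stated with a reference to \cite[Corollary~3]{NPS17} and \cite[Lemma~2.2.2]{NPS18}. Your argument is correct and is exactly the standard route one would expect in those references: prove the claim first for Lipschitz $f$ by a direct pointwise estimate, then for general $f\in L^p(\domain,\Hadamard)$ approximate by Lipschitz functions via \Cref{thm:denselip} and control the approximation error uniformly in $j$ through the change-of-variables formula together with the Jacobian bound $|\det DY_j|^{-1}\le C$. The second assertion is correctly reduced to the first by verifying that $C^{1,\alpha}$-convergence of $Y_j$ gives $L^\infty$-convergence of $Y_j^{-1}$ and a uniform bound on $|\det DY_j|$.

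One small imprecision worth tightening: when bounding $\dist_p(g\circ Y,f\circ Y)$ you write that ``$Y$ is the $C^1$ limit,'' but in the first part only $L^\infty$-convergence of $Y_j$ to $Y$ is assumed. Your parenthetical alternative is the correct justification: since $Y$ is itself assumed to be a $C^1$ diffeomorphism of the compact set $\overline\domain$, the quantity $|\det DY|^{-1}$ is bounded by some (possibly different) constant, and the same change-of-variables estimate goes through.
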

\begin{proof}
	See \cite[Corollary~3]{NPS17} and  \cite[Lemma~2.2.2]{NPS18}.
\end{proof}
The generalization of this result to the space $L^2((0,1), L^2(\domain,\Hadamard))$ is straightforward.
\begin{corollary}\label{cor:stet_norm_2}
	Let the assumptions from \cref{lemm:stet_norm} hold true and 
	let $\{f_j\}_{j \in \N}\subset L^p(\domain,\Hadamard)$, $p \in [1,\infty)$, be a sequence which converges to $f$ in $L^p(\domain,\Hadamard)$.
	Then,
	\[
	\limsup_{j\to\infty}\dist_p(f_j\circ Y_j,f\circ Y)=0 \quad\text{and}\quad\limsup_{j\to\infty}\dist_p(f_j\circ(Y_j)^{-1},f\circ Y^{-1})=0\,.
	\]
\end{corollary}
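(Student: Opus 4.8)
The plan is to reduce the statement to the already established \Cref{lemm:stet_norm} by splitting the quantity of interest with the triangle inequality for $\dist_p$. For the first claim, I would write
\[
\dist_p(f_j\circ Y_j,f\circ Y)\le \dist_p(f_j\circ Y_j,f\circ Y_j)+\dist_p(f\circ Y_j,f\circ Y)\,,
\]
and treat the two summands separately. The second summand tends to $0$ as $j\to\infty$ by \Cref{lemm:stet_norm} applied to the fixed map $f\in L^p(\domain,\Hadamard)$ and the sequence $\{Y_j\}$. For the first summand, the change-of-variables formula together with the uniform bound $|\det(DY_j)|^{-1}\le C$ gives
\[
\dist_p^p(f_j\circ Y_j,f\circ Y_j)=\int_\domain d^p\bigl(f_j(Y_j(x)),f(Y_j(x))\bigr)\dx x
=\int_{Y_j(\domain)} d^p\bigl(f_j(y),f(y)\bigr)\,|\det(DY_j^{-1})(y)|\dx y\le C\,\dist_p^p(f_j,f)\,,
\]
which converges to $0$ by assumption. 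Hence $\limsup_{j\to\infty}\dist_p(f_j\circ Y_j,f\circ Y)=0$.

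The second claim follows the same pattern, this time using the splitting
\[
\dist_p(f_j\circ Y_j^{-1},f\circ Y^{-1})\le \dist_p(f_j\circ Y_j^{-1},f\circ Y_j^{-1})+\dist_p(f\circ Y_j^{-1},f\circ Y^{-1})\,.
\]
For the second summand I invoke the additional assumption of \Cref{lemm:stet_norm}, namely convergence of $Y_j$ to $Y$ in $(C^{1,\alpha}(\overline\domain))^n$, which yields $\dist_p(f\circ Y_j^{-1},f\circ Y^{-1})\to 0$; note that the uniform bound $|\det(DY_j)|^{-1}\le C$ transfers to a uniform bound on $|\det(DY_j^{-1})|$, so the hypotheses of \Cref{lemm:stet_norm} are indeed met for the inverse maps. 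For the first summand, the same change of variables as above — now with $Y_j^{-1}$ in place of $Y_j$ and using that $|\det(D(Y_j^{-1})^{-1})|=|\det(DY_j)|$ is uniformly bounded (by continuity of $\det DY_j$ on the compact set $\overline\domain$ together with $C^1$-convergence to the diffeomorphism $Y$) — gives $\dist_p^p(f_j\circ Y_j^{-1},f\circ Y_j^{-1})\le C'\,\dist_p^p(f_j,f)\to 0$.

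The only mildly delicate point is bookkeeping the Jacobian bounds in both directions: the hypothesis directly controls $|\det(DY_j)|^{-1}$, and one must observe that $C^1$-convergence of $Y_j$ to the diffeomorphism $Y$ on the compact closure $\overline\domain$ forces $|\det(DY_j)|$ to be bounded above as well (uniformly in $j$ for $j$ large), so that both $f_j\circ Y_j\to f\circ Y_j$ and $f_j\circ Y_j^{-1}\to f\circ Y_j^{-1}$ hold with a uniform constant. Everything else is a routine concatenation of the triangle inequality with \Cref{lemm:stet_norm}. Finally, the extension to $L^2((0,1),L^2(\domain,\Hadamard))$ is obtained verbatim by carrying an extra time integral through each estimate, exactly as for \Cref{lemm:stet_norm} itself.
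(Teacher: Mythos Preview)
Your argument is correct and is exactly the route the paper indicates: the paper simply records that the corollary follows ``using the triangle inequality'' from \Cref{lemm:stet_norm}, and your proposal spells out precisely this splitting together with the change-of-variables estimate controlling $\dist_p(f_j\circ Y_j,f\circ Y_j)$ (resp.\ $\dist_p(f_j\circ Y_j^{-1},f\circ Y_j^{-1})$) by $C\,\dist_p(f_j,f)$. Your observation that the upper Jacobian bound needed for the inverse case comes from the additional $C^{1,\alpha}$-convergence hypothesis is the right bookkeeping point.
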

\begin{proof}
	We prove the first equation only. Using the triangle inequality, it holds that
	\[\limsup_{j\to\infty}\dist_p(f_j\circ Y_j,f\circ Y) \leq \limsup_{j\to\infty}C\dist_p(f_j,f) + \limsup_{j\to\infty}\dist_p(f\circ Y_j,f\circ Y)= 0.\]
\end{proof}
Again, the result directly generalizes to $L^2((0,1), L^2(\domain,\Hadamard))$.
For a more detailed review on Bochner spaces we refer the reader to \cite{HNVW2016}.

\subsection{Metamorphosis model in Euclidean case}
In this subsection, we briefly introduce the space of images $\image\colon\domain \to \R$ with a Riemannian structure
from the perspective of the flow of diffeomorphisms model and the metamorphosis model.
For further details we refer the reader to the literature mentioned in \cref{sec:introduction}.

\paragraph{Flow of diffeomorphisms}
In the flow of diffeomorphisms model, the temporal evolution of each pixel
of the reference image along a trajectory is determined by
a \emph{family of diffeomorphisms $(Y(t))_{t\in[0,1]}\colon\overline\domain\rightarrow\R^n$}
such that the brightness is preserved.
The \emph{brightness constancy assumption}, which is equivalent to the assertion that $t\mapsto\image(t,Y(t,x))$ is constant for a.e~$x\in\domain$,
is mathematically reflected by a vanishing material derivative~$\frac{D}{\partial t}\image=\dot\image+v\cdot D\image$
along a motion path $(\image(t))_{t\in [0,1]}$ in the space of images, where $v(t)=\dot Y(t)\circ Y^{-1}(t)$ denotes the time-dependent \emph{Eulerian velocity}.
Then, we define for a specific operator~$L$ given below the metric and the path energy associated with this family of diffeomorphisms as follows
\[
g_{Y(t)}(\dot Y(t),\dot Y(t))=\int_\domain L[v(t),v(t)]\dx x\,,\qquad
\boldsymbol{\mathcal{E}}((Y(t))_{t\in [0,1]})=\int^1_0 g_{Y(t)}(\dot Y(t),\dot Y(t))\dx t\,.
\]
Throughout this paper, we consider the \emph{higher order operator}
\begin{equation}
L[v(t),v(t)]=\tfrac{\lambda}{2}(\tr\varepsilon[v])^2+\mu\tr(\varepsilon[v]^2)+\gamma|D^m v|^2\,,
\label{eq:ellipticOperator}
\end{equation}
where $\varepsilon[v]=(Dv)^\mathrm{sym}$ refers to the symmetrized part of the Jacobian
and $m>1+\frac{n}{2}$ as well as $\lambda,\mu,\gamma>0$ are fixed constants.
This particular choice of the operator~$L$ originates from fluid mechanics,
where the metric~$g_{Y(t)}$ refers to a \emph{viscous dissipation} in a multipolar fluid model
as described in \cite{NeSi91,GrRi64,GrRi64a}.

If $Y_A$ and $Y_B$ are diffeomorphisms and the energy $\boldsymbol{\mathcal{E}}$ is finite for a general 
path $(Y(t))_{t\in[0,1]}$ with 
$Y(0)=Y_A$ and $Y(1)=Y_B$, then using the $H^m(\domain)$-coerciveness of the metric $g_{Y(t)}$ (discussed in \cite{BeKr17,DuGrMi98})
the path is already a family of diffeomorphisms. 
In addition, following~\cite{DuGrMi98} an energy minimizing velocity field~$v$ exists such that $\frac{\dx}{\dx t}Y(t,\cdot)=v(t,Y(t,\cdot))$ for 
every $t\in[0,1]$. 
Furthermore, the corresponding path~$\image$ for two input images $\image_A,\image_B\in L^2(\domain)$
has the particular form $\image(t,\cdot)=\image_A\circ Y^{-1}(t,\cdot)$.

In what follows, we investigate diffeomorphisms induced by velocity fields in the space
\[
\mathcal V \coloneqq H^m(\domain, \R^n) \cap H^1_0(\domain, \R^n)\,.
\]
The following theorem relates the norm of the induced flow to the integrated norm of the associated
velocity field.
\begin{theorem}\label{thm:DiffeoVelo}
	Let $\velocity\in L^2((0,1),\mathcal V)$ be a velocity field.
	Then, there exists a global flow $Y \in C^0([0, 1], (H^m(\domain))^n)$ such that
	\begin{equation}
	\begin{array}{rcl}
	\displaystyle\frac{\dx}{\dx t}Y(t,x)&=&\velocity(t,Y(t,x))\,,\\[0.5em]
	Y(0,x)&=&x\,,		
	\end{array}
	\label{eq:contFlow}
	\end{equation}
	for all $x \in \domain$ and a.e.~$t \in[0,1]$.
	In particular, $Y(t,\cdot)$ is a diffeomorphism for all $t \in [0, 1]$. Further, for $\alpha \in [0,m-1-\frac{n}{2})$ the following estimate holds
	\begin{align}
	\Vert Y \Vert_{C^0([0,1],C^{1,\alpha}(\overline{\domain}))} + \Vert Y^{-1} \Vert_{C^0([0,1],C^{1,\alpha}(\overline{\domain}))} 
	\leq G\left(\int_0^1 \Vert v(s,\cdot)\Vert_{C^{1,\alpha}(\overline{\domain})} \dx s\right)\label{eq:Gronwall}
	\end{align}
	for a continuous function $G(x)\coloneqq C(x+1)\exp(Cx)$.
	The solution operator from $L^2((0,1),\mathcal V)$ to $C^0([0,1], (H^m(\domain))^n)$ assigning a flow $Y$
	to every velocity field~$v$ is continuous w.r.t.~the weak topology in $L^2((0,1),\mathcal V)$ and the $C^{0}([0,1]\times \overline{\domain}))$-topology for $Y$.
\end{theorem}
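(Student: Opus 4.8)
The plan is to combine classical Carath\'eodory ODE theory in Banach spaces with higher-order energy estimates and an Arzel\`a--Ascoli compactness argument. Since $m>1+\tfrac{n}{2}$, the Sobolev embedding theorem gives $H^m(\domain)\hookrightarrow C^{1,\alpha}(\overline\domain)$ for every $\alpha\in[0,m-1-\tfrac{n}{2})$, and by Cauchy--Schwarz $\int_0^1\Vert\velocity(s,\cdot)\Vert_{C^{1,\alpha}(\overline\domain)}\dx s\le C\bigl(\int_0^1\Vert\velocity(s,\cdot)\Vert_{H^m(\domain)}^2\dx s\bigr)^{1/2}<\infty$. Thus $\velocity$ is a Carath\'eodory vector field which is Lipschitz in the spatial variable with integrable-in-time Lipschitz constant, and the Carath\'eodory existence and uniqueness theorem yields, for every $x\in\overline\domain$, a unique absolutely continuous solution $t\mapsto Y(t,x)$ of \eqref{eq:contFlow}; since $\velocity(t,\cdot)\in C^{1,\alpha}(\overline\domain)^n$, the flow inherits $C^{1,\alpha}$-regularity in $x$ from the differentiable dependence on initial data. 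Because $\velocity\in H^1_0(\domain,\R^n)$, its continuous representative vanishes on $\partial\domain$, so boundary points are stationary and, by uniqueness of trajectories, no orbit can leave the compact set $\overline\domain$; in particular the flow is global on $[0,1]$, and solving \eqref{eq:contFlow} backward in time from an arbitrary time level produces a continuous inverse, which makes $Y(t,\cdot)$ a homeomorphism of $\overline\domain$ for every $t\in[0,1]$.

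For the estimate \eqref{eq:Gronwall} I would differentiate the flow equation in $x$ to obtain the linear variational system $\tfrac{\dx}{\dx t}DY(t,x)=D\velocity(t,Y(t,x))\,DY(t,x)$ with $DY(0,\cdot)=\onesymbol$, and apply Gr\"onwall's lemma to $\Vert DY(t,\cdot)\Vert_{C^0(\overline\domain)}$ (and to $Y$ itself), which gives the claim for $\alpha=0$. For $\alpha\in(0,m-1-\tfrac{n}{2})$ one controls in addition the H\"older seminorm of $DY(t,\cdot)$ by a further Gr\"onwall argument whose inhomogeneity is bounded in terms of $\int_0^1\Vert\nabla\velocity(s,\cdot)\Vert_{C^{0,\alpha}(\overline\domain)}\dx s$ --- this is exactly where that quantity enters the constant $C$. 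Running the same argument for the backward flow yields the analogous bound for $Y^{-1}$.

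The principal work is the $H^1$-regularity in time with values in $H^m(\domain)^n$, which I also expect to be the main obstacle. Since $\tfrac{\dx}{\dx t}Y(t,\cdot)=\velocity(t,\cdot)\circ Y(t,\cdot)$, it suffices to show $\Vert\velocity(t,\cdot)\circ Y(t,\cdot)\Vert_{H^m(\domain)}\le C\,\Vert\velocity(t,\cdot)\Vert_{H^m(\domain)}$ with $C$ independent of $t$, so that the right-hand side lies in $L^2((0,1))$. Expanding $D^k(\velocity\circ Y)$ for $k\le m$ by the Fa\`a di Bruno / chain rule produces the top-order term $(D\velocity\circ Y)\,D^kY$ together with lower-order products of the $(D^j\velocity\circ Y)$, $j<k$, with derivatives of $Y$ of order strictly below $k$. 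Using that $H^m(\domain)$ is a Banach algebra (as $m>\tfrac{n}{2}$), the uniform $C^{1,\alpha}$ control of $Y$ from the previous step, and Gagliardo--Nirenberg interpolation to absorb the intermediate derivatives, one reaches a differential inequality of the form $\tfrac{\dx}{\dx t}\Vert Y(t,\cdot)\Vert_{H^m(\domain)}\le C\,\Vert\velocity(t,\cdot)\Vert_{H^m(\domain)}\bigl(1+\Vert Y(t,\cdot)\Vert_{H^m(\domain)}\bigr)$, so that Gr\"onwall bounds $\sup_{t}\Vert Y(t,\cdot)\Vert_{H^m(\domain)}$; feeding this back closes the estimate and shows $Y\in H^1([0,1],H^m(\domain)^n)$. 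The delicate point here is that the top-order term already contains $D^mY$, so the Gr\"onwall loop has to be closed in the $H^m$ norm itself, and keeping every constant independent of $t$ relies crucially on the a priori $C^{1,\alpha}$ bound together with a careful bookkeeping of the algebra and interpolation inequalities.

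Finally, for the continuity of the solution operator let $\velocity_k\rightharpoonup\velocity$ weakly in $L^2((0,1),\mathcal V)$. Then $\{\velocity_k\}$ is bounded, so by the previous steps $\{Y_k\}$ is bounded in $H^1([0,1],H^m(\domain)^n)\hookrightarrow C^{0,1/2}([0,1],H^m(\domain)^n)$, hence equicontinuous in time, and the embedding $H^m(\domain)\hookrightarrow C^1(\overline\domain)$ is compact; Arzel\`a--Ascoli therefore provides a subsequence with $Y_k\to\tilde Y$ in $C^0([0,1],C^1(\overline\domain)^n)$. Passing to the limit in the integral form $Y_k(t,x)=x+\int_0^t\velocity_k(s,Y_k(s,x))\dx s$, I split $\velocity_k(s,Y_k(s,x))-\velocity(s,\tilde Y(s,x))$ into $\velocity_k(s,Y_k(s,x))-\velocity_k(s,\tilde Y(s,x))$, which is bounded by $\Vert D\velocity_k(s,\cdot)\Vert_{C^0}\,\Vert Y_k(s,\cdot)-\tilde Y(s,\cdot)\Vert_{C^0}$ with uniformly $L^1$-in-time first factor and uniformly vanishing second factor, and $\velocity_k(s,\tilde Y(s,x))-\velocity(s,\tilde Y(s,x))$, whose time integral over $[0,t]$ tends to zero because $w\mapsto\int_0^t w(s,\tilde Y(s,x))\dx s$ is, for each $t$, a bounded linear --- hence weakly continuous --- functional on $L^2((0,1),\mathcal V)$, being dominated by $\int_0^1\Vert w(s,\cdot)\Vert_{H^m(\domain)}\dx s$. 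Hence $\tilde Y$ solves \eqref{eq:contFlow}, so $\tilde Y=Y$ by uniqueness, and since every subsequence of $\{Y_k\}$ has a further subsequence converging to $Y$, the whole sequence converges to $Y$ in $L^\infty([0,1],C^0(\overline\domain)^n)$, which is the asserted continuity.
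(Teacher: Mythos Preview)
Your approach is correct and coincides with the paper's for the one part it actually proves in detail---the $C^{1,\alpha}$ upgrade of the Gr\"onwall estimate \eqref{eq:Gronwall}, obtained by first securing the $C^1$ bound and then running Gr\"onwall on the H\"older seminorm of $DY$ with an inhomogeneity controlled by $\int_0^1\Vert\nabla v(s,\cdot)\Vert_{C^{0,\alpha}}\dx s$---while for existence, the $H^1([0,1],H^m)$ regularity of $Y$, and the weak continuity of the solution operator the paper simply cites \cite[Theorems~1 and~9, Lemma~7]{TrYo05a}, so your Carath\'eodory / Moser-type composition / Arzel\`a--Ascoli arguments are a self-contained rendering of the same standard route. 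One minor bookkeeping slip: your Fa\`a di Bruno description omits the term $(D^k v\circ Y)\,(DY)^{\otimes k}$ (the case $j=k$), but this is harmless since $DY$ is uniformly bounded and $D^k v\in L^2$ after a change of variables.
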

\begin{proof}
	The existence follows from \cite[Theorem~4.4]{BrVi16} and the weak continuity from \cite[Theorem 9]{TrYo05a}.
	Although the first result is stated only for $\R^n$, it is still valid in our setting due to the existence of a linear and continuous extension operator from $H^m(\domain)$ to $H^m(\R^n)$,
	which is implied by Stein's extension theorem~\cite{Stein70}.
	
	The estimate for the first term in \cref{eq:Gronwall} follows from \cite[Lemma 7]{TrYo05a} and relies on Gr\"onwall's inequality.
	Let $i\in\{1,\dots,n\}$, $t\in[0,1]$ and $x,y\in\domain$.
	Taking into account \cite[Lemma 7]{TrYo05a} we obtain 
	$\Vert  Y\Vert _{C^0([0,1],C^1(\overline\domain))}\leq C\exp(C\int_0^1\Vert v(s,\cdot)\Vert _{C^{1,\alpha}(\overline\domain)}\dx s)$.
	Applying the triangle inequality and estimating the result using the H\"older continuity of $D v$ and $Y$ yields
	\begin{align*}
	&\quad\left\vert \partial_i Y(t,x)-\partial_i Y(t,y)\right\vert 
	\leq \int_0^t \bigl\vert D v(s, Y(s,x))\cdot\partial_i Y(s,x)-D v(s, Y(s,y))\cdot\partial_i Y(s,y)\bigr\vert \dx s\\
	&\leq
	\int_0^t \vert D v(s, Y(s,x))-D v(s, Y(s,y))\vert \ \vert \partial_i Y(s,x)\vert\\ &\hspace{35ex}+
	\vert Dv(t, Y(s,y))\vert \ \vert \partial_i Y(s,x)-\partial_i Y(s,y)\vert \dx s\\
	&\leq
	\int_0^t \Vert v(s,\cdot)\Vert _{C^{1,\alpha}(\overline\domain)}\Vert  Y(s,\cdot)\Vert _{C^1(\overline\domain)}^{1+\alpha}\vert x-y\vert ^\alpha+
	\Vert v(s,\cdot)\Vert _{C^1(\overline\domain)}\vert \partial_i Y(s,x)-\partial_i Y(s,y)\vert \dx s\\
	&\leq
	G\left(\int_0^1 \Vert v(s,\cdot)\Vert _{C^{1,\alpha}(\overline\domain)} \dx s\right)\, \vert x-y\vert ^\alpha + \int_0^t \Vert v(s,\cdot)\Vert _{C^1(\overline\domain)}\vert \partial_i Y(s,x)-\partial_i Y(s,y)\vert \dx s\,.
	\end{align*}
	By adapting the constant $C$ in the function~$G$, Gr\"onwall's inequality implies
	\begin{align*}
	\vert \partial_i Y(t,x)-\partial_i Y(t,y)\vert \leq 
	G\left(\int_0^1\Vert v(s,\cdot)\Vert _{C^{1,\alpha}(\overline\domain)}\dx s \right) \vert x-y\vert ^\alpha \,,
	\end{align*}
	and hence $G$ bounds the first term in \eqref{eq:Gronwall}. The second term is estimated similarly by noting that $ Y^{-1}(t,\cdot)$ is the flow associated with the (backward) motion field $-v(1-t,\cdot)$.
	This proof can be further generalized to $C^0([0,1],C^{k,\alpha}(\overline\domain))$-norms provided that $m$ is sufficiently large.
\end{proof}
\begin{remark}\label{rem:DiffeoVeloRem}  
	Existence results and bounds analogous to those in \cref{thm:DiffeoVelo} hold when replacing $\mathcal V$ by $C^{1,\alpha}(\overline{\domain})$  with zero boundary condition \cite[Chapter~8]{Younes2010}.
	Furthermore, the mapping $v \to Y^v$ is Lipschitz continuous in $v$, i.e.
	\[
	\Vert Y^v(t,\cdot)-Y^{\tilde v}(t,\cdot)\Vert_{C^0(\overline\domain)}\leq(1+C\exp(C))\int_{0}^{t} \Vert v(s,\cdot) - \tilde v(s,\cdot)\Vert_{C^0(\overline\domain)} \dx s\,,
	\]
	where $C=\int_{0}^{t}\Vert v(s,\cdot)\Vert_{C^1(\overline\domain)} \dx s$, cf.~\cite[(8.16)]{Younes2010}.
\end{remark}

\paragraph{Metamorphosis} 
The metamorphosis model can be regarded as a generalization of the flow of diffeomorphisms model,
in which the brightness constancy assumption is replaced by a quadratic penalization of the material derivative, which in particular allows for intensity modulations along the trajectories.
Thus, as a first attempt the metric and the path energy in the metamorphosis model associated with the family of images $(\image(t))_{t\in[0,1]}\colon\overline\domain\rightarrow\R^n$ and a penalization parameter~$\delta>0$ are defined as follows
\begin{equation}
g(\dot\image,\dot\image)=\min_{v:\overline\domain\rightarrow\R^n}\int_\domain L[v,v]+\regparam
\left(\frac{D}{\partial t}\image\right)^2\dx x\,,\qquad
\boldsymbol{\mathcal{E}}(\image)=\int_0^1g(\dot\image(t),\dot\image(t))\dx t\,.
\label{eq:firstMetamorphosis}
\end{equation}
Thus, the flow of diffeomorphisms model is formally the limiting case of the metamorphosis model for $\delta\to 0$.

However, there are two major problems related to~\eqref{eq:firstMetamorphosis}.
Clearly, in general paths in the space of images do not exhibit any smoothness properties---neither
in space nor in time. Thus, the evaluation of
the material derivative~$(\frac{D}{\partial t}\image)^2$
is not well-defined. Moreover, since different pairs of velocity fields~$v$ and
material derivatives~$\frac{D}{\partial t}\image$ can imply the same time derivative
of the image path~$\dot\image$, the restriction to equivalence classes of 
pairs~$(v,\frac{D}{\partial t}\image)$ is required, where two pairs are equivalent
if and only if they induce the same temporal change of the image path~$\dot\image$.

To tackle both problems, Trouv\'e and Younes~\cite{TrYo05a} proposed
a nonlinear geometric structure in the space of images~$L^2(\domain)\coloneqq L^2(\domain,\R)$.
In detail, for a given velocity field~$v\in L^2((0,1), \mathcal V)$ 
and an image path~$\image\in L^2((0,1),L^2(\domain))$
the material derivative is replaced by the function $Z\in L^2((0,1),L^2(\domain))$
known as the \emph{weak material derivative}, which is uniquely determined by
\[
\int_0^1\int_\domain\eta Z\dx x\dx t=-\int_0^1\int_\domain(\partial_t\eta+\div(v\eta))\image\dx x\dx t
\]
for $\eta\in C^{\infty}_c((0,1)\times\domain)$.
Moreover, for all $\image\in L^2(\domain)$ the associated \emph{tangent space $T_\image L^2(\domain)$}
is defined as $T_\image L^2(\domain)=\{\image\}\times W/N_{\image}$, where $W= \mathcal V \times L^2(\domain)$ and
\[
N_{\image}=\left\{w=(v,Z)\in W:\int_\domain Z\eta+\image\div(\eta v)\dx x=0
\ \forall\eta\in C^{\infty}_c(\domain)\right\}\,.
\]
As usual, the associated \emph{tangent bundle} is given by
$TL^2(\domain)=\bigcup_{\image\in L^2(\domain)}T_{\image}L^2(\domain)$.

Then, following Trouv\'e and Younes, a \emph{regular path in the space of images}
(denoted by $\image\in H^1([0,1],L^2(\domain))$) is a curve $\image\in C^0([0,1],L^2(\domain))$ 
such that there exists a measurable path $\gamma\colon[0,1]\rightarrow TL^2(\domain)$ with 
bounded $L^2$-norm in space and time and $\pi(\gamma)=\image$,
where $\pi(\image,\overline{(v,Z)})=\image$ refers to the projection onto the image manifold
and $(\image,\overline{(v,Z)})$ denotes the equivalence class, such that
\[
-\int_0^1\int_\domain\image\partial_t\eta\dx x\dx t=\int_0^1\int_\domain Z\eta+\image\div(\eta v)\dx x\dx t
\]
for all $\eta\in C^{\infty}_c((0,1)\times\domain)$.
In this paper, we use the Lagrange formulation of this equation.
Let $Y$ be the coordinate transform given by \eqref{eq:contFlow}, then according to~\cite{TrYo05a} the weak material derivative is equivalently determined by the following integral equation 
\[
\image(t,Y(t,\cdot))-\image(s,Y(s,\cdot)) = \displaystyle\int_t^s Z(r,Y(r,\cdot))\dx r
\]
for all $s,t\in[0,1]$. This can be considered as a Lagrangian version of the classical material derivative.
Finally, if we assume the $\mathcal V$-coercivity
of the operator~$L$, then the \emph{path energy in the metamorphosis model}
for a regular path $\image \in H^1([0,1],L^2(\domain))$ is defined as
\begin{equation}
\boldsymbol{\mathcal{E}}(\image)=\int_0^1\inf_{\overline{(v,Z)}\in T_{\image(t)} L^2(\domain)}\int_\domain L[v,v]+\regparam Z^2\dx x\dx t\,.
\label{eq:DefinitionPathenergy}
\end{equation}
The existence of energy minimizing paths in the space of images (known as \emph{geodesic curves}), i.e.~solutions of the boundary value problem
\[
\min\{\boldsymbol{\mathcal{E}}(\tilde\image):\ \tilde\image\in H^1([0,1],L^2(\domain)),\
\tilde\image(0)=\image_A,\ \tilde\image(1)=\image_B\}
\]
for fixed images $\image_A,\image_B\in L^2(\domain)$, is proven in \cite{TrYo05a}.
In addition, one can prove the existence of minimizing $\overline{(v,Z)}\in T_{\image(t)} L^2(\domain)$.


We remark that all results of this paper can be easily generalized to the space of multichannel or color images~$L^2(\domain,\R^C)$ for $C\geq 2$
color channels with minor modifications.

\subsection{Manifold-valued time discrete metamorphosis model}
Now, we pick up the time discrete metamorphosis
model for manifold-valued images, for which the Mosco--convergence is studied in this paper.
The model itself was thoroughly analyzed in \cite{NPS17} and extends the variational time discretization of the 
classical metamorphosis model proposed in \cite{BeEf14}.

Fix $\gamma,\delta,\varepsilon>0$ and $m>1+\frac{n}{2}$, and let $\Hadamard$ be any finite-dimensional Hadamard manifold.
For two manifold-valued images~$\image,\tilde\image\in L^2(\domain,\Hadamard)$
and an \emph{admissible deformation}
\[
\varphi\in
\mathcal{A}_\varepsilon=\left\{\varphi\in H^m(\domain,\domain):\det D\varphi>\varepsilon\text{ in }\domain,
\varphi=\Id\text{ on }\partial\domain\right\}\,,
\]
the \emph{time discrete energy} for pairs of images is defined as 
\[
\boldsymbol{R}(\image,\tilde\image)
=\inf_{\varphi\in\mathcal{A}_\varepsilon}\boldsymbol{R}(\image,\tilde\image,\varphi)\,,
\]
where
\begin{equation}\label{eq:PairwiseEnergy}
\boldsymbol{R}(\image,\tilde\image,\varphi)=
\int_\domain \energyDensity(D\varphi(x))+\gamma\lVert D^m\varphi(x)\rVert^2\dx x
+\regparam\dist_2^2(\image,\tilde\image\circ \varphi)
\end{equation}
for an \emph{elastic energy density~$\energyDensity$}.
Here, $\dist_2^2(\cdot, \cdot)$ replaces the squared $L^2$-norm  in the time discrete metamorphosis model.
The energy~$\boldsymbol{R}$ can be considered as a numerically feasible
approximation of the squared Riemannian distance in the underlying image space~\cite{RuWi12b}.
Throughout this paper, we assume that $\energyDensity$ satisfies the following conditions:
\begin{enumerate}[label=(W\arabic*)]
	\item\label{W1}
	$\energyDensity\in C^4(\mathrm{GL}^+(n),\R_0^+)$ is polyconvex.
	\item\label{W2}
	There exist constants $C_{\energyDensity,1},C_{\energyDensity,2},r_\energyDensity>0$ such that
	for all $A\in\mathrm{GL}^+(n)$ the following growth estimates hold true:
	\begin{align}
	\energyDensity(A)&\geq C_{\energyDensity,1}\Vert A^\mathrm{sym}-\onesymbol\Vert ^2\,,
	&\text{if }\Vert A-\onesymbol\Vert <r_\energyDensity\,,
	\label{eq:energy3}\\
	\energyDensity(A)&\geq C_{\energyDensity,2}\,,
	&\text{if }\Vert A-\onesymbol\Vert \geq r_\energyDensity\,.
	\label{eq:energy4}
	\end{align}
	\item\label{W3}
	The energy density admits the following representation at $\onesymbol$:
	\begin{align}
	\energyDensity(\onesymbol)&=0\,,\quad
	D\energyDensity(\onesymbol)=0\,,\label{eq:energy1}\\
	\frac12 D^2\energyDensity(\onesymbol)(A,A)
	&=\frac{\lambda}{2}(\tr A)^2+\mu\tr\left(\left(A^\mathrm{sym}\right)^2\right)\,.
	\label{eq:energy2}
	\end{align}
\end{enumerate}
The assumption~\ref{W1} is required for the lower semi-continuity of the energy functional.
Furthermore, \ref{W2} enforces the convergence of the optimal deformations to the identity in the limit~$K\rightarrow\infty$, where $K$ denotes the number of time steps of
our time discrete model to be defined next in \eqref{eq:d_path}.
Finally, \ref{W3} ensures the compatibility of~$\energyDensity$ with
the elliptic operator~$L$ (cf.~\eqref{eq:ellipticOperator}).
Note that \ref{W1} and \ref{W3} are identical to \cite[(W1) and (W3)]{BeEf14}.
We recall that in \cite[(W2)]{BeEf14} a growth estimate of the form
\begin{equation}
\energyDensity(A)\geq C(\det A)^{-s}-C
\label{eq:oldW2}
\end{equation}
for $s>n-1$ and a positive constant~$C$ instead of \ref{W2} is assumed.
This modification additionally requires essentially bounded images in order to ensure
that the deformations are homeomorphic. 
However, in order to use the Hadamard space of square-integrable images, we have to use \ref{W2} instead,
which in particular results in diffeomorphic deformations.

The \emph{time discrete path energy} for $K+1$ images
$\boldsymbol{\image}=(\image_0,\ldots,\image_K)\in(L^2(\domain,\Hadamard))^{K+1}$, $K\geq 2$,
is defined as the weighted sum of the discrete energies $\boldsymbol{R}$ evaluated at consecutive images, i.e.
\begin{equation}
\boldsymbol{J}_K (\boldsymbol{\image}) 
\coloneqq
\inf_{\boldsymbol{\varphi} \coloneqq (\varphi_1,\dots,\varphi_K) \in(\mathcal{A}_\varepsilon)^K}
\left\{
\boldsymbol{J}_K(\boldsymbol{\image}, \boldsymbol{\varphi}) 
\coloneqq
K\sum_{k = 1}^{K}\boldsymbol{R}(\image_{k-1},\image_k,\varphi_k) 
\right\}\,.
\label{eq:d_path}
\end{equation}
The scaling factor~$K$ in \eqref{eq:d_path} is a natural choice in this time discrete geodesic calculus.
Indeed, if we sample a continuous path~$y\colon[0,1]\to\manifold$ on a Riemannian manifold~$(\manifold,\metric)$ at~$t_{K,k}=\frac{k}{K}$
for $k=0,\ldots,K$, we obtain from Jensen's inequality
\[
\sum_{k=1}^K\dist(y(t_{K,k-1}),y(t_{K,k}))^2\leq\sum_{k=1}^K\frac{1}{K}\int_{t_{K,k-1}}^{t_{K,k}}\metric_{y(t)}(\dot y(t),\dot y(t))\dx t=\frac{1}{K}\int_0^1\metric_{y(t)}(\dot y(t),\dot y(t))\dx t\,.
\]
A more rigorous justification is given in~\cite{RuWi12b}.

For two fixed images $\image_A=\image_0,\, \image_B=\image_K\in L^2(\domain,\Hadamard)$
a $(K+1)$-tuple $\boldsymbol{\image}=(\image_0,\ldots,\image_K)\in(L^2(\domain,\Hadamard))^{K+1}$
is called a \emph{discrete geodesic curve} if 
\[
\boldsymbol{J}_K (\boldsymbol{\image})\leq
\boldsymbol{J}_K ((\image_0,\tilde\image_1,\ldots,\tilde\image_{K-1},\image_K))
\]
for all $(\tilde\image_1,\ldots,\tilde\image_{K-1})\in(L^2(\domain,\Hadamard))^{K-1}$.
The existence of discrete geodesic curves has been shown in \cite[Section~3]{NPS17} using \ref{W1} and the properties of the deformation set $(\mathcal A_\varepsilon)^K$.
Note that in general neither
the discrete geodesic curve nor the associated set of deformations is uniquely determined.
The Mosco--convergence of a temporal extension of $\boldsymbol{J}_K$
to $\boldsymbol{\mathcal{E}}$ in the Euclidean case was proven in \cite{BeEf14}.
\begin{figure}[ht]
	\centering
	\begin{tikzpicture}
	\node at (0,0) {\includegraphics[width=0.35\linewidth]{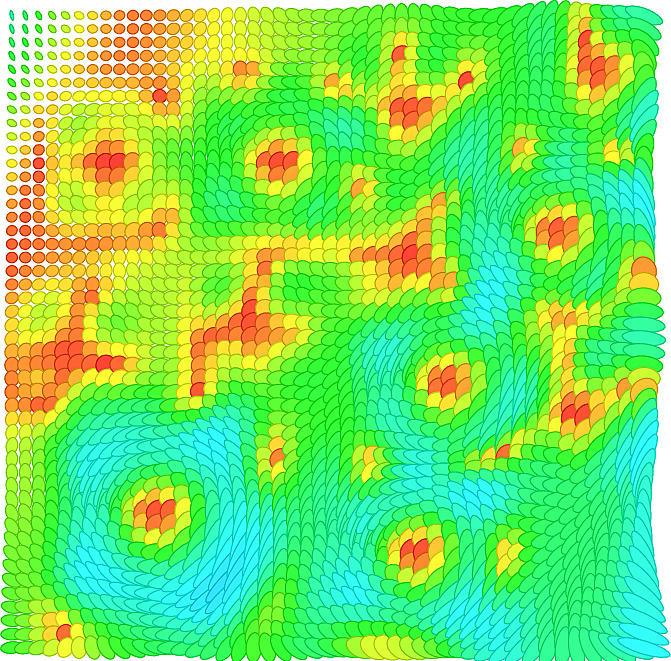}};
	\node at (8,0) {\includegraphics[width=0.35\linewidth]{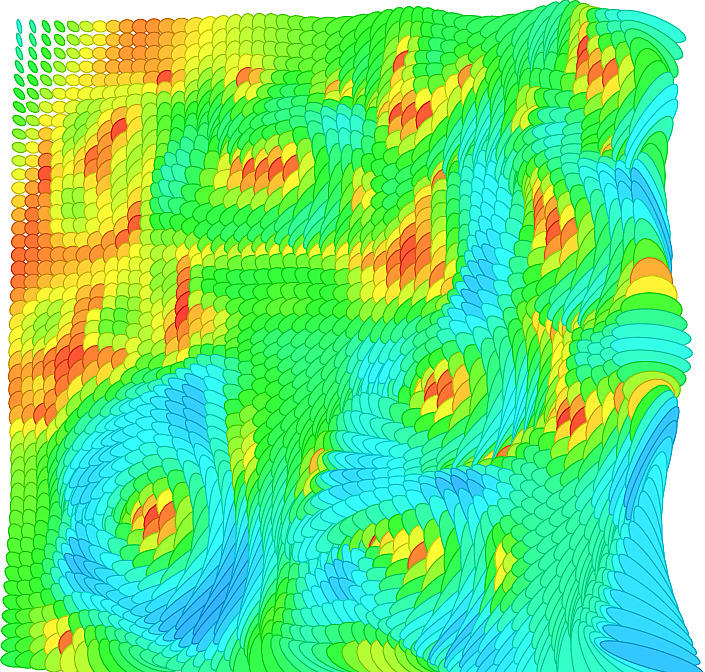}};
	\end{tikzpicture}
	\caption{The synthesized input images used in all computations, where the diffusion tensors are 
		visualized as ellipsoids color-coded with respect to the geometric anisotropy.
	}
	\label{fig:inputImages}
\end{figure}
\Cref{fig:input} shows different discrete geodesic paths for $K=4,\,8,\,16$ connecting two synthesized input images of symmetric and positive definite matrices in~$\R^2$ visualized in~\Cref{fig:inputImages}.
Here, the colors quantify the geodesic anisotropy index and the eigenvectors of the matrices correspond to the principle axes of the ellipses (for further details of the visualization we refer the reader to~\cite{MoBa06}).
For all computations, a finite difference discretization on staggered grids proposed in~\cite{NPS17} was used.
In particular, one experimentally observes an indication of convergence for increasing~$K$.
\begin{figure}[!p]
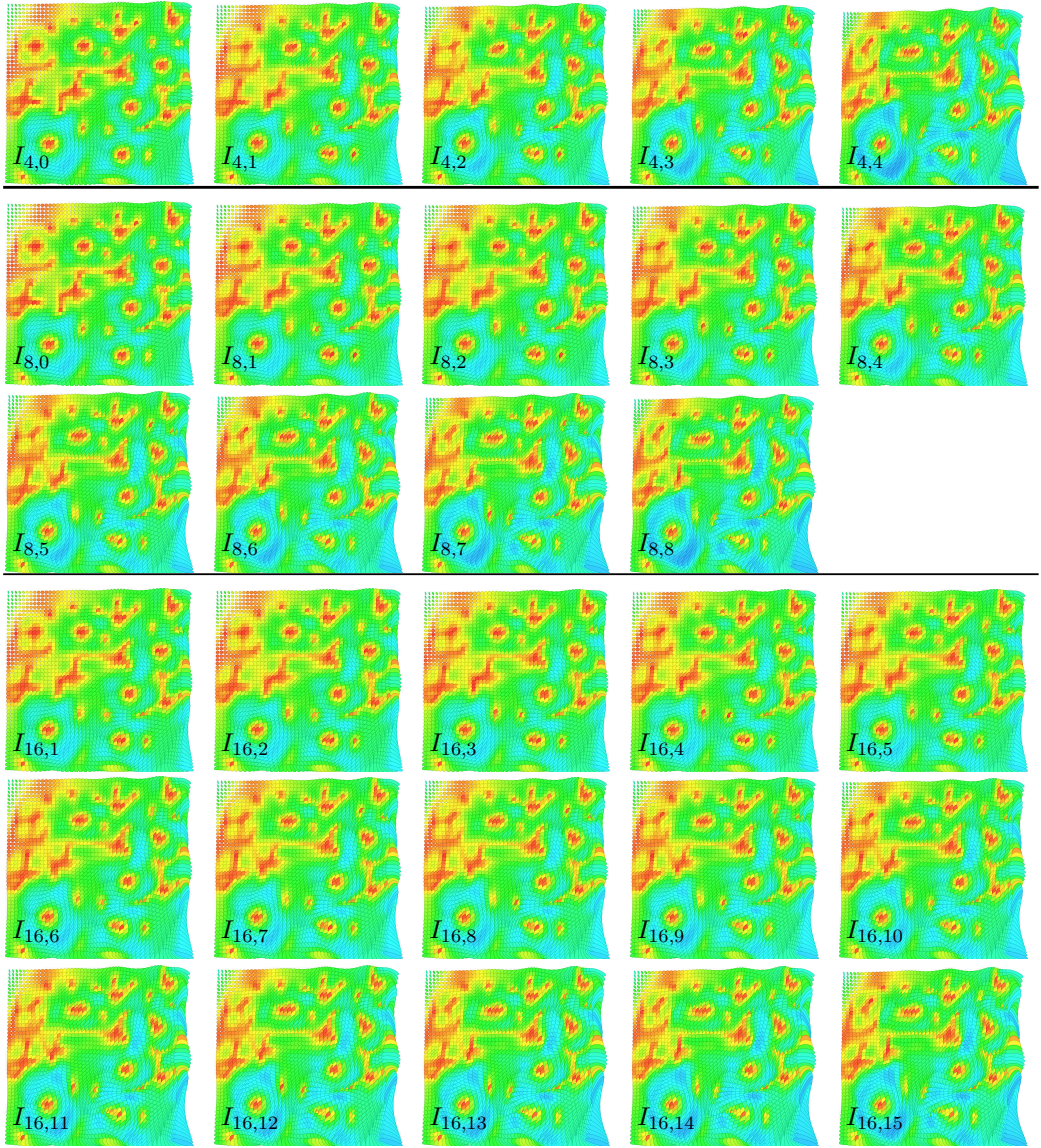

	\centering
	\resizebox{.95\linewidth}{!}{
		\begin{tikzpicture}
		
		\begin{scope}[shift={(0,0)}]
		\edef\currentCnt{0}
		\foreach \j in {1,...,5}{
			\node[anchor=south west] at (\currentCnt,0) {\includegraphics[width=0.19\linewidth]{Images/NewImages/Result_5/Morph_\j.jpg}};
			\FPeval{\jj}{clip(\j-1)}
			\node[anchor=south west] at (\currentCnt+.1,.2) {$\image_{4,\jj}$};
			\pgfmathparse{\currentCnt+3.1}
			\xdef\currentCnt{\pgfmathresult}
		}
		\end{scope}
		
		\begin{scope}[shift={(0,-3)}]
		\edef\currentCnt{0}
		\foreach \j in {1,...,5}{
			\node[anchor=south west] at (\currentCnt,0) {\includegraphics[width=0.19\linewidth]{Images/NewImages/Result_9/Morph_\j.jpg}};
			\FPeval{\jj}{clip(\j-1)}
			\node[anchor=south west] at (\currentCnt+.1,.2) {$\image_{8,\jj}$};
			\pgfmathparse{\currentCnt+3.1}
			\xdef\currentCnt{\pgfmathresult}
		}
		\end{scope}
		
		\begin{scope}[shift={(0,-5.8)}]
		\edef\currentCnt{0}
		\foreach \j in {6,...,9}{
			\node[anchor=south west] at (\currentCnt,0) {\includegraphics[width=0.19\linewidth]{Images/NewImages/Result_9/Morph_\j.jpg}};
			\FPeval{\jj}{clip(\j-1)}
			\node[anchor=south west] at (\currentCnt+.1,.2) {$\image_{8,\jj}$};
			\pgfmathparse{\currentCnt+3.1}
			\xdef\currentCnt{\pgfmathresult}
		}
		\end{scope}
		
		\begin{scope}[shift={(0,-8.8)}]
		\edef\currentCnt{0}
		\foreach \j in {2,...,6}{
			\node[anchor=south west] at (\currentCnt,0) {\includegraphics[width=0.19\linewidth]{Images/NewImages/Result_17/Morph_\j.jpg}};
			\FPeval{\jj}{clip(\j-1)}
			\node[anchor=south west] at (\currentCnt+.1,.2) {$\image_{16,\jj}$};
			\pgfmathparse{\currentCnt+3.1}
			\xdef\currentCnt{\pgfmathresult}
		}
		\end{scope}
		
		\begin{scope}[shift={(0,-11.6)}]
		\edef\currentCnt{0}
		\foreach \j in {7,...,11}{
			\node[anchor=south west] at (\currentCnt,0) {\includegraphics[width=0.19\linewidth]{Images/NewImages/Result_17/Morph_\j.jpg}};
			\FPeval{\jj}{clip(\j-1)}
			\node[anchor=south west] at (\currentCnt+.1,.2) {$\image_{16,\jj}$};
			\pgfmathparse{\currentCnt+3.1}
			\xdef\currentCnt{\pgfmathresult}
		}
		\end{scope}
		
		\begin{scope}[shift={(0,-14.4)}]
		\edef\currentCnt{0}
		\foreach \j in {12,...,16}{
			\node[anchor=south west] at (\currentCnt,0) {\includegraphics[width=0.19\linewidth]{Images/NewImages/Result_17/Morph_\j.jpg}};
			\FPeval{\jj}{clip(\j-1)}
			\node[anchor=south west] at (\currentCnt+.1,.2) {$\image_{16,\jj}$};
			\pgfmathparse{\currentCnt+3.1}
			\xdef\currentCnt{\pgfmathresult}
		}
		\end{scope}
		
		\draw [very thick] (0.1,0.1) -- (15.5,0.1);
		\draw [very thick] (0.1,-5.7) -- (15.5,-5.7);
		
		\end{tikzpicture}
	}
	\caption{Time discrete geodesic paths for $K=4,8,16$ (the input images for $K=16$ are not depicted).
		Note that the images $\image_{4,i}$, $\image_{8,2 i}$ and $\image_{16,4 i}$ reflect the increasing similarity expected for larger~$K$ in correspondence to the convergence result stated in this paper.
	}
	\label{fig:input}
\end{figure}

\section{Manifold-valued metamorphosis model}\label{sec:manifoldMetamorphosis}
In this section, we propose a (time continuous) metamorphosis energy functional~$\boldsymbol{\mathcal{J}}$
for manifold-valued images in~$L^2(\domain,\Hadamard)$, where $\Hadamard$ is a finite-dimensional Hadamard manifold.
This functional substantially differs from the straightforward generalization 
\begin{equation*}
\inf_{(v,Z)\in\sol(\image)}\int_0^1\int_\domain L[v,v]+\regparam \metric^\Hadamard_\image(Z,Z)\dx x \dx t\,.
\end{equation*}
of the classical metamorphosis functional in \eqref{eq:firstMetamorphosis}, where $\metric^\Hadamard_\image$ is the Hadamard metric at position~$\image$ on~$\Hadamard$. 
Indeed, a generalization of the weak notion of the material derivative as a tangent vector $Z(x)\in T_{\image(x)}\Hadamard$
on the Hadamard manifold via a defining equation in the context of a corresponding weak formulation is technically involved.
For a given image curve~$t\mapsto\image(t,Y(t,\cdot))$, the associated tangential vectors at different times are in general contained in different tangent spaces and compactness of the metric $g^\Hadamard_\image$ 
in the base point on the Hadamard manifold is not to be expected for sequences of paths in $L^2(\domain,\Hadamard)$. 
Hence, we propose a relaxation via an inequality relating distances between images along the motion path and an associated scalar material derivative $z$, where 
$z=\Vert Z \Vert$ in the Euclidean case of images in $L^2(\domain, \R^C)$ . 
At first, this relaxed definition of the material derivative via the variational inequality~\eqref{eq:ODESys2} avoids the above technical difficulty in the definition.
Furthermore, this relaxed formulation will turn out to be suitable for lower semi-continuity considerations
which are needed to identify this energy in~\cref{sec:Mosco} as the Mosco--limit of the above time discrete path energy and to establish existence of geodesic paths for the novel metamorphosis model.

Furthermore, we prove the equivalence of this novel energy functional with the classical metamorphosis model for $\R^C$-valued images, where
the scalar  material derivative coincides with the norm of the classical material derivative.

The \emph{manifold-valued metamorphosis
	energy functional~$\boldsymbol{\mathcal{J}}\colon L^2((0,1) \times\domain,\Hadamard)\to[0,\infty]$} is defined 
as follows
\begin{equation}\label{eq:c_path}
\boldsymbol{\mathcal{J}} (\image) 
\coloneqq\inf_{(v,z)\in\sol(\image)}\int_0^1\int_\domain L[v,v]+\regparam z^2\dx x\dx t\,.
\end{equation}
Here, $\sol(\image)$ is the set of pairs
$(v,z) \in  L^2((0, 1), \mathcal V) \times L^2((0,1), L^2(\domain))$
such that the flow $Y$ defined by 
\begin{equation} \label{eq:ODESys1}
\begin{array}{rll}
\displaystyle{\frac{\dx}{\dx t}}Y(t,x)&=\,\,v(t,Y(t,x))& \text{for } (t,x) \in [0,1]\times\domain\,,\\[.5em]
Y(0,x)&=\,\,x & \text{for } x\in \domain
\end{array}
\end{equation}
satisfies for all $t<s \in [0,1]$ the inequality
\begin{equation}\label{eq:ODESys2}
d\big(\image(t,Y(t,\cdot)),\image(s,Y(s,\cdot))\big) \leq\displaystyle{\int_t^s}z(r,Y(r,\cdot))\dx r\,.
\end{equation}
Let us verify the equivalence of this new relaxed model with the classical metamorphosis model for $\R^C$-valued images.
In the classical model, the ($C$-dimensional) material derivative $Z$ is defined via the equation
\begin{equation}\label{eq:tildez}
\image(t,Y(t,\cdot))-\image(s,Y(s,\cdot)) = \displaystyle\int_t^s Z(r,Y(r,\cdot))\dx r
\end{equation}
for all $t<s \in [0,1]$, whereas the scalar material derivative $z$ obeys the inequality 
\begin{equation}\label{eq:z}
\Vert\image(t,Y(t,\cdot))-\image(s,Y(s,\cdot))\Vert\leq\displaystyle\int_t^s z(r,Y(r,\cdot))\dx r\,.
\end{equation}
In fact, the equivalence is already implied by the following proposition, which in particular
proves that the manifold-valued metamorphosis energy~\eqref{eq:c_path} coincides with the metamorphosis energy functional~\eqref{eq:DefinitionPathenergy}
in the case of $\R^C$-valued images.
\begin{proposition}\label{prop:equivalenceMM}
	For every $z$ fulfilling \eqref{eq:z} there exists a $Z$ fulfilling \eqref{eq:tildez} with $z\geq\Vert Z\Vert$.
	Conversely, for every $Z$ fulfilling \eqref{eq:tildez} there exists a $z$ fulfilling \eqref{eq:z} with $z=\Vert Z\Vert$.
\end{proposition}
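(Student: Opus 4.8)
The plan is to compose everything with the flow $Y$ and to work in the variables $(r,x)$ rather than $(r,Y(r,x))$. Set $w(r,x)\coloneqq\image(r,Y(r,x))\in\R^C$ and, for the functions that are either given or to be constructed, $\zeta(r,x)\coloneqq\tilde z(r,Y(r,x))$ and $\sigma(r,x)\coloneqq z(r,Y(r,x))$. With these substitutions conditions \eqref{eq:tildez} and \eqref{eq:z} read, for all $t<s$ and a.e.\ $x\in\domain$,
\[
w(t,x)-w(s,x)=\int_t^s\zeta(r,x)\dx r\qquad\text{and}\qquad|w(t,x)-w(s,x)|\le\int_t^s\sigma(r,x)\dx r\,,
\]
respectively. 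By \cref{thm:DiffeoVelo} each map $Y(r,\cdot)\colon\overline\domain\to\overline\domain$ is a $C^1$ diffeomorphism with $\det DY(r,\cdot)$ bounded away from $0$ and $\infty$ uniformly in $r$, so composing a measurable (respectively $L^2$) function with $Y(r,\cdot)$ or with its inverse again yields a jointly measurable (respectively $L^2$) function; it is therefore equivalent, and technically cleaner, to construct the missing quantity directly as a function of $(r,x)$ and afterwards recover the desired $\tilde z$ or $z$ by composing with $Y^{-1}(r,\cdot)$.

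The second assertion is immediate: given $\tilde z$, hence $\zeta$, set $\sigma\coloneqq|\zeta|$; then $\sigma\in L^2((0,1)\times\domain)$, and the triangle inequality for $\R^C$-valued integrals gives $|w(t,x)-w(s,x)|=\bigl|\int_t^s\zeta(r,x)\dx r\bigr|\le\int_t^s|\zeta(r,x)|\dx r=\int_t^s\sigma(r,x)\dx r$, which is the Lagrangian form of \eqref{eq:z}; the corresponding $z$ then satisfies $z=|\tilde z|$ by construction.

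For the first assertion, observe that $\sigma\in L^2((0,1)\times\domain)$ by the change-of-variables bound, so $\sigma(\cdot,x)\in L^1(0,1)$ for a.e.\ $x$; fix such an $x$ for which in addition the hypothesis holds for all $t<s$. The estimate $\sum_i|w(s_i,x)-w(t_i,x)|\le\int_{\bigcup_i(t_i,s_i)}\sigma(r,x)\dx r$ together with the absolute continuity of the Lebesgue integral shows that $r\mapsto w(r,x)$ is absolutely continuous on $[0,1]$; set $\zeta(r,x)\coloneqq-\partial_r w(r,x)$, defined for a.e.\ $r$ and extended by $0$ on the exceptional null set. The fundamental theorem of calculus for absolutely continuous functions yields $w(t,x)-w(s,x)=\int_t^s\zeta(r,x)\dx r$, and at every $r$ that is simultaneously a differentiability point of $w(\cdot,x)$ and a Lebesgue point of $\sigma(\cdot,x)$ --- hence for a.e.\ $r$ --- the estimate $|w(r+h,x)-w(r,x)|\le\bigl|\int_r^{r+h}\sigma(\rho,x)\dx\rho\bigr|$, divided by $|h|$ and passed to the limit $h\to0$, gives $|\zeta(r,x)|\le\sigma(r,x)$. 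Since $\zeta$ is an a.e.\ pointwise limit of difference quotients of the jointly measurable map $w$, it is jointly measurable, and $|\zeta|\le\sigma\in L^2$ gives $\zeta\in L^2$; finally $\tilde z(r,\cdot)\coloneqq\zeta(r,Y^{-1}(r,\cdot))$ defines an element of $L^2((0,1),L^2(\domain,\R^C))$ that fulfills \eqref{eq:tildez} and satisfies $|\tilde z|\le z$.

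The one genuinely delicate point is the measure-theoretic bookkeeping around these one-dimensional arguments: absolute continuity and Lebesgue differentiation are applied for each fixed $x$ in a full-measure set, and one has to verify that the resulting slices $\zeta(\cdot,x)$ assemble into a jointly measurable function of $(r,x)$ of the correct integrability --- which is exactly what the difference-quotient description of $\zeta$ and the bound $|\zeta|\le\sigma$ provide --- and, relatedly, that the hypotheses may legitimately be read as holding, for a.e.\ $x$, for all $t<s$ simultaneously, which follows after discarding the null set of $x$ on which $r\mapsto z(r,Y(r,x))$ fails to be integrable.
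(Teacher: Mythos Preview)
Your proof is correct and takes a genuinely different route from the paper's own argument. For the second assertion both approaches are identical. For the harder first assertion, the paper first integrates the hypothesis in $x$ to obtain
\[
\Vert \image(t,Y(t,\cdot))-\image(s,Y(s,\cdot))\Vert_{L^2(\domain)}\leq\int_s^t\Vert z(r,Y(r,\cdot))\Vert_{L^2(\domain)}\dx r\,,
\]
so that $t\mapsto \image(t,Y(t,\cdot))$ lies in $AC^2([0,1],L^2(\domain))$ in the sense of Ambrosio--Gigli--Savar\'e, and invokes the differentiability theorem for such curves to produce $\tilde z$. The pointwise inequality $|\tilde z|\le z$ is then obtained indirectly by a contradiction argument: one approximates the bad set by finite unions of cuboids and uses that on each cuboid the metric-derivative machinery gives the $L^2$ inequality.

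You instead work slice by slice in $x$: classical one-dimensional absolute continuity of $r\mapsto w(r,x)$, Lebesgue differentiation to get $|\zeta(r,x)|\le\sigma(r,x)$ directly, and difference quotients for joint measurability. This is more elementary --- no appeal to the $AC^p$ theory for metric-space-valued curves --- and yields the pointwise bound without the cuboid-approximation detour. The price you pay is the quantifier swap you flag at the end (passing from ``for all $t<s$, a.e.\ $x$'' to ``for a.e.\ $x$, all $t<s$''); the paper's $L^2(\domain)$-level argument avoids this issue automatically. Your handling of that point is adequate: restrict first to rational $t<s$, use integrability of $\sigma(\cdot,x)$ to get continuity of the right-hand side and hence an absolutely continuous representative of $w(\cdot,x)$, and only then pass to arbitrary $t<s$.
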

\begin{proof}
	For given $Z$ the result follows from the triangle inequality by choosing $z = \Vert Z \Vert$.
	To prove the converse, let $z$ solve \eqref{eq:z}.
	Taking the $L^2$-norm on both sides implies
	\begin{align*}
	\Vert\image(t,Y(t,\cdot))-\image(s,Y(s,\cdot))\Vert_{L^2(\domain)}
	\leq \int_s^t \Vert z(r,Y(r,\cdot)) \Vert_{L^2(\domain)}\dx r\,,
	\end{align*}
	i.e.~the function~$t\mapsto\image(t,Y(t,x))$ is $AC^2([0,1],L^2(\domain))$ in the sense of \cite[Definition~1.1.1]{Ambrosio}.
	Using \cite[Remark~1.1.3]{Ambrosio} one can additionally infer the a.e.~differentiability with derivative $\widehat Z\in L^2((0,1),L^2(\domain))$ such that 
	\[
	\image(t,Y(t,x))-\image(0,Y(0,x))=\int_0^t\widehat Z(r,x)\dx r = \int_0^t Z(r,Y(r,x))\dx r
	\]
	with $Z(r,x) \coloneqq\widehat Z(r,X(r,x))$. Here, $X(r,\cdot)$ is the spatial inverse of $Y(r,\cdot)$, which exists due to \cref{thm:DiffeoVelo}.
	Now set
	\[
	B=\left\{
	(r,x)\in[0,1]\times\domain\;:\;z(r,Y(r,x)) < \Vert Z(r,Y(r,x))\Vert\right\}
	\]
	and assume that the Lebesgue measure of $B$ is strictly positive.
	Note that $B$ can be approximated with finite unions of disjoint semi-open cuboids \cite[Theorem 1.4]{SteSha05}.
	By taking into account \cite[Theorem~1.1.2/Remark~1.1.3]{Ambrosio}, for every such 
	cuboid $[t_1,t_2)\times D\subset[0,1]\times\domain$ we obtain
	\[
	\int_{t_1}^{t_2} \int_D \Vert Z(t,Y(t,x))\Vert^2 \dx x\dx t \leq \int_{t_1}^{t_2}  \int_D z(t,Y(t,x))^2 \dx  x \dx t\,.
	\]
	Combining this estimate with the dominated convergence theorem we conclude
	\[
	\int_B \Vert Z(t,Y(t,x)) \Vert^2 \dx x \dx t \leq \int_B z(t,Y(t,x))^2 \dx x \dx t\,.
	\]
	This yields a contradiction to the definition of the set~$B$. Hence, $z\geq \Vert Z \Vert$ a.e.~in $t$ and $x$.
\end{proof}

\section{Temporal extension operators}\label{sec:Extension}
In this section, temporal extensions of all relevant quantities required for the convergence proof of the time discrete metamorphosis
are proposed, which in particular allows an explicit solution to the optimality conditions~\eqref{eq:ODESys1} and ~\eqref{eq:ODESys2}.
We remark that the subsequent construction is similar to \cite{BeEf14} with two major modifications, namely
the definitions of the interpolated image sequence~\eqref{eq:DefUk} and the weak material derivative~\eqref{eq:materialDerivativeDef},
which are related to the manifold structure.

For fixed $K \in \N$, let a discrete image path $\boldsymbol{\image}_K =(\image_{K,0}, \dots, \image_{K,K}) \in L^2(\domain, \Hadamard)^{K+1}$ be given.
Existence of the corresponding optimal deformations $\boldsymbol \varphi_K = ({\varphi}_{K,1}, \dots, {\varphi}_{K,K})\in(\mathcal{A}_\varepsilon)^K$ satisfying \eqref{eq:d_path} is proven in \cite[Section~3]{NPS17}.
We refer to $\tau =  K^{-1}$ as the \emph{time step size}
and the image~$\image_{K,k}$ is associated with the \emph{time step $t_{K,k} = k\tau$}, $k=0,\dots,K$.
For $k=1,\dots,K$, we define the \emph{discrete transport map $y_{K,k} \colon [t_{K,k-1}, t_{K,k}] \times \overline{\domain} \to \overline{\domain}$} as 
\begin{equation}
y_{K,k}(t,x) \coloneqq x + (t - t_{K,k-1})K(\varphi_{K,k}(x) - x)\,.
\end{equation}
If
\begin{equation}\label{eq:AssumptionDiffeo}
\max_{k=1,\ldots,K}\Vert  \varphi_{K,k} - \Id \Vert_{C^{1,\alpha}(\overline{\domain})} < 1\,,
\end{equation}
we can use \cite[Theorem 5.5-1/Theorem 5.5-2]{Cia88}
to infer that $\det(Dy_{K,k}(t,\cdot))>0$ holds
and that $y_{K,k}(t,\cdot)$ is invertible with inverse $x_{K,k}(t,\cdot)$.
The validity of this assumption is proven below and is tacitly assumed for all further considerations.

Next, the \emph{extension operator
	$\image^{\mathrm{ext}}_K \colon L^2(\domain,\Hadamard)^{K+1}\times(\mathcal{A}_\varepsilon)^K\to L^2([0,1],L^2(\domain,\Hadamard))$},
is defined for $t\in[t_{K,k-1}, t_{K,k})$ and a.e.~$x\in\domain$ by
\begin{equation}\label{eq:DefUk}
\image^{\mathrm{ext}}_K(\boldsymbol{I}_K,\boldsymbol{\varphi}_K)(t,x)
\coloneqq \gamma_{\image_{K,k-1}(x_{K,k}(t,x)),\image_{K,k}\circ\varphi_{K,k}(x_{K,k}(t,x))}(K(t-t_{K,k-1}))\,.
\end{equation}
Here, 
$\gamma_{\image_{K,k-1}(x_{K,k}(t,x)),\image_{K,k}\circ\varphi_{K,k}(x_{K,k}(t,x))}(K(t-t_{K,k-1}))$ is a 
point on the geodesic between  $I_{K,k-1}(x_{K,k}(t,x))$ and $I_{K,k}\circ\varphi_{K,k}(x_{K,k}(t,x))$ on the manifold $\Hadamard$.
Thus, $\image^{\mathrm{ext}}_K$ uniquely describes for given $\boldsymbol{I}_K$ and $\boldsymbol{\varphi}_K$ a blending in the geodesic sense
along the transport path governed by $y_{K,k}$.

In what follows, we set $w_{K,k}=K(\varphi_{K,k}-\Id)$ and define
the \emph{piecewise constant (in time) velocity}
$\velocityConstant_K=\velocityConstant_K(\boldsymbol \varphi_K) \in L^2((0,1),\mathcal V)$ as
\[
\velocityConstant_K(\boldsymbol \varphi_K)\big\vert _{[t_{K,k-1}, t_{K,k})} \coloneqq w_{K,k}\,.
\]
Furthermore, we define the \emph{discrete velocity field}
$\velocityPullback_K\colon \mathcal V^K \to L^2((0,1), C^{1,\alpha}(\overline{\domain}))$,
\[
\velocityPullback_K(\boldsymbol \varphi_K)(t,x) \coloneqq K(\varphi_{K,k} - \Id )(x_{K,k}(t,x))
\]
for $t\in[t_{K,k-1}, t_{K,k})$ and a.e.~$x\in\domain$, which is constant along time discrete paths.

Note that the extension operator $v_K$ merely admits a $C^{1,\alpha}$-regularity. 
To see this, we note that the composition of $f\in C^{1,\alpha}(\overline{\domain})$ and $g\in C^{1,\alpha}(\overline{\domain},\overline{\domain})$
is in $C^{1,\alpha}(\overline{\domain})$ and the estimate
\[
\Vert f \circ g \Vert_{C^{1,\alpha}(\overline{\domain})} \leq \Vert f \Vert_{C^{1}(\overline{\domain})}(1 + \Vert g \Vert_{C^{1}(\overline{\domain})}) + [Df \circ g\,Dg]_{\alpha}\leq C\Vert f \Vert_{C^{1,\alpha}(\overline{\domain})}\left(1+\Vert g \Vert_{C^{1,\alpha}(\overline{\domain})}\right)^2,
\]
follows from \cite[Proposition~1.2.4 and Proposition~1.2.7]{Fio16}, where $[\cdot]_\alpha$ denotes the H\"older constant.
Taking into account \cite[Theorem 2.1]{BHS2005},
we infer that $x_{K,k}(t,\cdot) \in C^{1,\alpha}(\overline{\domain})$ and
\[
D(x_{K,k}(t,\cdot)) =  K^{-1}\mathrm{Inv} \left( K^{-1}\onesymbol + (t - t_{K,k-1})(D\varphi_{K,k} - \onesymbol) (x_{K,k}(t,\cdot))\right)\,,
\]
where $\mathrm{Inv}\colon GL(n) \to GL(n)$ denotes the smooth inversion operator.
Since $\domain$ is bounded and $x_{K,k}(t,\cdot)$ is a diffeomorphism, we get
\begin{equation}
\Vert x_{K,k}(t,\cdot) \Vert_{C^{1,\alpha}(\overline{\domain})} \leq C + \Vert Dx_{K,k}(t,\cdot) \Vert_{C^{0,\alpha}(\overline{\domain})}
\leq C\Bigl(1 +  K^{-1}\max_{k=1,\ldots,K}\Vert  \varphi_{K,k} - \Id \Vert_{C^{1,\alpha}(\overline{\domain})}\Bigl)\,,
\label{eq:BoundInverse}
\end{equation}
where the mean value theorem is applied to $x_{K,k}$.
This implies that $\velocityPullback_K(t,\cdot) \in C^{1,\alpha}(\overline{\domain})$ and
\begin{equation}
\Vert \velocityPullback_K (t,\cdot) \Vert_{C^{1,\alpha}(\overline{\domain})}\leq C\Vert\velocityConstant_K(t,\cdot)
\Vert_{C^{1,\alpha}(\overline{\domain})}\left(1+K^{-1}\Vert\velocityConstant_K(t,\cdot)\Vert_{C^{1,\alpha}(\overline{\domain})}\right)^2\,.
\label{eq:velocityEstimate}
\end{equation}

As a last preparatory step, we define the \emph{discrete path $Y_K \colon [0,1] \times \overline{\domain} \to\overline{\domain}$}, which is the concatenation of all small diffeomorphisms $y_{K,k}$ along the motion path.
In detail, the mapping is defined for $t \in [0, t_{K,1}]$ by
$Y_K(t,x) \coloneqq y_{K,1}(t,x)$
and then recursively for $k=2,\dots,K$ and $t \in (t_{K,k-1}, t_{K,k}]$ by
\[
Y_K(t,x) \coloneqq y_{K,k}\left(t,Y_K(t_{K,k-1},x)\right)
\]
for all $x \in \domain$. The spatial inverse of $Y_K$ is denoted by $X_K$.
Finally, we define the \emph{material derivative}
$z_K \in L^2((0,1), L^2(\domain))$ for $t \in [t_{K,k-1}, t_{K,k})$ as
\begin{equation}
z_K(t,x) \coloneqq K d\big(\image_{K,k-1}(x_{K,k}(t,x)),\image_{K,k}\circ \varphi_{K,k} (x_{K,k}(t,x))\big)\,.
\label{eq:materialDerivativeDef}
\end{equation}

In the following proposition, we prove that the temporal extensions of the images, the velocities, the material derivatives and
the discrete paths are indeed an admissible point for the problem, i.e.~they satisfy \eqref{eq:ODESys1} and \eqref{eq:ODESys2}.
\begin{proposition}[Admissible extension]\label{prop:admExtension}
	For $\boldsymbol{\image}_K\in L^2(\domain,\Hadamard)^{K+1}$ and deformations $\boldsymbol{\varphi}_K \in(\mathcal{A}_\varepsilon)^K$ satisfying \eqref{eq:AssumptionDiffeo},
	the tuple $(\image^{\mathrm{ext}}_K(\boldsymbol{\image}_K, \boldsymbol{\varphi}_K),\velocityPullback_K(\boldsymbol{\varphi}_K), Y_K, z_K)$ is a solution to \eqref{eq:ODESys1} and \eqref{eq:ODESys2}.
\end{proposition}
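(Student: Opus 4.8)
The plan is to verify the two conditions~\eqref{eq:ODESys1} and \eqref{eq:ODESys2} separately, treating the ODE first and the variational inequality second. For the ODE, I would observe that on each time slab $[t_{K,k-1},t_{K,k}]$ the map $y_{K,k}(t,x)=x+(t-t_{K,k-1})w_{K,k}(x)$ is affine in $t$, so its time derivative is $w_{K,k}$. The discrete path $Y_K$ is defined by concatenating these pieces, so on $(t_{K,k-1},t_{K,k}]$ one has $Y_K(t,x)=y_{K,k}(t,Y_K(t_{K,k-1},x))$ and hence $\tfrac{\dx}{\dx t}Y_K(t,x)=w_{K,k}(x_{K,k}(t,Y_K(t,x)))=\velocityPullback_K(t,Y_K(t,x))$ by the very definition of $\velocityPullback_K$ as the pullback of $w_{K,k}$ under $x_{K,k}(t,\cdot)$. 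Continuity of $Y_K$ across the breakpoints $t_{K,k}$ follows from the recursive construction, and $Y_K(0,x)=y_{K,1}(0,x)=x$. One should also record that $v_K\in L^2((0,1),\mathcal V)$ (indeed in $L^2((0,1),C^{1,\alpha})$) by~\eqref{eq:velocityEstimate}, and that $z_K\in L^2((0,1),L^2(\domain))$, so that the pair $(v_K,z_K)$ lies in the admissible class; the latter reduces to finiteness of $\sum_k \dist_2^2(\image_{K,k-1},\image_{K,k}\circ\varphi_{K,k})$, which holds because $\boldsymbol{\varphi}_K$ is energy-minimizing and the discrete path energy is finite.

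For the variational inequality~\eqref{eq:ODESys2}, the key is that $\image^{int}_K(t,Y_K(t,\cdot))$ unwinds the inverse transport $x_{K,k}(t,\cdot)$. Precisely, for $t\in[t_{K,k-1},t_{K,k})$ we have $x_{K,k}(t,Y_K(t,x))=Y_K(t_{K,k-1},x)$, since $Y_K(t,x)=y_{K,k}(t,Y_K(t_{K,k-1},x))$ and $x_{K,k}(t,\cdot)=y_{K,k}(t,\cdot)^{-1}$. Plugging this into~\eqref{eq:DefUk} gives
\[
\image^{int}_K(t,Y_K(t,x)) = \gamma_{{\image_{K,k-1},\image_{K,k}\circ\varphi_{K,k}}}\big(K(t-t_{K,k-1})\big)\big(Y_K(t_{K,k-1},x)\big)\,,
\]
i.e.~along the path $t\mapsto Y_K(t,x)$ the interpolated image is exactly the arclength-parametrized geodesic in $\Hadamard$ from $\image_{K,k-1}(Y_K(t_{K,k-1},x))$ to $\image_{K,k}(\varphi_{K,k}(Y_K(t_{K,k-1},x)))$. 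Using the geodesic distance property~\eqref{eq:distancePropertyGeodesics} pointwise in $x$, for $t<s$ both in the same slab $[t_{K,k-1},t_{K,k})$ one gets
\[
d\big(\image^{int}_K(t,Y_K(t,x)),\image^{int}_K(s,Y_K(s,x))\big) = K(s-t)\, d\big(\image_{K,k-1},\image_{K,k}\circ\varphi_{K,k}\big)(Y_K(t_{K,k-1},x))\,,
\]
and the right-hand side equals $\int_t^s z_K(r,Y_K(r,x))\dx r$ because $z_K(r,Y_K(r,x))=K\,d(\image_{K,k-1},\image_{K,k}\circ\varphi_{K,k})(x_{K,k}(r,Y_K(r,x)))=K\,d(\dots)(Y_K(t_{K,k-1},x))$ is constant in $r$ on that slab. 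This settles the inequality (with equality) when $s,t$ lie in the same subinterval.

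The main obstacle is the case where $t$ and $s$ straddle one or more breakpoints $t_{K,j}$; here~\eqref{eq:ODESys2} is genuinely an inequality rather than an equality, and the geodesics from different slabs need not concatenate to a geodesic. I would handle this by splitting $[t,s]$ at the intermediate nodes $t_{K,j_0}<\dots<t_{K,j_1}$ it contains and applying the single-slab identity together with the triangle inequality for $d$: for instance, at an interior node $t_{K,j}$ one has $\image^{int}_K(t_{K,j}^-,Y_K(t_{K,j},x))=\image_{K,j}(\varphi_{K,j}(Y_K(t_{K,j-1},x)))=\image_{K,j}(Y_K(t_{K,j},x))$ by the fixed-point relation $Y_K(t_{K,j},x)=\varphi_{K,j}(Y_K(t_{K,j-1},x))$ (which itself follows from $y_{K,j}(t_{K,j},\cdot)=\varphi_{K,j}$), and likewise $\image^{int}_K(t_{K,j}^+,Y_K(t_{K,j},x))$ starts the next geodesic at the \emph{same} point $\image_{K,j}(Y_K(t_{K,j},x))$; so the interpolated image curve along $Y_K$ is in fact continuous at the nodes, and telescoping the per-slab equalities through the triangle inequality yields~\eqref{eq:ODESys2}. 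I would also need to double-check the boundary regularity claim that $\max_k\|\varphi_{K,k}-\Id\|_{C^{1,\alpha}}<1$ so that all the $x_{K,k}(t,\cdot)$ are well-defined diffeomorphisms as used throughout; this is asserted to be proven separately below, so I would simply invoke it here, noting it follows from the growth condition~\ref{W2} forcing the optimal deformations close to the identity for $K$ large.
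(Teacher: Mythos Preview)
Your proof is correct and follows essentially the same route as the paper: verify the ODE on each slab via the pullback definition of $\velocityPullback_K$, then establish the inequality with equality on a single slab using the geodesic distance property~\eqref{eq:distancePropertyGeodesics} and the identity $x_{K,k}(t,Y_K(t,x))=Y_K(t_{K,k-1},x)$, and finally telescope across multiple slabs via the triangle inequality. One small slip: you write $v_K\in L^2((0,1),\mathcal V)$, but the paper explicitly notes that $\velocityPullback_K$ only enjoys $C^{1,\alpha}$-regularity in space (composition with $x_{K,k}$ does not preserve $H^m$-regularity for $m>1+\tfrac{n}{2}$); this does not affect the argument, since the proposition only asserts that \eqref{eq:ODESys1} and \eqref{eq:ODESys2} hold, not membership in $\sol(\image^{int}_K)$ in the strict sense.
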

\begin{proof}
	By definition, we obtain $Y_K(0,x) = x$ for all $x \in \domain$. For $t \in [t_{K,k-1}, t_{K,k}]$ and $x \in \domain$ we get
	\begin{align*}
	\frac{\dx}{\dx t}Y_K(t,x)&=\frac{\dx}{\dx t}y_{K,k}(t,Y_K(t_{K,k-1},x))=K(\varphi_{K,k}-\Id)(Y_K(t_{K,k-1},x))\\&=\velocityPullback_K(\boldsymbol{\varphi}_K)(t,Y_K(t,x))\,.
	\end{align*}
	Therefore, $Y_K$ is a solution of \eqref{eq:ODESys1} in the weak sense according to \cref{rem:DiffeoVeloRem}.
	A short computation shows for $s\leq t\in [t_{K,k-1}, t_{K,k}]$ that
	\begin{align*}
	&d\Bigl(\image^{\mathrm{ext}}_K(\boldsymbol{\image}_K,\boldsymbol{\varphi}_K)(t,Y_K(t,x)), \image^{\mathrm{ext}}_K(\boldsymbol{\image}_K, \boldsymbol{\varphi}_K)(s,Y_K(s,x))\Bigr)\\
	=& d\Bigl(\gamma_{{\image_{K,k-1},\image_{K,k}\circ \varphi_{K,k}}}(K(t-t_{K,k-1}))(Y_K(t_{K,k-1},x)),\\ &\quad \quad \quad
	\gamma_{{\image_{K,k-1},\image_{K,k}\circ \varphi_{K,k}}}(K(s-t_{K,k-1}))(Y_K(t_{K,k-1},x))\Bigr)\\
	=&K(t-s)d\Bigl(\image_{K,k-1}(Y_K(t_{K,k-1},x)),\image_{K,k}\circ \varphi_{K,k}(Y_K(t_{K,k-1},x))\Bigr)\\
	\leq &\int_s^t z_K(r,Y_K(r,x)) \dx r\,.
	\end{align*}
	The first equation follows from the definition of the extension operator~\eqref{eq:DefUk},
	for the second equation we exploit the geodesic property~\eqref{eq:distancePropertyGeodesics}.
	Finally, the last inequality is implied by the definition of the weak material derivative~\eqref{eq:materialDerivativeDef}.
	If $s$ and $t$ are not in the same interval, we can use the triangle inequality multiple times, which concludes the proof.
\end{proof}
The next lemma allows us to bound the $H^m(\domain)$-norm of the displacements by a function solely depending on the energy~$\boldsymbol{R}$.
\begin{lemma}\label{lemm:growthControl}
	Under the assumptions~\ref{W1} and \ref{W2} there exists a continuous and mono\-tonically increasing function $\theta \colon \R^+_0\rightarrow\R^+_0$
	with $\theta(0)=0$ such that
	\[
	\Vert\varphi-\Id\Vert_{H^m(\domain)}\leq\theta\left(\boldsymbol{R}(\image,\tilde\image,\varphi)\right)
	\]
	for all $\image,\tilde\image\in L^2(\domain, \Hadamard)$ and all $\varphi\in\mathcal{A}_\varepsilon$.
	Furthermore, $\theta(x)\leq C(x+x^2)^\frac{1}{2}$ for a constant $C>0$.
\end{lemma}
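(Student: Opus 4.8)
The claim bounds $\|\varphi-\Id\|_{H^m(\domain)}$ in terms of $\boldsymbol R(\image,\tilde\image,\varphi)$ alone. The crucial observation is that $\boldsymbol R(\image,\tilde\image,\varphi)$ contains the term $\gamma\|D^m\varphi\|_{L^2(\domain)}^2$ plus $\int_\domain\energyDensity(D\varphi)\dx x$ plus a nonnegative fidelity term. Since $\varphi-\Id$ vanishes on $\partial\domain$, the full $H^m$-norm of $\varphi-\Id$ is controlled by the top-order seminorm $|D^m\varphi|_{L^2}$ once we also control a lower-order quantity; the elastic energy density $\energyDensity$ via \ref{W2} supplies precisely this lower-order control through the symmetric gradient. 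So the proof decomposes into: (i) extract from $\energyDensity$ a bound on $\|\varepsilon[\varphi-\Id]\|_{L^2(\domain)}$; (ii) upgrade this via Korn's inequality to a bound on $\|\nabla(\varphi-\Id)\|_{L^2(\domain)}$, hence on $\|\varphi-\Id\|_{H^1(\domain)}$ by Poincaré (using the zero boundary trace); and (iii) interpolate between $H^1$ and the $H^m$-seminorm bound coming from the $\gamma|D^m\varphi|^2$ term to close the estimate.

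\textbf{Step (i): coercivity of $\energyDensity$.} Split $\domain$ into $\domain_{<}=\{x:|D\varphi(x)-\onesymbol|<r_\energyDensity\}$ and $\domain_{\geq}=\{x:|D\varphi(x)-\onesymbol|\geq r_\energyDensity\}$. On $\domain_{<}$, \eqref{eq:energy3} gives $C_{\energyDensity,1}|D\varphi-\onesymbol|_{\mathrm{sym}}^2\le\energyDensity(D\varphi)$ pointwise, i.e.\ $C_{\energyDensity,1}|\varepsilon[\varphi-\Id]|^2\le\energyDensity(D\varphi)$ since $\varepsilon[\varphi-\Id]=(D\varphi-\onesymbol)^{\mathrm{sym}}$. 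On $\domain_{\geq}$, \eqref{eq:energy4} gives $C_{\energyDensity,2}\le\energyDensity(D\varphi)$, so the measure $|\domain_{\geq}|\le C_{\energyDensity,2}^{-1}\int_{\domain_{\geq}}\energyDensity(D\varphi)\dx x$; on this set we nonetheless have $|\varepsilon[\varphi-\Id]|^2\le C|D\varphi-\onesymbol|^2$, but we cannot bound $|D\varphi-\onesymbol|$ pointwise there. Here is where the top-order term helps: one uses the $\gamma|D^m\varphi|^2$ control together with a Gagliardo–Nirenberg/Sobolev embedding (valid since $m>1+\tfrac n2$, so $W^{m,2}\hookrightarrow W^{1,\infty}$) to get $\|D\varphi-\onesymbol\|_{L^\infty}$ bounded by $\|\varphi-\Id\|_{H^m}$, hence $\int_{\domain_{\geq}}|\varepsilon[\varphi-\Id]|^2\dx x\le C\,|\domain_{\geq}|\,\|\varphi-\Id\|_{H^m}^2\le C C_{\energyDensity,2}^{-1}\boldsymbol R\,\|\varphi-\Id\|_{H^m}^2$. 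Combining both sets,
\[
\|\varepsilon[\varphi-\Id]\|_{L^2(\domain)}^2\le C\bigl(\boldsymbol R+\boldsymbol R\,\|\varphi-\Id\|_{H^m(\domain)}^2\bigr).
\]

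\textbf{Steps (ii)–(iii): Korn, Poincaré, interpolation.} Since $\varphi-\Id\in H^1_0(\domain,\R^n)$ (zero trace) with $\domain$ Lipschitz, the first Korn inequality yields $\|\nabla(\varphi-\Id)\|_{L^2}\le C\|\varepsilon[\varphi-\Id]\|_{L^2}$, and Poincaré then gives $\|\varphi-\Id\|_{H^1(\domain)}\le C\|\varepsilon[\varphi-\Id]\|_{L^2(\domain)}$. Meanwhile $\gamma|D^m\varphi|_{L^2}^2=\gamma|D^m(\varphi-\Id)|_{L^2}^2\le\boldsymbol R$ controls the top-order seminorm. By the interpolation inequality for Sobolev seminorms (Gagliardo–Nirenberg, again using the zero boundary values to convert seminorm bounds to norm bounds), $\|\varphi-\Id\|_{H^m(\domain)}\le C\|\varphi-\Id\|_{H^1(\domain)}^{1-\beta}|D^m(\varphi-\Id)|_{L^2(\domain)}^{\beta}$ for a suitable $\beta\in(0,1)$; combined with Young's inequality this gives, for any $\eta>0$,
\[
\|\varphi-\Id\|_{H^m(\domain)}\le \eta\,\|\varphi-\Id\|_{H^1(\domain)}+C_\eta\,|D^m(\varphi-\Id)|_{L^2(\domain)}\le \eta C\|\varepsilon[\varphi-\Id]\|_{L^2}+C_\eta\boldsymbol R^{1/2}.
\]
Plugging this back into the step-(i) bound, choosing $\eta$ small, and absorbing the $\|\varphi-\Id\|_{H^m}^2$ term on the left (which is legitimate once one knows a priori that the quantity is finite, true since $\varphi\in H^m$), yields $\|\varphi-\Id\|_{H^m(\domain)}^2\le C(\boldsymbol R+\boldsymbol R^2)$, i.e.\ $\|\varphi-\Id\|_{H^m(\domain)}\le C(\boldsymbol R+\boldsymbol R^2)^{1/2}=:\theta(\boldsymbol R)$. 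The function $\theta(x)=C(x+x^2)^{1/2}$ is continuous, monotone increasing, and $\theta(0)=0$, as required.

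\textbf{Main obstacle.} The delicate point is the treatment of the "bad" set $\domain_{\geq}$ where $\energyDensity$ only provides an $L^0$ (measure) bound rather than pointwise control of $|D\varphi-\onesymbol|$. The absorption argument that closes this requires carefully tracking that the quadratic-in-$\|\varphi-\Id\|_{H^m}$ term carries a factor of $\boldsymbol R$ (or can be made small), so that after interpolation and Young's inequality the offending term can be moved to the left-hand side; one must also verify the interpolation constants do not spoil the claimed $\sqrt{x+x^2}$ growth. An alternative, cleaner route avoiding the absorption is to first show independently (using \eqref{eq:energy4} to bound $|\domain_\geq|$ and a Chebyshev-type estimate on $D\varphi$ in $L^2$, since $\int\energyDensity\le\boldsymbol R$ and $\energyDensity$ has at least some coercivity) that $\|D\varphi-\onesymbol\|_{L^2}$ is bounded by a function of $\boldsymbol R$ directly, then Korn/Poincaré/interpolation proceed without the self-referential term; either way the structural mechanism is identical.
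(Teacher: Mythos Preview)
Your overall strategy is sound, but there is a genuine gap in the absorption step, and the paper closes it using an ingredient you never invoke.

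\textbf{The gap.} Your step (i) produces
\[
\|\varepsilon[\varphi-\Id]\|_{L^2}^2 \le C\bigl(\boldsymbol R + \boldsymbol R\,\|\varphi-\Id\|_{H^m}^2\bigr),
\]
and after Korn, Poincar\'e and Gagliardo--Nirenberg you want to absorb the $\boldsymbol R\,\|\varphi-\Id\|_{H^m}^2$ term on the left. But the coefficient in front of that term is $C\boldsymbol R$, not a small constant you can choose: when $\boldsymbol R$ is large the absorption simply fails, and the lemma must hold for \emph{all} $\boldsymbol R$. The interpolation inequality you write, $\|\varphi-\Id\|_{H^m}\le \eta\|\varphi-\Id\|_{H^1}+C_\eta|D^m(\varphi-\Id)|_{L^2}$ with $\eta$ small in front of the \emph{lower} norm, is also not valid; Ehrling-type inequalities put the small parameter in front of the \emph{higher} norm, so no genuine smallness is available to rescue the absorption. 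Your proposed alternative at the end does not work either: on $\domain_\geq$, assumption \eqref{eq:energy4} only gives $\energyDensity(D\varphi)\ge C_{\energyDensity,2}$, i.e.\ a bound on the measure of the bad set, not any $L^2$-coercivity on $D\varphi-\onesymbol$ there, so a Chebyshev estimate on $|D\varphi-\onesymbol|$ is not available from $\energyDensity$ alone.

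\textbf{What the paper does instead.} The missing observation is that $\varphi\in\mathcal A_\epsilon\subset H^m(\domain,\domain)$ maps $\domain$ into $\domain$, so $\|\varphi-\Id\|_{L^\infty}\le\operatorname{diam}(\domain)$ and hence $\|\varphi-\Id\|_{L^2}\le C(\domain)$ \emph{a priori}, independently of $\boldsymbol R$. Combined with the seminorm bound $|\varphi-\Id|_{H^m}\le(\boldsymbol R/\gamma)^{1/2}$ and Gagliardo--Nirenberg, this yields directly
\[
\|\varphi-\Id\|_{C^{1,\alpha}(\overline\domain)}\le C\|\varphi-\Id\|_{H^m(\domain)}\le C+C\sqrt{\boldsymbol R}
\]
without any self-reference. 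This preliminary $C^{1,\alpha}$ bound is then used pointwise on $\domain_\geq$ to estimate $\int_{\domain_\geq}|(D\varphi)^{\mathrm{sym}}-\onesymbol|^2\dx x\le |\domain_\geq|\,(C+C\sqrt{\boldsymbol R})^2\le C(\boldsymbol R+\boldsymbol R^2)$, after which Korn, Poincar\'e and Gagliardo--Nirenberg close the argument exactly as you outline, giving $\|\varphi-\Id\|_{H^m}^2\le C(\boldsymbol R+\boldsymbol R^2)$. Once you add this one observation about the range of $\varphi$, the rest of your proof goes through.
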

\begin{proof}
	Set $\overline{\boldsymbol{R}}=\boldsymbol{R}(\image,\tilde\image,\varphi)$, which is defined in~\eqref{eq:PairwiseEnergy}.
	The Gagliardo--Nirenberg inequality~\cite{Nir1966} implies
	\begin{equation}
	\Vert\varphi-\Id\Vert_{H^m(\domain)}\leq C\left(\Vert\varphi-\Id\Vert_{L^2(\domain)}+\vert \varphi-\Id\vert _{H^m(\domain)}\right)\,.
	\label{eq:mGrowthDisplacement}
	\end{equation}
	The $H^m(\domain)$-seminorm of the displacement can be controlled as follows
	\begin{equation}
	\vert \varphi-\Id\vert _{H^m(\domain)}=\vert \varphi\vert _{H^m(\domain)}\leq\sqrt{\tfrac{\overline{\boldsymbol{R}}}{\gamma}}\,,
	\label{eq:displacementHigherOrderControl}
	\end{equation}
	which is implied by the definition of $\overline{\boldsymbol{R}}$.
	Since $\varphi\in H^m(\domain,\domain)$ implies $\Vert\varphi-\Id\Vert_{L^2(\domain)} \leq 2 \text{diam}(\Omega)$, this already shows for $\alpha\in(0,m-1-\frac{n}{2})$ that
	\begin{equation}
	\Vert\varphi-\Id\Vert_{C^{1,\alpha}(\overline\domain)}\leq C\Vert\varphi-\Id\Vert_{H^m(\domain)}\leq C+C\sqrt{\overline{\boldsymbol{R}}}\,.
	\label{eq:displacementControl}
	\end{equation}
	To control the lower order term appearing on the right-hand side of~\eqref{eq:mGrowthDisplacement}, we first define the set
	$\domain'=\{x\in\domain:\Vert D\varphi(x)-\Id\Vert <r_\energyDensity\}$.
	Then, by using \eqref{eq:energy3} and \eqref{eq:energy4}, we obtain
	\[
	\vert \domain\backslash\domain'\vert C_{\energyDensity,2}\leq\int_\domain\energyDensity(D\varphi)\dx x\leq\overline{\boldsymbol{R}}\,,
	\]
	which implies $\vert \domain\backslash\domain'\vert \leq\frac{\overline{\boldsymbol{R}}}{C_{\energyDensity,2}}$.
	Hence, by taking into account \eqref{eq:displacementControl}, we deduce
	\begin{align}
	\int_\domain\Vert (D\varphi)^\mathrm{sym}-\onesymbol\Vert ^2\dx x
	&=
	\int_{\domain'}\Vert (D\varphi)^\mathrm{sym}-\onesymbol\Vert ^2\dx x+\int_{\domain\backslash\domain'}\Vert (D\varphi)^\mathrm{sym}-\onesymbol\Vert ^2\dx x\notag\\
	&\leq
	\int_\domain\frac{\energyDensity(D\varphi)}{C_{\energyDensity,1}}\dx x+\vert \domain\backslash\domain'\vert \left(C+C\sqrt{\overline{\boldsymbol{R}}}\right)^2\notag\\
	&\leq\frac{\overline{\boldsymbol{R}}}{C_{\energyDensity,1}}+\frac{\overline{\boldsymbol{R}}}{C_{\energyDensity,2}}\left(C+C\overline{\boldsymbol{R}}\right)\,.
	\label{eq:convergenceLowerLTwo}
	\end{align}
	Thus, the lemma follows from \eqref{eq:mGrowthDisplacement}, where the first term is estimated by combining Korn's inequality with \eqref{eq:convergenceLowerLTwo}, and the second term is estimated using \eqref{eq:displacementHigherOrderControl}.
\end{proof}

\section{Mosco--Convergence of time discrete geodesic paths}\label{sec:Mosco}
In this section, we prove the Mosco--convergence of $\boldsymbol{J}_K$ to $\boldsymbol{\mathcal{J}}$ defined in \eqref{eq:c_path} and the convergence of time discrete geodesic paths to a time continuous minimizer of $\boldsymbol{\mathcal{J}}$.
The general procedure follows  
the Mosco--convergence proof in the Euclidean setting \cite{BeEf14}.
Nevertheless we give a comprehensive proof of the convergence result and work out the substantial differences due to the manifold setting. 
These differences are highlighted throughout the proof.
In what follows, we pass to subsequences several times and to increase readability, 
we frequently avoid relabeling subsequences if obvious.
As a first step, we extend the discrete functional~$\boldsymbol{J}_K\colon L^2(\domain,\Hadamard)^{K+1}\times(\mathcal{A}_\varepsilon)^K \to [0, \infty]$ to a functional $\boldsymbol{\mathcal{J}}_K \colon L^2([0,1],L^2(\domain,\Hadamard))\to[0,\infty]$ by
\begin{equation}
\boldsymbol{\mathcal{J}}_K(\image) =
\begin{cases}
\displaystyle\inf_{\boldsymbol{\overline{\varphi}}_K\in(\mathcal{A}_\varepsilon)^K}\left\{\boldsymbol{J}_K(\boldsymbol{\image}_K,\boldsymbol{\overline{\varphi}}_K):
\image^{\mathrm{ext}}_K(\boldsymbol{\image}_K, \boldsymbol{\overline{\varphi}}_K)=\image\right\},
&\hspace{-1.5ex}\text{if there exist }(\boldsymbol{\image}_K, \boldsymbol{\varphi}_K)
\text{ such}\\[-1.5ex]
&\hspace{-1.5ex}\text{that }\image=\image^{\mathrm{ext}}_K(\boldsymbol{\image}_K,\boldsymbol{\varphi}_K)\,,
\\[.5em]
+\infty\,,& \hspace{-1.5ex} \text{else}\,.
\end{cases}	
\label{eq:FuncGamma}
\end{equation}
The condition $\image^{\mathrm{ext}}_K(\boldsymbol{\image}_K, \boldsymbol{\varphi}_K) = \image$ has to hold pointwisely for every $t\in [0,1]$
since the involved expressions are continuous in time.
In fact, it is finite only if for the image path~$\image$ a discrete image path
$\boldsymbol{\image}_K\in L^2(\domain,\Hadamard)^{K+1}$ and a vector of deformations $\boldsymbol\varphi_K\in(\mathcal{A}_\varepsilon)^K$
exist such that $\image=\image^{\mathrm{ext}}_K(\boldsymbol{\image}_K,\boldsymbol{\varphi}_K)$.
In this case, the extended energy coincides with the infimum with respect to the deformation vector for fixed~$\boldsymbol{\image}_K$.
The following lemma guarantees that the infimum is actually attained.
\begin{lemma}\label{lemm:FuncGammaAttainment}
	If for the given fixed image path~$\image$ a discrete image path
	$\boldsymbol{\image}_K\in L^2(\domain,\Hadamard)^{K+1}$ and a vector of deformations $\boldsymbol\varphi_K\in(\mathcal{A}_\varepsilon)^K$
	exist such that $\image=\image^{\mathrm{ext}}_K(\boldsymbol{\image}_K,\boldsymbol{\varphi}_K)$, then 
	the infimum with respect to the vector of deformations  in \eqref{eq:FuncGamma} is attained for some $\boldsymbol{\varphi}_K\in(\mathcal{A}_\varepsilon)^K$.
\end{lemma}
\begin{proof}
	Let $\{\boldsymbol{\varphi}_K^j\}_{j\in\N}\subset(\mathcal{A}_\varepsilon)^K$ be a minimizing sequence for $\boldsymbol{\varphi}_K\mapsto\boldsymbol{J}_K(\boldsymbol{\image}_K,\boldsymbol{\varphi}_K)$,
	which satisfies the equality constraint $\image^{\mathrm{ext}}_K(\boldsymbol{\image}_K,\boldsymbol{\varphi}_K^j)=\image$ for every $j\in\N$.
	Due to the reflexivity of $H^m(\domain,\domain)^K$ a subsequence (not relabeled) exists such that $\boldsymbol{\varphi}_K^j\rightharpoonup\boldsymbol{\overline{\varphi}}_K$ in $H^m(\domain,\domain)^K$.
	The weak lower semi-continuity and the coercivity of $\boldsymbol{\varphi}_K \mapsto\boldsymbol{J}_K(\boldsymbol{\image}_K,\boldsymbol{\varphi}_K)$ are shown in \cite[Theorem~4]{NPS17}.
	Hence, it remains to prove the weak closedness of the equality constraint $\image=\image^{\mathrm{ext}}_K(\boldsymbol{\image}_K,\boldsymbol{\varphi}_K)$.
	Since $H^m(\domain,\domain)^K\hookrightarrow C^{1,\alpha}(\overline\domain,\overline\domain)^K$, we can infer the strong convergence of
	$\boldsymbol{\varphi}_K^j\rightarrow\boldsymbol{\varphi}_K$ in $C^{1,\alpha}(\overline\domain,\overline\domain)^K$.
	Using \cref{cor:stet_norm_2}
	we conclude that for every $t\in[0,1]$ and a.e.~$x\in\domain$
	\[
	\image^{\mathrm{ext}}_K(\boldsymbol{\image}_K,\boldsymbol{\overline{\varphi}}_K)(t,x)=\lim_{j\to\infty}\image^{\mathrm{ext}}_K(\boldsymbol{\image}_K,\boldsymbol{\varphi}_K^j)(t,x)=\image(t,x)
	\]
	holds true.
\end{proof}
In what follows, we will always use the symbol $\boldsymbol{\overline{\varphi}}_K$ for the minimizing set of deformations for given $\boldsymbol{\image}_K$.
The ingredients for the Mosco--convergence introduced in~\Cref{def:MoscoConv} are the \ref{eq:Mosco1} (\Cref{thm:liminf}) and the \ref{eq:Mosco2} (\Cref{thm:limsup}). 
\begin{theorem}[liminf-inequality]\label{thm:liminf}
	Under the assumptions \ref{W1}, \ref{W2} and \ref{W3}
	the time discrete path energy $\boldsymbol{\mathcal{J}}_K$ satisfies the \ref{eq:Mosco1}
	for $\boldsymbol{\mathcal{J}}$ with respect to the $L^2([0,1],L^2(\domain,\Hadamard))$-topology.
\end{theorem}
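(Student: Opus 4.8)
The plan is to mirror the two-step structure of the Euclidean Mosco--liminf proof of \cite{BeEf14}, but to run all compactness arguments in the Lagrangian frame attached to the discrete flows, where the manifold-valued setting is best behaved. Given $\image_K\rightharpoonup\image$, I may assume that $\liminf_K\boldsymbol{\mathcal{J}}_K(\image_K)$ is a finite limit, so that along a subsequence $\image_K=\image^{int}_K(\boldsymbol{\image}_K,\boldsymbol{\varphi}_K)$ for minimizers $\boldsymbol{\varphi}_K$ of \eqref{eq:d_path} with $\boldsymbol{J}_K(\boldsymbol{\image}_K,\boldsymbol{\varphi}_K)\le C$ (otherwise there is nothing to prove). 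Splitting the energy into the elastic, higher order and distance contributions of $\boldsymbol{R}$ gives uniform bounds on $K\sum_k\int_\domain\gamma|D^m\varphi_{K,k}|^2$, on $K\sum_k\int_\domain\energyDensity(D\varphi_{K,k})$, and on $\frac1\delta K\sum_k\dist_2^2(\image_{K,k-1},\image_{K,k}\circ\varphi_{K,k})$; in particular $\max_k\boldsymbol{R}(\image_{K,k-1},\image_{K,k},\varphi_{K,k})\le C/K$. With $\theta(x)\le C(x+x^2)^{1/2}$, \Cref{lemm:growthControl} then yields $\max_k\|\varphi_{K,k}-\Id\|_{H^m(\domain)}\to 0$, which validates the standing smallness assumption of \cref{sec:Extension}, and (summing in time, using $\velocityConstant_{K,k}=K(\varphi_{K,k}-\Id)$) a uniform bound on $\|\velocityConstant_K\|_{L^2((0,1),\mathcal V)}$; a change of variables with $y_{K,k}$, whose Jacobian tends to $1$ uniformly, bounds $\|z_K\|_{L^2((0,1),L^2(\domain))}$. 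Passing to a further subsequence, $\velocityConstant_K\rightharpoonup v$ in $L^2((0,1),\mathcal V)$ and $z_K\rightharpoonup z$ in $L^2((0,1),L^2(\domain))$. Since $\velocityPullback_K(t,\cdot)=\velocityConstant_{K,k}\circ x_{K,k}(t,\cdot)$ with $x_{K,k}(t,\cdot)\to\Id$, one obtains $\|\velocityPullback_K-\velocityConstant_K\|_{L^2((0,1),C^0(\overline\domain))}\to0$ and, via \eqref{eq:velocityEstimate}, a uniform bound on $\int_0^1\|\velocityPullback_K(t,\cdot)\|_{C^{1,\alpha}(\overline\domain)}^2\,\dx t$; combining the flow continuity of \Cref{thm:DiffeoVelo}, the Lipschitz dependence in \Cref{rem:DiffeoVeloRem}, and the uniform $C^{1,\alpha}$-bounds on $Y_K,X_K$ from \eqref{eq:Gronwall} with an Arzelà--Ascoli argument, the discrete flow $Y_K$ (which by \Cref{prop:admExtension} solves \eqref{eq:ODESys1} for $\velocityPullback_K$) and its inverse $X_K$ converge in $C^0([0,1],C^1(\overline\domain))$ to the flow $Y$ of $v$ and to $X=Y^{-1}$, with $|\det DX_K|\le C$ uniformly.

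\emph{Hölder continuity in time and the limit image.} Set $\Phi_K(t,\cdot):=\image^{int}_K(t,Y_K(t,\cdot))$. Because $x_{K,k}(t,Y_K(t,x))=Y_K(t_{K,k-1},x)$, on each time interval $\Phi_K$ varies only through the geodesic parameter, so after a change of variables $\dist_2(\Phi_K(t,\cdot),\Phi_K(s,\cdot))\le C\,K|t-s|\,\dist_2(\image_{K,k-1},\image_{K,k}\circ\varphi_{K,k})\le C\sqrt{\delta}\,|t-s|^{1/2}$ for $t,s$ in one interval, and Cauchy--Schwarz over the intervals between arbitrary $t<s$ shows $\Phi_K\in\setHoelder$ with $\alpha=\tfrac12$ and a uniform $L$. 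Together with the Bochner bound on $\image_K$ and $|\det DX_K|\le C$ this gives $\sup_t\dist_2(\Phi_K(t,\cdot),a)\le C$, and since bounded sets of the Hadamard space $L^2(\domain,\Hadamard)$ are weakly sequentially compact, an Arzelà--Ascoli argument on a countable dense set of times (extended by weak lower semicontinuity of $\dist_2$) produces a subsequence with $\Phi_K(t,\cdot)\rightharpoonup\Phi(t,\cdot)$ for every $t$ and $\Phi\in\setHoelder$ by \Cref{thm:LipClos}. It then remains to identify $\Phi(t,\cdot)=\image(t,Y(t,\cdot))$: from $\image^{int}_K(t,\cdot)=\Phi_K(t,X_K(t,\cdot))$ and the fact that precomposition with the uniformly convergent diffeomorphisms $X_K(t,\cdot)$ (with controlled Jacobians) is continuous for weak $L^2(\domain,\Hadamard)$-convergence, one gets $\image^{int}_K(t,\cdot)\rightharpoonup\Phi(t,X(t,\cdot))$ for a.e.\ $t$, which must coincide with the given Bochner weak limit $\image$. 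I expect this last point---weak continuity of composition with converging diffeomorphisms for manifold-valued maps, for which \Cref{cor:stet_norm_2} only supplies the strong-convergence analogue---to be the main obstacle, and the place where the argument genuinely leaves the Euclidean framework of \cite{BeEf14}.

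\emph{Passing to the limit in the variational inequality.} By \Cref{prop:admExtension}, $d(\Phi_K(t,x),\Phi_K(s,x))\le\int_t^s z_K(r,Y_K(r,x))\,\dx r$ for all $t<s$. On the right-hand side, combining $z_K\rightharpoonup z$ with the strong convergence $Y_K,X_K\to Y,X$ in $C^1$ and the uniform Jacobian bounds yields, by a change of variables, $z_K(\cdot,Y_K(\cdot,\cdot))\rightharpoonup z(\cdot,Y(\cdot,\cdot))$ in $L^2((0,1)\times\domain)$, hence $\int_t^s z_K(r,Y_K(r,\cdot))\,\dx r\rightharpoonup\int_t^s z(r,Y(r,\cdot))\,\dx r$ in $L^2(\domain)$. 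On the left-hand side, $\Phi_K(t,\cdot)\rightharpoonup\image(t,Y(t,\cdot))$ for a.e.\ $t$ and the joint weak lower semicontinuity of $d(\cdot,\cdot)$, tested against nonnegative functions, give $d(\image(t,Y(t,\cdot)),\image(s,Y(s,\cdot)))\le\liminf_K d(\Phi_K(t,\cdot),\Phi_K(s,\cdot))$. Combining the two, \eqref{eq:ODESys2} holds for a.e.\ $t<s$, and then for all $t<s$ by the already established Hölder continuity of $\image$ in time; thus $(v,z)\in\sol(\image)$.

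\emph{Lower semicontinuity of the energy.} The functional $(v,z)\mapsto\int_0^1\int_\domain L[v,v]+\regparam z^2\,\dx x\,\dx t$ is convex and nonnegative, hence weakly lower semicontinuous, so $\int_0^1\int_\domain L[v,v]+\regparam z^2\le\liminf_K\int_0^1\int_\domain L[\velocityConstant_K,\velocityConstant_K]+\regparam z_K^2$. Moreover $\int_0^1\int_\domain L[\velocityConstant_K,\velocityConstant_K]$ differs from $K\sum_k\int_\domain\bigl(\tfrac12 D^2\energyDensity(\onesymbol)(D\varphi_{K,k}-\onesymbol,D\varphi_{K,k}-\onesymbol)+\gamma|D^m\varphi_{K,k}|^2\bigr)$ only through the identity \eqref{eq:energy2}, and the second order Taylor expansion of $\energyDensity$ at $\onesymbol$ together with $\max_k\|\varphi_{K,k}-\Id\|_{C^1(\overline\domain)}\to0$ bounds the defect $K\sum_k\int_\domain\bigl(\tfrac12 D^2\energyDensity(\onesymbol)(D\varphi_{K,k}-\onesymbol,D\varphi_{K,k}-\onesymbol)-\energyDensity(D\varphi_{K,k})\bigr)$ by $o(1)\cdot K\sum_k|\varphi_{K,k}-\Id|_{H^1(\domain)}^2$, which is $o(1)$ since the last sum is bounded by \Cref{lemm:growthControl}; the Jacobian defect in the $z_K$-term is absorbed likewise. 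Hence $\int_0^1\int_\domain L[\velocityConstant_K,\velocityConstant_K]+\regparam z_K^2\le\boldsymbol{\mathcal{J}}_K(\image_K)+o(1)$, and therefore $\boldsymbol{\mathcal{J}}(\image)\le\int_0^1\int_\domain L[v,v]+\regparam z^2\le\liminf_K\boldsymbol{\mathcal{J}}_K(\image_K)$, which is the asserted \eqref{eq:Mosco1}.
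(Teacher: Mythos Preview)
Your proposal follows the same three-step structure as the paper (lower semicontinuity of the $z$-part, of the $L$-part, and verification of admissibility), and your arguments for the first two steps are essentially the paper's. The genuine difference lies in step~3, precisely where you flag the ``main obstacle'': identifying the limit of $\Phi_K=\image_K\circ Y_K$ with $\image\circ Y$.

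The paper avoids your Arzel\`a--Ascoli construction of an abstract weak limit $\Phi$ followed by the (unresolved) attempt to push weak convergence through composition with $X_K$. Instead it works in the \emph{weighted} Bochner space $L^2((0,1),L^2(\domain,\Hadamard),|\det DY|)$ and observes that the change of variables $x\mapsto Y_K(t,x)$ transforms the asymptotic-center functional \eqref{eq:WeakConv}, which \emph{defines} weak convergence in a Hadamard space, bijectively: for any test function $g$,
\[
\limsup_{K}\int_0^1\!\!\int_\domain d(\image_K,g)^2\,\dx x\,\dx t
=\limsup_{K}\int_0^1\!\!\int_\domain d\bigl(\image_K(t,Y_K(t,x)),g(t,Y(t,x))\bigr)^2\,|\det DY|\,\dx x\,\dx t,
\]
using $|\det DY_K|\to|\det DY|$ uniformly together with $g\circ Y_K\to g\circ Y$ strongly (\cref{lemm:stet_norm}). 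Since $\image_K\rightharpoonup\image$ means $\image$ is the unique minimizer on the left, $\image\circ Y$ is the unique minimizer on the right, i.e.\ $\image_K\circ Y_K\rightharpoonup\image\circ Y$ in the weighted space. This is exactly the ``weak continuity of composition'' you need, and it comes for free once the Jacobian weight is carried along. The H\"older class $A_{\frac12,L,|\det DY|}$ is then weakly closed by \cref{thm:LipClos}, and the variational inequality \eqref{eq:ODESys2} is verified by integrating over a contradiction set $B$ and invoking weak lower semicontinuity of the jointly convex distance functional on $L^2(B,\Hadamard)$ (your pointwise ``$d\le\liminf d$'' formulation is not quite literally available and should be replaced by this integrated argument).

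So the gap you identified is real but localized; it is closed by the weighted-space trick rather than by any direct analysis of manifold-valued weak convergence under composition.
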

\begin{proof}
	First, let us give a brief outline of the structure of this proof to facilitate reading. Indeed, the different steps of the proof are as follows:
	\begin{enumerate} \setlength{\itemindent}{.2in}
		\item\textit{Identification of the image and deformation families.}
		In the first step, we retrieve $\boldsymbol{\image}_K$ and $\overline{\boldsymbol\varphi}_K$ from the path~$\image_K$.
		\item\textit{Lower semi-continuity of the weak material derivative.} 
		The convergence of the discrete material derivative~$z_K$ to a limit weak material derivative~$z$ is shown and the lower semi-continuity 
		\[
		\int_0^1\int_\domain z^2\dx x\dx t\leq\liminf_{K\to\infty}K\sum_{k=1}^K\int_{\domain}d\left(\image_{K,k-1}(x),\image_{K,k}\circ\overline{\varphi}_{K,k}(x)\right)^2\dx x
		\]
		is verified.
		\item\textit{Lower semi-continuity of the viscous dissipation.}
		The uniform boundedness of the velocity field~$\velocityConstant_K=\velocityConstant_K(\overline{\boldsymbol\varphi}_K)$ in~$K$ is proven,
		which readily implies $\velocityConstant_K\rightharpoonup\velocityPullback$ in $L^2((0,1),\mathcal V)$.
		Then, the relation 
		\[
		\int_0^1\int_\domain L[\velocity,\velocity]\leq\liminf_{K\to\infty}K\sum_{k=1}^K\int_{\domain}\energyDensity(D\overline{\varphi}_{K,k})+\gamma\Vert D^m \overline{\varphi}_{K,k}\Vert^2\dx x
		\]
		is shown.
		\item \textit{Verification of the admissibility of the limit.}
		In the final step, we prove that $(\image,\velocityPullback,Y,z)$ is a solution of \eqref{eq:ODESys1} and \eqref{eq:ODESys2},
		where $Y$ is the flow associated with~$v$ and $\image$ is the limit image path.
	\end{enumerate}
	
	\bigskip
	
	\paragraph{1. Identification of the image and deformation vectors} 
	Let $\image_K\in L^2([0,1],L^2(\domain,\Hadamard))$
	be a sequence which weakly converges to an image path~$\image\in L^2([0,1],L^2(\domain,\Hadamard))$.
	If we exclude the trivial case $\liminf_{K \to \infty} \boldsymbol{\mathcal{J}}_K(\image_K) = \infty$ and eventually pass to a subsequence (without relabeling), we may assume
	\[
	\boldsymbol{\mathcal{J}}_K(\image_K)\leq\overline{\mathcal{J}}<\infty
	\]
	for all $K\in \N$. By definition of $\boldsymbol{\mathcal{J}}_K$ this directly implies $\image_K = \image^{\mathrm{ext}}_K(\boldsymbol{\image}_K,\overline{\boldsymbol\varphi}_K)$
	with $\boldsymbol{\image}_K=(\image_{K,0},\ldots,\image_{K,K})\in L^2(\domain,\Hadamard)^{K+1}$
	and $\overline{\boldsymbol\varphi}_K=(\overline{\varphi}_{K,1},\ldots,\overline{\varphi}_{K,K})\in(\mathcal{A}_\varepsilon)^K$, which is the minimizing deformation in \eqref{eq:FuncGamma} (existence is proven in~\Cref{lemm:FuncGammaAttainment}).
	In particular, by incorporating \cref{lemm:growthControl} we deduce
	\begin{equation}\label{eq:growthControl}
	\max_{k=1,\dots,K}\Vert\overline{\varphi}_{K,k}-\Id\Vert_{C^{1,\alpha}(\overline\domain)}
	\leq C\max_{k=1,\dots,K}\Vert\overline{\varphi}_{K,k}-\Id\Vert_{H^m(\domain)}
	\leq C\theta(\overline{\mathcal{J}} K^{-1})\leq CK^{-\frac{1}{2}}\,.
	\end{equation}
	We denote by $Y_K$, $X_K$, $v_K$ and $z_K$ the discrete quantities associated with $\overline{\boldsymbol\varphi}_K$ defined in~\cref{sec:Extension},
	which exist for~$K$ sufficiently large.
	
	\medskip
	
	\paragraph{2. Lower semi-continuity of the weak material derivative} 
	Let us remark that this step resembles the first step of the proof in the Euclidean setting
	replacing the squared $L^2$-norm by the squared distance in the Hadamard manifold.
	
	A straightforward computation shows
	\begin{align}
	\int_0^1 \int_\domain z_K^2\dx x\dx t
	= &\sum_{k=1}^K \int_{t_{K,k-1}}^{t_{K,k}} \int_{\domain} K^2 d\big(\image_{K,k-1}(x_{K,k}(t,x)),\image_{K,k}\circ \overline{\varphi}_{K,k} (x_{K,k}(t,x))\big)^2 \dx x \dx t\notag\\
	= &\sum_{k=1}^K \int_{t_{K,k-1}}^{t_{K,k}} \int_{\domain} K^2 d\big(\image_{K,k-1}(x),\image_{K,k}\circ \overline{\varphi}_{K,k} (x)\big)^2 \det(Dy_{K,k}(t,x)) \dx x \dx t\,.
	\label{eq:sourcerelation}
	\end{align}
	Next, we want to bound the difference of $\det(Dy_{K,k})$ and $1$ in the $L^\infty$-norm. Thus, we have
	\[
	Dy_{K,k}(t,x)=\onesymbol+K(t-t_{K,k-1})(D\overline{\varphi}_{K,k}(x)-\onesymbol)\,.
	\]
	Then, the Lipschitz continuity of the determinant on the ball~$B_r(\onesymbol)$ with associated radius $r= \sup_K \max_{k=1,\ldots,K}\Vert\overline{\varphi}_{K,k}-\Id\Vert_{H^{m}(\domain)} < \infty$ implies
	\[
	\Vert \det(Dy_{K,k}(t,x)) - 1 \Vert_{L^\infty([t_{K,k-1},t_{K,k})\times \domain)} \leq C \Vert  \overline{\varphi}_{K,k} - \Id \Vert_{C^{1,\alpha}(\overline{\domain})}\,.
	\]
	Hence, we can deduce from \eqref{eq:growthControl} and $t_{K,k} - t_{K,k-1} = K^{-1}$ that
	\begin{align*}
	&\left\vert\sum_{k=1}^KK^2\int_{t_{K,k-1}}^{t_{K,k}}\int_{\domain}d\bigl(\image_{K,k-1}(x),\image_{K,k}\circ\overline{\varphi}_{K,k}(x)\bigr)^2(\det(Dy_{K,k}(t,x))-1)\dx x\dx t\right\vert\\
	\leq&\delta\overline{\mathcal{J}}C\max_{k=1,\dots,K}\Vert\overline{\varphi}_{K,k}-\Id\Vert_{C^{1,\alpha}(\overline{\domain})}
	\leq\delta\overline{\mathcal{J}}CK^{-\frac{1}{2}}\,.
	\end{align*}
	Taking into account the definition of $z_K$ in \eqref{eq:materialDerivativeDef} this ultimately leads to
	\[
	\lim_{K \to \infty} \int_0^1 \int_\domain z_K^2\dx x\dx t
	= \lim_{K \to \infty} K\sum_{k=1}^K \int_{\domain} d\bigl(\image_{K,k-1}(x),\image_{K,k}\circ \overline{\varphi}_{K,k}(x)\bigr)^2 \dx x\,.
	\]
	This also shows the uniform boundedness of $z_K \in L^2((0,1), L^2(\domain))$, which implies the existence of a weakly convergent subsequence
	with limit $z \in L^2((0,1), L^2(\domain))$. Hence, using the weak lower semi-continuity of the norm we get
	\begin{align*}
	\int_0^1 \int_\domain z^2\dx x\dx t &\leq \liminf_{K \to \infty} \int_0^1\int_\domain z_K^2\dx x\dx t\\ 
	&=\liminf_{K \to \infty} K\sum_{k=1}^K \int_{\domain} d\bigl(\image_{K,k-1}(x),\image_{K,k}\circ \overline{\varphi}_{K,k}(x)\bigr)^2 \dx x\,.
	\end{align*}
	
	\medskip
	
	\paragraph{3. Lower semi-continuity of the viscous dissipation}	
	We highlight that this step differs from the corresponding step appearing in \cite{BeEf14} due to the modification
	of the assumption~\ref{W2}, where the overall structure persists.
	
	Note that the velocity fields~$\velocityPullback_K=\velocityPullback_K(\boldsymbol \varphi_K)$
	are not necessarily in $L^2((0,1),\mathcal V)$.
	The sequence $\velocityConstant_K=\velocityConstant_K(\boldsymbol \varphi_K) \in L^2((0,1),\mathcal V)$ is uniformly bounded in $L^2((0,1),\mathcal V)$. To see this, we first assume that $K$ is sufficiently large
	such that $\max_{k=1,\ldots,K}\|D\overline{\varphi}_{K,k}-\onesymbol\|_{C^0(\overline\domain)}<r_\energyDensity$ (see \ref{W2}), which is possible due to \eqref{eq:growthControl}.
	Then, using Korn's inequality, the Poincar\'e inequality as well as \ref{W2}, we obtain
	\begin{align*}
	\int_0^1\int_\domain \Vert \velocityConstant_K \Vert^2\dx x\dx t
	&\leq C\sum_{k=1}^K\int_{t_{K,k-1}}^{t_{K,k}}\int_\domain K^2\Vert (D\overline{\varphi}_{K,k})^\mathrm{sym}-\onesymbol\Vert^2\dx x\dx t\\
	&\leq CK\sum_{k=1}^K\int_\domain\frac{\energyDensity(D\overline{\varphi}_{K,k})}{C_{\energyDensity,1}}\dx x
	\leq\frac{C\overline{\mathcal{J}}}{C_{\energyDensity,1}}\,,\\
	\int_0^1\int_\domain\Vert D^m\velocityConstant_K\Vert^2\dx x\dx t
	&=\sum_{k=1}^K\int_{t_{K,k-1}}^{t_{K,k}}\int_\domain K^2 \Vert D^m(\overline{\varphi}_{K,k}-\Id)\Vert^2\dx x\dx t\\
	&=\sum_{k=1}^K K\int_\domain \Vert D^m\overline{\varphi}_{K,k}\Vert^2\dx x
	\leq\frac{\overline{\mathcal{J}}}{\gamma}\,.
	\end{align*}
	The Gagliardo--Nirenberg inequality implies uniform boundedness of 
	the sequence~$\velocityConstant_K$ in $L^2((0,1),\mathcal V)$.
	By passing to a subsequence (again labeled in the same way) we can deduce
	$\velocityConstant_K\rightharpoonup \velocity\in L^2((0,1),\mathcal{V})$ for $K\rightarrow\infty$. 
	
	It remains to verify the lower semi-continuity of the sum of the approximate Riemannian distances in~\eqref{eq:d_path}, i.e.
	\begin{equation}\label{eq:liminfL}
	\int_0^1 \int_\domain L[\velocity,\velocity] \leq \liminf_{K \to \infty} K \sum_{k=1}^K \int_{\domain} \energyDensity(D \overline{\varphi}_{K,k}) + \gamma \Vert D^m \overline{\varphi}_{K,k}\Vert^2 \dx x\,.
	\end{equation}
	The second order Taylor expansion around $t_{K,k-1}$ of the function
	$t\mapsto \energyDensity(\onesymbol+(t-t_{K,k-1})D\velocityConstant_{K,k})$	evaluated at $t=t_{K,k}$ yields
	\begin{align}
	\energyDensity(D\overline{\varphi}_{K,k})=&\energyDensity(\onesymbol)+ K^{-1}D\energyDensity(\onesymbol)(D\velocityConstant_{K,k})
	+\frac{1}{2K^2}D^{2}\energyDensity(\onesymbol)(D\velocityConstant_{K,k},D\velocityConstant_{K,k})+r_{K,k}\notag\\
	=&K^{-2}\left(\frac{\lambda}{2}\left(\tr(\varepsilon[\velocityConstant_{K,k}])\right)^2+\mu\tr(\varepsilon[\velocityConstant_{K,k}]^2)\right)+r_{K,k}\,,
	\label{eq:TaylorEnergy}
	\end{align}
	where $r_{K,k}$ denotes a remainder.
	Here, the lower order terms vanish due to \eqref{eq:energy1} and the last equality follows from \eqref{eq:energy2}.
	The remainder satisfies $\Vert r_{K,k}\Vert\leq CK^{-3}\Vert D\velocityConstant_{K,k}\Vert^3$, which follows
	from Taylor's theorem, the definition of~$\velocityConstant_{K,k}=K(\overline{\varphi}_{K,k}-\Id)$ and the growth estimate given in~\cref{eq:growthControl}.
	Then,
	\begin{align*}
	&K\sum_{k=1}^K\int_\domain\energyDensity(D\overline{\varphi}_{K,k})+\gamma \Vert D^m \overline{\varphi}_{K,k}\Vert ^2\dx x \\
	=& K^{-1}\sum_{k=1}^K\int_\domain\frac{\lambda}{2}(\tr(\varepsilon[\velocityConstant_{K,k}]))^2+\mu\tr(\varepsilon[\velocityConstant_{K,k}]^2)+\gamma\Vert D^m\velocityConstant_{K,k}\Vert ^2\dx x + K\sum_{k=1}^K\int_\domain r_{K,k}\dx x\,,
	\end{align*}
	and the remainder is of order $K^{-\frac{1}{2}}$.
	To see this, we apply \eqref{eq:growthControl}, \cref{lemm:growthControl} and the uniform bound on the energy to deduce
	\begin{align*}
	&\quad K\sum_{k=1}^K\int_\domain\Vert r_{K,k}\Vert\dx x\leq CK\sum_{k=1}^K\int_\domain K^{-3}\Vert D\velocityConstant_{K,k}\Vert ^3\dx x\\
	&\leq CK\max_{k=1,\dots,K}\Vert \overline{\varphi}_{K,k}-\Id\Vert _{C^1(\overline\domain)}
	\sum_{k=1}^K\Vert \overline{\varphi}_{K,k}-\Id\Vert _{H^m(\domain)}^2\\
	&\leq CK\theta(\overline{\mathcal{J}} K^{-1})\sum_{k=1}^K\theta\bigl(\boldsymbol{R}(\image_{K,k-1},\image_{K,k},\overline{\varphi}_{K,k})\bigr)^2\\
	&\leq CK^\frac{1}{2}\sum_{k=1}^K\boldsymbol{R}(\image_{K,k-1},\image_{K,k},\overline{\varphi}_{K,k})
	\leq C\overline{\mathcal{J}} K^{-\frac{1}{2}}\,.
	\end{align*}
	Finally, a standard weak lower semi-continuity argument \cite[Theorem~3.20]{Da08} shows 
	\begin{align*}
	&\quad\liminf_{K\to \infty}K\sum_{k=1}^K\int_\domain\energyDensity(D\overline{\varphi}_{K,k})+\gamma\Vert D^m\overline{\varphi}_{K,k}\Vert ^2\dx x\\
	&=\liminf_{K\to \infty} \int_0^1 \int_\domain \frac{\lambda}{2}(\tr \varepsilon[\velocityConstant_K])^2+\mu\tr(\varepsilon[\velocityConstant_K]^2)+\gamma\Vert D^m \velocityConstant_K\Vert ^2 \dx x \dx t\\
	&\geq\int_0^1 \int_\domain \frac{\lambda}{2}(\tr \varepsilon[\velocity])^2+\mu\tr(\varepsilon[\velocity]^2)+\gamma\Vert D^m \velocity\Vert ^2 \dx x \dx t\,,
	\end{align*}
	which implies weak lower semi-continuity
	of the path energy for the sequence~$\{\image_K\}_{K\in\N}$.
	
	\medskip
	
	\paragraph{4. Verification of the admissibility of the limit}
	Finally, it remains to verify that $(\image,\velocityPullback,Y,z)$ for a suitable $Y$ is a solution of \eqref{eq:ODESys1} and \eqref{eq:ODESys2}.
	We have already pointed out that the manifold-valued metamorphosis energy functional necessitates a variational inequality,
	which results in significant modifications of this step compared to \cite{BeEf14}.

	Let $\tilde Y$ denote the solution of
	\begin{equation}
	\begin{array}{rll}
	\displaystyle{\frac{\dx}{\dx t}}\tilde Y(t,x)& =\velocity(t,\tilde Y(t,x)) &\text{for }(t,x) \in [0,1]\times\domain\,,\\[.5em]
	\tilde Y(0,x)&= x  &\text{for } x\in \domain\,,
	\end{array}\label{eq:FlowProof}
	\end{equation}
	which exists due to \cref{thm:DiffeoVelo}. Furthermore, \eqref{eq:velocityEstimate} and the uniform boundedness of $\velocityConstant_K \in L^2((0,1),\mathcal V)$
	imply that the sequence $\velocityPullback_K$ is uniformly bounded in $L^2((0,1),C^{1,\alpha}(\overline{\domain}))$.
	Incorporating \cref{rem:DiffeoVeloRem} we infer that $Y_K$ is uniformly bounded in $C^{0}([0,1],C^{1,\alpha}(\overline{\domain}))$,
	and by exploiting H\"older's inequality we can even show that the sequence is uniformly bounded in $C^{0,\frac{1}{2}}([0,1],C^{1,\alpha}(\overline{\domain}))$.
	Hence, by using the compact embedding of H\"older spaces, the sequence $Y_K$ converges strongly to some $Y$ in $C^{0,\beta}([0,1],C^{1,\beta}(\overline{\domain}))$ for $\beta=\frac{1}{2}\min(\frac{1}{2},\alpha)$.
	
	It remains to verify that $\tilde{Y} = Y$.
	To this end, the solutions of \eqref{eq:FlowProof} corresponding to $\velocityConstant_K$ are denoted by $\tilde{Y}_K$.
	Then,
	\[
	\Vert Y-\tilde Y\Vert_{C^0([0,1]\times\overline{\domain})}\leq\Vert Y-Y_K\Vert_{C^0([0,1]\times \overline{\domain})}+\Vert Y_K-\tilde{Y}_K\Vert_{C^0([0,1]\times\overline{\domain})}+\Vert\tilde{Y}_K-\tilde Y\Vert_{C^0([0,1]\times\overline{\domain})}\,.
	\]
	Here, the first term converges to zero as shown above and the last term converges to zero by the continuous dependence of $\tilde{Y}_K$ on $w_K$ discussed in \cref{thm:DiffeoVelo}.
	Then, we can estimate as follows
	\begin{align}
	\Vert Y_K-\tilde{Y}_K\Vert_{C^0([0,1]\times \overline{\domain})} &\leq C\sum_{k=1}^K\int_{t_{K,k-1}}^{t_{K,k}}\Vert\velocityConstant_{K,k}(s,x_{K,k}(s,\cdot))-\velocityConstant_{K,k}(s,\cdot)\Vert_{C^{0}(\overline{\domain})}\dx s\label{eq:YK1}\\
	&\leq C\sum_{k=1}^K \int_{t_{K,k-1}}^{t_{K,k}}\Vert\velocityConstant_{K,k}(s,\cdot)\Vert_{H^m(\domain)}\Vert y_{K,k}(s,\cdot)-\Id\Vert_{C^{0}(\overline{\domain})}\dx s\notag\\
	&\leq C\Vert\velocityConstant_K\Vert_{L^2((0,1),H^m(\domain))}\max_{k=1,\dots,K}\Vert\overline{\varphi}_{K,k}-\Id \Vert_{C^{0}(\overline{\domain})}\,.\notag
	\end{align}
	Here, the first inequality is deduced from \cref{rem:DiffeoVeloRem}.
	Furthermore, to derive the second inequality we exploit the Lipschitz property of
	$x\mapsto\velocityConstant_{K,k}(s,x_{K,k}(s,x))-\velocityConstant_{K,k}(s,x)$, where the Lipschitz constant is bounded by $C\Vert\velocityConstant_{K,k}(s,\cdot)\Vert_{H^m(\domain)}$,
	and apply the coordinate transform $y_{K,k}(s,\cdot)$.
	The uniform control of $w_K$ and  \eqref{eq:growthControl}  imply $Y = \tilde{Y}$ and by H\"older's inequality
	$Y \in C^{0,\frac{1}{2}}([0,1],C^{1,\alpha}(\overline{\domain}))\,$. Finally, $X_K$ is uniformly bounded in 
	$C^{0,\frac{1}{2}}([0,1],C^{1,\alpha}(\overline{\domain}))$ due to \cref{rem:DiffeoVeloRem}. 
	Thus, \eqref{eq:ODESys1} is fulfilled. 
	
	Next, note that for $s,t\in[0,1]$ we obtain
	\begin{align*}
	\int_{\domain}d\big(\image_K(t,Y_K(t,x)),\image_K(s,Y_K(s,x))\big)^2\dx x 
	&\leq\int_{\domain}\left(\int_t^sz_K(r,Y_K(r,x))\dx r\right)^2\dx x\\
	&\leq \vert s-t\vert\left|\int_{\domain}\int_t^sz_K(r,Y_K(r,x))^2\dx r\dx x\right|\,.
	\end{align*}
	By the uniform boundedness
	of $z_K$ in $L^2((0,1),L^2(\domain))$ we achieve that $\image_K\circ Y_K\in A_{\frac{1}{2},L,\vert\det DY\vert}$ for some appropriate~$L$.
	Next, we verify the weak convergence of a subsequence of $\image_K\circ Y_K$ to $\image\circ Y\in A_{\frac{1}{2},L,\vert \det DY \vert}$.
	To this end, we observe 
	\begin{align*}
	\limsup_{K \to \infty}\dist_2(\image_K,\image)^2 &=\limsup_{K \to \infty} \int_0^1 \int_{\domain} d\big(\image_K(t,Y_K(t,x)),\image(t,Y_K(t,x))\big)^2 \vert \det DY_K \vert \dx x \dx t\\
	&= \limsup_{K \to \infty} \int_0^1 \int_{\domain} d\big(\image_K(t,Y_K(t,x)),\image(t,Y(t,x))\big)^2 \vert \det DY \vert \dx x \dx t\,.
	\end{align*}
	For the first equality we incorporate the transformation formula,
	the second equality follows from the uniform convergence of $DY_K$, the metric triangle inequality and the 
	convergence of $\image(t,Y_K(t,x))$ to $\image(t,Y(t,x))$ (see \cref{lemm:stet_norm}).
	To sum up, this proves the weak convergence of $\image_K\circ Y_K$ according to \cref{eq:WeakConv} and 
	by \cref{thm:LipClos}, the limit is also contained in $A_{\frac{1}{2},L,\vert \det DY \vert}$.
	
	Finally, it remains to verify \eqref{eq:ODESys2}. 
	Assume there exist $s<t\in[0,1]$ such that the set
	\[
	B \coloneqq\left\{x\in\domain\colon d\big(\image(s,Y(s,x)),\image(t,Y(t,x))\big)>\int_s^t z(r,Y(r,x)) \dx r\right\}
	\]
	has positive Lebesgue measure. 
	From the joint convexity of the metric $d(\cdot,\cdot)$ and the continuity of point evaluation in time, we get that the functional
	$\image \mapsto\int_B d(\image(s,x),\image(t,x))\dx x$ 
	is continuous and convex on $A_{\frac{1}{2},L,\vert \det DY \vert}$.
	Now, this  
	implies weak lower semi-continuity of the mapping, see \cite[Lemma 3.2.3]{Bac14}, and we obtain
	\begin{align*}
	&\int_B d\big(\image(s,Y(s,x)),\image(t,Y(t,x))\big)\dx x\leq\liminf_{K \to \infty} \int_B d\big(\image_K(s,Y_K(s,x)),\image_K(t,Y_K(t,x))\big) \dx x\\
	\leq &\liminf_{K \to \infty} \int_B \int_s^t z_K(r,Y_K(r,x)) \dx r\dx x =\int_B \int_s^t z(r,Y(r,x)) \dx r\dx x\,,
	\end{align*}
	where the last equality follows from the weak convergence of $z_K$ combined with the strong convergence of $Y_K$, which also implies the weak convergence of $z_K \circ Y_K$.
	This yields a contradiction and concludes the proof of the \ref{eq:Mosco1}.
\end{proof}
In what follows, we prove the existence of a recovery sequence and thus establish the Mosco--convergence.
As a preparation, we prove that the infimum in \eqref{eq:c_path} is actually attained, where we exploit some results of the proof of \Cref{thm:liminf}.
\begin{proposition}\label{prop:ExistTupel}
	For $\image\in L^2([0,1],L^2(\domain,\Hadamard))$ with $\boldsymbol{\mathcal{J}}(\image)<\infty$ the infimum in \eqref{eq:c_path} is attained, i.e.~there exists a tuple $(v,z)\in\sol(\image)$ satisfying \eqref{eq:ODESys1} and \eqref{eq:ODESys2}.
\end{proposition}
\begin{proof}
	We first observe that the functional $(v,z)\mapsto\int_0^1\int_\domain L[v,v]+\regparam z^2\dx x\dx t$ is weakly lower semi-continuous and coercive on $\sol(\image)$, cf.~\cite{BeKr17}.
	Since $\sol(\image)$ is a subset of a reflexive Banach space, it suffices to prove the weak closedness of $\sol(\image)$ to obtain the existence of
	an optimal tuple $(v,z)\in\sol(\image)$.
	
	Let $\{(\velocityPullback_k,z_k)\}_{k\in\N}\in\sol(\image)$ be a weakly convergent sequence with limit $(\velocityPullback,z)$.
	Due to \cref{thm:DiffeoVelo} the corresponding flows~$Y_k$ and $Y$ given by \eqref{eq:ODESys1} exist and $Y_k\to Y$ in $C^0([0,1]\times\overline{\domain})$ and
	the weak convergence of $\velocityPullback_k$ implies the uniform boundedness of $\{v_k\}_{k \in \N}$ in $L^2((0,1),C^{1,\alpha}(\overline{\domain}))$.
	Thus, the reasoning in the paragraph following \eqref{eq:FlowProof} implies that a subsequence of $\{Y_k\}_{k\in\N}$ converges strongly to~$Y$ in~$C^{0,\beta}([0,1],C^{1,\beta}(\overline{\domain}))$ for $\beta=\frac{1}{2}\min(\frac{1}{2},\alpha)$.
	
	Finally, \cref{lemm:stet_norm} implies $\image(t,Y_k(t,x)) \to \image(t,Y(t,x))$ in $L^2([0,1],L^2(\domain,\Hadamard))$ and the last part of the proof of \cref{thm:liminf} 
	shows that $(\image,Y,z)$ is a solution of \eqref{eq:ODESys2}.
\end{proof}
\begin{theorem}[Recovery sequence]\label{thm:limsup}
	Let $\image_A,\image_B\in L^2(\domain,\Hadamard)$ be fixed input images
	and let $\image\in L^2([0,1],L^2(\domain,\Hadamard))$ be an image path with $\image(0)=\image_A$ and $\image(1)=\image_B$.
	Then there exists a recovery sequence~$\{\image_K\}_{K\in\N}$ with $\image_K(0)=\image_A$ and $\image_K(1)=\image_B$ for all $K\in\N$
	such that the \ref{eq:Mosco2} in \Cref{def:MoscoConv} w.r.t.~the $L^2([0,1],L^2(\domain,\Hadamard))$-topology is valid.
\end{theorem}
\begin{proof} We proceed in three steps, which follow the usual general guideline to show 
	existence of recovery sequences in the context of $\Gamma$ convergence:\medskip
	\begin{enumerate}\setlength{\itemindent}{.2in}
		\item \textit{Construction of the recovery sequence}.
		\item \textit{Verification of the limsup-inequality}.
		\item \textit{Identification of the recovery sequence limit}.
	\end{enumerate}
	\smallskip
	
	\paragraph{1. Construction of the recovery sequence}
	Compared to \cite{BeEf14} our construction avoids the approximation of $v$ and defines the deformations directly.
	Due to \cref{prop:ExistTupel}, there exist optimal $(v, Y, z)$ corresponding to $\image$ satisfying \eqref{eq:ODESys1} and \eqref{eq:ODESys2}.
	Incorporating the flow $Y$, we define for given $K\in \N$ a vector of diffeomorphisms
	$\boldsymbol{\varphi}_K=(\varphi_{K,1},\ldots,\varphi_{K,K})\in H^m(\domain,\R^n)^K$ by
	\[
	\varphi_{K,k} = Y_{t_{K,k-1}}(t_{K,k},\cdot)\,,
	\]
	where $Y_{a}(b,\cdot) \coloneqq Y(b,Y^{-1}(a,\cdot)) \in H^m(\domain)^K$ with $a,b\in[0,1]$.
	This expression coincides with the evaluation at $t=1$ of the flow corresponding to the velocity field $\velocity_{a,b}(t,x)\coloneqq(b-a)\velocity(a+(b-a)t,x)$, i.e.~the solution of
	\begin{align}
	\displaystyle{\frac{\dx}{\dx t}}Y_{a,b}(t,x)&=\velocity_{a,b}(t,Y_{a,b}(t,x))&&\text{for }(t,x)\in[0,1]\times\domain\,,\label{eq:FlowRescaled}\\
	Y_{a,b}(0,x)&=x &&\text{for }x\in\domain\,.\notag
	\end{align}
	Here, $v$ is the velocity field whose existence is postulated in \cref{prop:ExistTupel}.
	Next, we bound the $C^1(\overline\domain)$-norm of the displacements as follows:
	\begin{align}
	&\max_{k \in \{1,\dots,K\}}\Vert\varphi_{K,k}-\Id\Vert_{C^1(\overline\domain)}\notag\\
	\leq&\sup_{\substack{s,t\in[0,1]\\ \vert t-s\vert\leq K^{-1}}}\Vert Y_{s}(t,\cdot)-\Id \Vert_{C^1(\overline \domain)}
	\leq\sup_{\substack{s,t\in[0,1]\\ \vert t-s\vert\leq K^{-1}}} C\int_{0}^{1} \Vert \velocity_{s,t}\bigl(r,Y_{s,t}(r,\cdot)\bigr)\Vert_{H^m(\domain)}\dx r\notag\\
	\leq&\sup_{\substack{s,t\in[0,1]\\ \vert t-s\vert\leq K^{-1}}}C\left\vert\int_{s}^{t}\Vert\velocity(r,\cdot)\Vert_{H^m(\domain)}\dx r\right\vert
	\leq CK^{-\frac{1}{2}}\sup_{ \substack{s,t\in[0,1]\\ \vert t-s\vert\leq K^{-1}}}\left\vert\int_s^t\Vert\velocity(r,\cdot)\Vert_{H^m(\domain)}^2\dx r\right\vert^\frac{1}{2}\,.\label{eq:growthControl2}
	\end{align}
	For the third inequality, we exploit the estimate
	\begin{equation}
	\|v(t,Y(t,\cdot))\|_{H^m(\domain)}\leq C\|v(t,\cdot)\|_{H^m(\domain)}\,,
	\label{eq:concatenationEstimateFlow}
	\end{equation}
	which follows from \cite[Lemma~3.5]{BrVi16} and an extension argument as shown in~\cref{thm:DiffeoVelo}, and use the transformation formula.
	The last inequality is implied by the Cauchy--Schwarz inequality.
	
	Choosing $K$ sufficiently large ensures $\boldsymbol{\varphi}_K \in(\mathcal{A}_\varepsilon)^K$ and we can apply the temporal extension from \cref{sec:Extension}.
	Finally, the recovery sequence is defined as
	$\image_K = \image^{\mathrm{ext}}_K\left(\boldsymbol{\image}_K,\boldsymbol{\varphi}_K\right)$, where
	\[
	\boldsymbol{\image}_K=\left(\image_{K,0},\image_{K,1},\dots,\image_{K,K}\right)=\left(I(t_{K,0},\cdot),\dots,I(t_{K,K},\cdot)\right).
	\]
	
	\medskip
	
	\paragraph{2. Verification of the limsup-inequality} 
	Note that this step shares some similarities with the corresponding step in \cite{BeEf14} with modifications necessitated by the manifold structure and the different construction.
	In the following, all terms in the discrete energy $\boldsymbol{J}_K(\boldsymbol{\image}_K,\boldsymbol{\varphi}_K)$ are estimated separately.
	For any $k=1,\ldots,K$ we infer using \eqref{eq:ODESys2}, Jensen's inequality and \eqref{eq:growthControl2} that
	\begin{align}
	&\int_\domain d\big(\image_{K,k-1},\image_{K,k}\circ\varphi_{K,k}\big)^2\dx x\notag\\
	=&\int_\domain d\big(\image_{K,k-1}\circ Y(t_{K,k-1},x),\image_{K,k}\circ Y(t_{K,k},x)\big)^2\det(DY(t_{K,k-1},x))\dx x\notag\\
	\leq&\int_\domain\left(\int_{t_{K,k-1}}^{t_{K,k}}z(s,Y(s,x))\dx s\right)^2\det(DY(t_{K,k-1},x))\dx x\notag\\
	\leq&\frac{1}{K}\int_{t_{K,k-1}}^{t_{K,k}}\int_\domain z^2(s,x)\det(DY_{s}(t_{K,k-1},x))\dx x\dx s\notag\\
	\leq&\frac{1}{K}\left(1+CK^{-\frac{1}{2}}\right)\int_{t_{K,k-1}}^{t_{K,k}}\int_\domain z^2(s,x)\dx x\dx s\,.\label{eq:zBoundIII}
	\end{align}
	Recall that $w_{K,k} = K(\varphi_{K,k}-\Id)$.
	Now, the same Taylor argument as in \cref{eq:TaylorEnergy} implies
	\begin{align}
	&\int_\domain\energyDensity(D\varphi_{K,k})+\gamma\Vert D^m\varphi_{K,k}\Vert ^2 \dx x \leq K^{-2}\int_\domain L[\velocityConstant_{K,k},\velocityConstant_{K,k}]\dx x + CK^{-3}\int_\domain \Vert D\velocityConstant_{K,k}\Vert ^3\dx x\,.
	\label{eq:remainderEllipticOperator}
	\end{align}
	Summing over the second term on the right hand side and taking into account \eqref{eq:growthControl2} we obtain
	\begin{align}
	&\sum_{k=1}^K \int_\domain \Vert D\velocityConstant_{K,k}\Vert^3\dx x
	\leq C K^3 \sum_{k=1}^K \Vert \varphi_{K,k} - \Id\Vert_{C^1(\overline \domain)}^3 \leq CK^\frac{3}{2}\,.
	\label{eq:estimateRemainderGrowth}
	\end{align}
	A direct application of Jensen's inequality shows that the lower order term satisfies
	\begin{align}
	\int_\domain L[\velocityConstant_{K,k},\velocityConstant_{K,k}]\dx x
	=& \int_\domain L\!\left[ K \!\int_{t_{K,k-1}}^{t_{K,k}} \!\!\velocity(t,Y_{t_{K,k-1}}(t,x))\dx t, K \!\int_{t_{K,k-1}}^{t_{K,k}} \!\!\velocity(t,Y_{t_{K,k-1}}(t,x))\dx t\right]\!\!\dx x \notag\\
	\leq& \int_\domain K\!\int_{t_{K,k-1}}^{t_{K,k}} L[\velocity(t,Y_{t_{K,k-1}}(t,x)),\velocity(t,Y_{t_{K,k-1}}(t,x))]\dx t\dx x\,.
	\label{eq:JensenEllipticOperator}
	\end{align}
	By using \eqref{eq:growthControl2} and $\vert\tr(AB)\vert\leq\vert\tr(A)\vert+\vert\tr(A(B-\Id))\vert$ for~$A,B\in\R^{n\times n}$
	multiple times we can estimate the part corresponding to the first summand of $L$, see \eqref{eq:ellipticOperator}, as follows:
	\begin{align}
	&\int_\domain\int_{t_{K,k-1}}^{t_{K,k}}\tr\Big(D(\velocity(t,Y_{t_{K,k-1}}(t,x)))\Big)^2\dx t\dx x\notag\\
	=&\int_\domain\int_{t_{K,k-1}}^{t_{K,k}}\tr\Big(D\velocity(t,Y_{t_{K,k-1}}(t,x)) DY_{t_{K,k-1}}(t,x)\Big)^2\dx t\dx x\notag\\
	\leq&\int_\domain\int_{t_{K,k-1}}^{t_{K,k}}\tr\Big(D\velocity(t,Y_{t_{K,k-1}}(t,x))\Big)^2
	+\tr\Big(D\velocity(t,Y_{t_{K,k-1}}(t,x))(\Id-Y_{t_{K,k-1}}(t,x))\Big)^2\notag\\
	&+2\left\vert\tr\Big(D\velocity(t,Y_{t_{K,k-1}}(t,x))\Big)\tr\Big(D\velocity(t,Y_{t_{K,k-1}}(t,x))(\Id-Y_{t_{K,k-1}}(t,x))\Big)\right\vert\dx t\dx x\notag\\
	\leq&\int_\domain\int_{t_{K,k-1}}^{t_{K,k}}\tr\Big(D\velocity\bigl(t,Y_{t_{K,k-1}}(t,x))\Big)^2+C(1+\Vert v(t,\cdot)\Vert_{H^m(\domain)}^3)K^{-\frac{1}{2}}\dx t\dx x\notag\\
	\leq&\int_\domain\int_{t_{K,k-1}}^{t_{K,k}}\tr(\varepsilon[v])^2+C(1+\Vert v(t,\cdot)\Vert_{H^m(\domain)}^3)K^{-\frac{1}{2}}\dx t\dx x\,.
	\end{align}
	For the last inequality, we additionally used the transformation formula and~\eqref{eq:growthControl2}.
	The second term in~$L$ is estimated analogously:
	\begin{align}
	&\int_\domain\int_{t_{K,k-1}}^{t_{K,k}}\tr\Big(\varepsilon(\velocity(t,Y_{t_{K,k-1}}(t,x)))^2\Big)\dx t\dx x\notag\\
	\leq &\int_\domain\int_{t_{K,k-1}}^{t_{K,k}}\tr(\varepsilon[v]^2)+C(1+\Vert v(t,\cdot)\Vert_{H^m(\domain)}^3)K^{-\frac{1}{2}}\dx t\dx x\,.
	\end{align}
	It remains to bound the higher order term appearing in the definition of~$L$.
	To this end, we use \eqref{eq:concatenationEstimateFlow} and the bound
	$\Vert fg\Vert_{H^{\tilde m}}\leq C\Vert f\Vert_{H^m}\Vert g\Vert_{H^{\tilde m}}$
	for $f\in H^m(\domain)$, $g\in H^{\tilde m}(\domain)$ and any $0\leq \tilde m\leq m$, see \cite[Lemma~2.3]{InKaTo13}, which results in the estimates
	\begin{align*}
	&\vert v\big(t,Y_{t_{K,k-1}}(t,\cdot)\big)\vert_{H^m(\domain)}\\ \leq\,&\vert Dv\big(t,Y_{t_{K,k-1}}(t,\cdot)\big) \vert_{H^{m-1}(\domain)} + \big\Vert Dv\big(t,Y_{t_{K,k-1}}(t,\cdot)\big)D(Y_{t_{K,k-1}}(t,\cdot)-\Id)\big\Vert_{H^{m-1}(\domain)}\\
	\leq\,&\vert Dv\big(t,Y_{t_{K,k-1}}(t,\cdot)\big) \vert_{H^{m-1}(\domain)} + C\big\Vert v(t,\cdot) \Vert_{H^{m}(\domain)} \big\Vert Y_{t_{K,k-1}}(t,\cdot) - \Id \Vert_{H^{m}(\domain)}\\
	\leq\,&\vert Dv\big(t,Y_{t_{K,k-1}}(t,\cdot)\big) \vert_{H^{m-1}(\domain)} + C\big\Vert v(t,\cdot) \Vert_{H^{m}(\domain)}K^{-\frac{1}{2}}\,.
	\end{align*}
	By iterating this argument and applying a change of variables we obtain for the last term of~$L$
	\begin{align}
	\int_{t_{K,k-1}}^{t_{K,k}} \big\vert \velocity\big(t,Y_{t_{K,k-1}}(t,\cdot)\big)\big\vert^2_{H^m(\domain)} \dx t \leq \int_{t_{K,k-1}}^{t_{K,k}} \vert v(t,\cdot)\vert^2_{H^m(\domain)} + C\bigl\Vert v(t,\cdot) \Vert^2_{H^{m}(\domain)}K^{-\frac{1}{2}} \dx t\,.\label{eq:LastEstimate}
	\end{align}
	By combining the estimate \eqref{eq:zBoundIII} with \eqref{eq:remainderEllipticOperator}--\eqref{eq:LastEstimate} for the second inequality below we get
	\begin{align*}
	\mathcal{J}_K(\image_K)
	&\leq K\sum_{k=1}^K\int_\domain\energyDensity(D\varphi_{K,k})+\gamma\vert D^m\varphi_{K,k}\vert ^2+\regparam d\big(\image_{K,k-1},\image_{K,k}\circ\varphi_{K,k}\big)^2\dx x\\
	&\leq \sum_{k=1}^K\left(\int_{t_{K,k-1}}^{t_{K,k}}\int_\domain L[\velocityPullback,\velocityPullback]+CK^{-1}\vert D\velocityConstant_{K,k}\vert ^3+
	\regparam\left(1+CK^{-\frac{1}{2}}\right)z^2(t,x)\dx x\dx t\right)\\
	&\leq\int_0^1\int_\domain L[\velocity,\velocity]+\regparam z^2(t,x)\dx x\dx t+CK^{-\frac{1}{2}}+C\regparam K^{-\frac{1}{2}}=\mathcal{J}(\image)+\bigO(K^{-\frac{1}{2}})\,,
	\end{align*}
	which readily implies the \ref{eq:Mosco2}.\medskip
	
	\paragraph{3. Identification of the recovery sequence limit}
	It remains to verify the convergence $\image_K\to \image$ in $L^2([0,1],L^2(\domain,\Hadamard))$ as $K\to\infty$.
	To see this we estimate
	\begin{align*}
	&\int_0^1\int_\domain d\big(\image(s,Y(s,x)),\image^{\mathrm{ext}}_K(\boldsymbol{\image}_K, \boldsymbol{\varphi}_K)(s,Y_K(s,x))\big)^2\dx x \dx s\\
	=&\sum_{k=1}^K \int_{t_{K,k-1}}^{t_{K,k}}\int_\domain d\big(\image(s,Y(s,x)),\image^{\mathrm{ext}}_K(\boldsymbol{\image}_K, \boldsymbol{\varphi}_K)(s,Y_K(s,x))\big)^2\dx x \dx s\\
	\leq & C \sum_{k=1}^K\biggl( \int_{t_{K,k-1}}^{t_{K,k}} \int_\domain K^{-2}z^2(s,Y(s,x)) \dx x \dx s\\
	&\hspace{3em}+ \int_{t_{K,k-1}}^{t_{K,k}}\int_\domain d\big(\image_{K,k-1}(Y(t_{K,k-1},x)),\image^{\mathrm{ext}}_K(\boldsymbol{\image}_K, \boldsymbol{\varphi}_K)(s,Y_K(s,x))\big)^2
	\dx x\dx s\biggr)\\
	\leq &CK^{-2} \Vert z(t,Y(t,x))\Vert_{L^2((0,1) \times \domain)}^2\,.
	\end{align*}
	Here, we combined \eqref{eq:ODESys2} with the Cauchy--Schwarz inequality to obtain an estimate for the term $d(I(s,Y(s,x)),\image_{K,k}(Y(t_{K,k-1},x)))$ in the first inequality and used
	the definition of $\image^{\mathrm{ext}}_K$, see \eqref{eq:DefUk}, together with \eqref{eq:distancePropertyGeodesics}, \eqref{eq:ODESys2} and the Cauchy--Schwarz inequality in the second inequality.
	Due to the convergence of $Y_K$ to $Y$ and \cref{cor:stet_norm_2}, this readily implies the claimed convergence $\image^{\mathrm{ext}}_K(\boldsymbol{\image}_K, \boldsymbol{\varphi}_K) \to \image$.
\end{proof}
We conclude this section with the desired convergence statement for discrete geodesic paths.
\begin{theorem}[Convergence of discrete geodesic paths]\label{thm:convergence}
	Let $\image_A,\image_B\in L^2(\domain,\Hadamard)$ and suppose that
	the assumptions \ref{W1}, \ref{W2} and \ref{W3} hold true.
	For every $K\in\N$ let $\image_K$ be a minimizer of $\boldsymbol{\mathcal{J}}_K $
	subject to $\image_K(0)=\image_A$ and $\image_K(1)=\image_B$.
	Then, a subsequence of $\{\image_K\}_{K\in \N}$ converges weakly in $L^2([0,1],L^2(\domain,\Hadamard))$
	to a minimizer of the continuous path energy $\boldsymbol{\mathcal{J}}$ as $K\rightarrow\infty$, and the associated sequence of discrete energies converges to the minimal continuous path energy.
\end{theorem}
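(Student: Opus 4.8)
The plan is to run the classical fundamental‑theorem‑of‑$\Gamma$‑convergence argument for constrained minimizers, feeding in the liminf–inequality (\cref{thm:liminf}) and the recovery sequences (\cref{thm:limsup}), with the extra care that the prescribed end data $\image_A,\image_B$ must be matched exactly by every competitor of the discrete problem.

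First I would fix a uniform a priori bound. The trivial competitor $\image^{\ast}(t,\cdot)=\gamma_{\image_A,\image_B}(t)$, corresponding to $v\equiv 0$ and $z(t,x)=d(\image_A(x),\image_B(x))\in L^2(\domain)$, is admissible with the correct endpoints; moreover the recovery sequence it produces via \cref{thm:limsup} has $Y_K\equiv\Id$ and $\alpha_K(t,x)=t$, hence is the literal piecewise‑geodesic sampling of $\image^{\ast}$, satisfies $\image_{K,0}=\image_A$ and $\image_{K,K}=\image_B$, and (choosing $\boldsymbol{\varphi}_K=\Id$) gives $\boldsymbol{\mathcal{J}}_K(\image_K)\le\regparam\,\dist_2^2(\image_A,\image_B)$ for the constrained minimizer $\image_K$. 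With this bound and the estimates from the proof of \cref{thm:liminf} (the images $\image_K\circ Y_K$ all lie in a fixed Hölder class $A_{\frac12,L,|\det DY|}$, with $\image_K\circ Y_K(0,\cdot)=\image_A$), the sequence $\{\image_K\}$ is bounded in the Hadamard space $L^2([0,1],L^2(\domain,\Hadamard))$, so a subsequence converges weakly to some $\image$. \Cref{thm:liminf}, applied along this subsequence, then yields both $\sol(\image)\ne\emptyset$ and $\boldsymbol{\mathcal{J}}(\image)\le\liminf_{K}\boldsymbol{\mathcal{J}}_K(\image_K)$, and (from its proof) $\image_K\circ Y_K\rightharpoonup\image\circ Y$ in the weakly closed class $A_{\frac12,L,|\det DY|}$ (\cref{thm:LipClos}); a short time‑averaging argument then transfers the endpoint values, using $\image_K\circ Y_K(0,\cdot)=\image_A$ and $\image_K\circ Y_K(1,\cdot)=\image_B\circ Y_K(1,\cdot)\to\image_B\circ Y(1,\cdot)$ (\cref{lemm:stet_norm}), so that $\image(0,\cdot)=\image_A$ and $\image(1,\cdot)=\image_B$.

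For the matching upper bound I would prove $\boldsymbol{\mathcal{J}}(\image)\le\boldsymbol{\mathcal{J}}(\tilde\image)$ for every admissible $\tilde\image$ with $\tilde\image(0,\cdot)=\image_A$, $\tilde\image(1,\cdot)=\image_B$. Given such a $\tilde\image$ and $\varepsilon>0$, pick $(v,z)\in\sol(\tilde\image)$ with $\int_0^1\int_\domain L[v,v]+\regparam z^2\le\boldsymbol{\mathcal{J}}(\tilde\image)+\varepsilon$ (replacing $z$ by $|z|$ one may take $z\ge 0$). Evaluating the defining inequality \eqref{eq:ODESys2} at $t=0$, $s=1$ forces $d(\image_A(x),\image_B\circ Y(1,x))\le\int_0^1 z(s,Y(s,x))\dx s$, which is exactly hypothesis \eqref{eq:assumptionZ}; hence the special path $\image^{(v,z)}$ of the form \eqref{eq:admPath} is admissible, has the right endpoints, and satisfies $\boldsymbol{\mathcal{J}}(\image^{(v,z)})\le\int_0^1\int_\domain L[v,v]+\regparam z^2$. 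I would then run the recovery construction of \cref{thm:limsup} for $\image^{(v,z)}$, but anchored at $\image_B\circ Y_K(1,\cdot)$ instead of $\image_B\circ Y(1,\cdot)$ and with the transported time parametrization renormalized by $\int_0^1 z(s,Y_K(s,\cdot))\dx s$; the resulting discrete tuple $\hat{\boldsymbol{\image}}_K$ then satisfies $\hat\image_{K,0}=\image_A$ and $\hat\image_{K,K}=\image_B$ exactly, so $\boldsymbol{\mathcal{J}}_K(\image_K)\le\boldsymbol{J}_K(\hat{\boldsymbol{\image}}_K)$. Repeating the Taylor and Jensen estimates of \cref{thm:limsup} — the only genuinely new ingredient being the term $d(\image_B\circ Y(1,\cdot),\image_B\circ Y_K(1,\cdot))$, which tends to $0$ in $L^2(\domain)$ by \cref{lemm:stet_norm} and hence contributes only $o(1)$ — gives $\boldsymbol{J}_K(\hat{\boldsymbol{\image}}_K)\le\int_0^1\int_\domain L[v,v]+\regparam z^2+\bigO(K^{-1/2})\le\boldsymbol{\mathcal{J}}(\tilde\image)+\varepsilon+o(1)$. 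Letting $K\to\infty$ and then $\varepsilon\to 0$ yields $\boldsymbol{\mathcal{J}}(\image)\le\liminf_K\boldsymbol{\mathcal{J}}_K(\image_K)\le\limsup_K\boldsymbol{\mathcal{J}}_K(\image_K)\le\boldsymbol{\mathcal{J}}(\tilde\image)$. Taking the infimum over $\tilde\image$ identifies $\image$ as a minimizer of $\boldsymbol{\mathcal{J}}$ subject to the boundary conditions (in particular, minimizers exist), while taking $\tilde\image=\image$ gives $\boldsymbol{\mathcal{J}}_K(\image_K)\to\boldsymbol{\mathcal{J}}(\image)=\min\boldsymbol{\mathcal{J}}$; since every cluster point of $\{\image_K\}$ is a minimizer and all minimizers share the value $\min\boldsymbol{\mathcal{J}}$, the full sequence of discrete energies converges to $\min\boldsymbol{\mathcal{J}}$.

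I expect the endpoint matching in the upper‑bound step to be the main obstacle. The recovery sequence of \cref{thm:limsup} as constructed there (blending towards $\image_B\circ Y(1,\cdot)$ with a parametrization normalized by $\int_0^1 z(s,Y(s,\cdot))\dx s$) only converges to $\image_B$ at its right endpoint and does not equal it for finite $K$, whereas the constrained discrete problem demands $\image_{K,K}=\image_B$ on the nose. The obvious remedy of rerouting the last block of the discrete path towards $\image_B$ does not work: near $t=1$ the recovery path already sits a distance of order $|1-t|^{1/2}$ from $\image_B$, and bridging this gap over any fixed fraction of the $K$ steps adds a fixed positive amount (of size comparable to $\regparam$) to the discrete energy, so it does not vanish as $K\to\infty$. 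The workable fix is instead to modify the recovery construction itself so that it lands on $\image_B$ exactly — anchoring at $\image_B\circ Y_K(1,\cdot)$ and letting the transported parametrization hit $1$ at $t=1$ — and then to check that all estimates of \cref{thm:limsup} still go through, the only new point there being the $L^2(\domain)$‑smallness of $d(\image_B\circ Y(1,\cdot),\image_B\circ Y_K(1,\cdot))$, which \cref{lemm:stet_norm} supplies. A comparatively minor additional point is that \cref{thm:limsup} only provides recovery sequences for the special family \eqref{eq:admPath}; this is harmless because \eqref{eq:ODESys2} evaluated at the two endpoints is precisely \eqref{eq:assumptionZ}, so any competitor can be replaced, without increasing its energy, by a path of the special form.
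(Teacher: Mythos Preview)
Your argument follows the same $\Gamma$-convergence scheme as the paper—combine the liminf-inequality with a recovery sequence for a competitor of the special form \eqref{eq:admPath} and sandwich the energies—but you are considerably more explicit about two points the paper glosses over. First, you verify that the weak limit $\image$ inherits the boundary data $\image(0)=\image_A$, $\image(1)=\image_B$; the paper's proof does not mention this, though it is needed for $\image$ to be an admissible competitor in the continuous problem. Second, and more substantively, you correctly observe that the recovery sequence of \cref{thm:limsup}, as written, does \emph{not} satisfy $\image_{K,K}=\image_B$: at $t=1$ it lands (where $\alpha_K(1,\cdot)\ge 1$) on $\image_B\circ Y(1,\cdot)\circ X_K(1,\cdot)$ rather than on $\image_B$. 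The paper simply invokes \cref{thm:limsup} and writes the chain $\boldsymbol{\mathcal{J}}_K[\image_K]\le\boldsymbol{\mathcal{J}}_K[\tilde\image_K]$ without comment, which, as you point out, tacitly requires $\tilde\image_K$ to be admissible for the \emph{constrained} discrete problem. Your proposed remedy—re-anchoring the recovery construction at $\image_B\circ Y_K(1,\cdot)$ and renormalizing the parametrization by $\int_0^1 z(s,Y_K(s,\cdot))\dx s$ so that $\alpha_K(1,\cdot)=1$—is the natural fix; the extra error terms are indeed controlled by $\dist_2\bigl(\image_B\circ Y(1,\cdot),\image_B\circ Y_K(1,\cdot)\bigr)\to 0$ via \cref{lemm:stet_norm}, though you should retain a regularization analogous to the paper's $\beta_K$ to avoid division by zero when $\int_0^1 z(s,Y_K(s,\cdot))\dx s$ vanishes. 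In short: same overall strategy, but you have spotted and patched a genuine lacuna in the endpoint matching that the paper leaves implicit.
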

\begin{proof}
	Using a comparison argument with $v\equiv 0$ and $z(t,x) = \dist_2(\image_A(x),\image_B(x))$ we deduce that the path energy $\boldsymbol{\mathcal{J}}_K$ is
	bounded by $\overline{\mathcal{J}} = \regparam\dist_2(\image_A,\image_B)^2$.
	For optimal vectors of images~$\boldsymbol{\image}_K$ and 
	deformations~$\boldsymbol{\varphi}_K$ in the definition of $\boldsymbol{\mathcal{J}}_K$, see \eqref{eq:FuncGamma},  we apply the temporal extension construction 
	from \cref{sec:Extension}.
	In particular,
	$\boldsymbol{J}_K(\boldsymbol{\image}_K,\boldsymbol{\varphi}_K) \leq \overline{\mathcal{J}}$
	for all $K \in \N$.
	Using \eqref{eq:BoundInverse} and \cref{eq:sourcerelation}, we conclude that $z_K$ is uniformly bounded in $L^2((0,1)\times \domain)$.
	Next, \cref{rem:DiffeoVeloRem} together with \eqref{eq:velocityEstimate} and \cref{eq:growthControl} imply
	the uniform boundedness of $Y_K$, $X_K$ in $C^{0}([0,1],C^{1,\alpha}(\overline{\domain}))$.
	Incorporating \eqref{eq:ODESys2}, we obtain for $f_a(x) = a$ with $a \in \Hadamard$ that
	\[
	\dist_2(\image_K(t,\cdot),f_a)
	\leq C \bigl(\dist_2\big(\image_K(t,Y_K(t,\cdot)),\image_A\big) + \dist_2(\image_A,f_a)\bigr)
	\leq C\big(\Vert z_K \Vert_{L^2((0,1)\times \domain)} + 1 \big)\,.
	\]
	Therefore, $\{\image_K\}_{K\in\N}$ is uniformly bounded in $L^\infty([0,1],L^2(\domain,\Hadamard))$
	and a subsequence converges weakly to some $\image\in L^2([0,1],L^2(\domain,\Hadamard))$ in $L^2([0,1],L^2(\domain,\Hadamard))$.
	
	Now, we follow the usual argument and assume that there exists an image path $\tilde\image\in L^2([0,1],L^2(\domain,\Hadamard))$
	with corresponding optimal tuple $(\tilde\image,\tilde v, \tilde Y,z)$, which exists due to~\cref{prop:ExistTupel}, satisfying \eqref{eq:ODESys1} and \eqref{eq:ODESys2} such that
	\begin{equation}
	\boldsymbol{\mathcal{J}}[\tilde\image]<\boldsymbol{\mathcal{J}}[\image]\,,
	\label{eq:gammaContradiction}
	\end{equation}
	By \cref{thm:limsup}, we see that there exists a sequence
	$\{\tilde\image_K\}_{K\in\N}\subset L^2((0,1),L^2(\domain,\Hadamard))$ satisfying $\limsup_{K\rightarrow\infty}\boldsymbol{\mathcal{J}}_K [\tilde\image_K]\leq\boldsymbol{\mathcal{J}}[\tilde\image]$.
	Thus, we obtain applying \cref{thm:liminf}
	\begin{equation}\label{eq:limit}
	\boldsymbol{\mathcal{J}}[\image]\leq\liminf_{K\rightarrow\infty}\boldsymbol{\mathcal{J}}_K [\image_K]
	\leq\limsup_{K\rightarrow\infty}\boldsymbol{\mathcal{J}}_K [\tilde\image_K]\leq\boldsymbol{\mathcal{J}}[\tilde \image]\,,
	\end{equation}
	which contradicts \eqref{eq:gammaContradiction}. Hence, $\image$ minimizes the continuous path energy over all admissible image paths. Finally,
	the discrete path energies converge to the limiting path energy along a subsequence, i.e.~$\lim_{K\rightarrow\infty}\boldsymbol{\mathcal{J}}_K [\image_K]=\boldsymbol{\mathcal{J}}[\image]$,
	which again follows from \cref{eq:limit} by using $\tilde \image = \image$.
\end{proof}

\section{Conclusion}\label{sec:conclusion}
In this paper, we have introduced a novel metamorphosis functional for manifold-valued images. 
We specifically considered the case of images as maps into Hadamard manifolds.
This choice is at first motivated by applications like DTI images, which we depicted as examples here.
On the other hand, Hadamard manifolds come with the joint convexity of the distance functional. 
An important aspect of the generalized metamorphosis model for manifold-valued images is the inequality~\eqref{eq:ODESys2}, 
which replaces the defining equation for the material derivative in the standard metamorphosis model.
As it is shown here, it is in particular the joint convexity of the distance function which allows us to show this inequality as the limit inequality for our discrete approximation. 
Thus, Hadamard manifolds naturally arise in applications and appear to be the proper setup for which the existence and convergence analysis is still possible.
Indeed, we picked up a natural time discretization for this model and proved the Mosco--convergence to this novel time continuous metamorphosis model.
This in particular establishes the existence of solutions for this model, not following or using the approach by Trouv\'e and Younes in~\cite{TrYo05a}.
Also numerically, the joint convexity of the distance on Hadamard manifolds is of importance for the convergence of the alternating descent scheme presented in~\cite{NPS17}.

\section*{Acknowledgments}
We gratefully acknowledge Johannes Persch and Gabriele Steidl for many enlightening discussions and inspirations.
Furthermore, we thank the anonymous referees for making valuable comments and hints to improve the paper.
S.~Neumayer acknowledges funding by the Research Training  Group  1932, project  area  P3.
A.~Effland and M.~Rumpf acknowledge support of the Collaborative Research Center 1060 funded by 
the Deutsche Forschungsgemeinschaft (DFG, German Research Foundation)  
and the Hausdorff Center for Mathematics, funded by the Deutsche Forschungsgemeinschaft (DFG, German Research Foundation) 
under Germany's Excellence Strategy - GZ 2047/1, Projekt-ID 390685813.
A.~Effland additionally acknowledges support from the European Research Council under the Horizon 2020 program, ERC starting grant HOMOVIS, No. 640156.

\bibliography{references}
\bibliographystyle{abbrv}
\end{document}